\DeclareMathAlphabet{\mathpzc}{OT1}{pzc}{m}{it}
\newcommand{\EO}[1]{{\color{black}#1}}
\newcommand{\GC}[1]{{\color{black}#1}}
\newcommand{\norm}[1]{{\left\vert\kern-0.25ex\left\vert\kern-0.25ex\left\vert #1 
\right\vert\kern-0.25ex\right\vert\kern-0.25ex\right\vert}}
\newcommand{\TheTitle}{
Numerical discretization of a Darcy--Forchheimer problem coupled with a singular heat equation}
\newcommand{\ShortTitle}{A Darcy--Forchheimer problem coupled with a heat equation}
\newcommand{\TheAuthors}{A.~Allendes, G.~Campa\~na and E.~Ot\'arola}
\headers{\ShortTitle}{\TheAuthors}
\title{{\TheTitle}\thanks{AA is partially supported by ANID through FONDECYT project 1210729. GC is partially supported by \GC{ANID--Subdirecci\'on de Capital Humano/Doctorado Nacional/2020--21200920}. EO is partially supported by ANID through FONDECYT project 1220156.}}
\author{Alejandro Allendes\thanks{Departamento de Matem\'atica, Universidad T\'ecnica Federico Santa Mar\'ia, Valpara\'iso, Chile.
(\email{alejandro.allendes@usm.cl}, \url{http://aallendes.mat.utfsm.cl/}).}
\and 
Gilberto Campa\~na\thanks{Departamento de Ciencias, Universidad T\'ecnica Federico Santa Mar\'ia, Valpara\'iso, Chile.
(\email{gilberto.campana@usm.cl}.}
\and
Enrique Ot\'arola\thanks{Departamento de Matem\'atica, Universidad T\'ecnica Federico Santa Mar\'ia, Valpara\'iso, Chile.
(\email{enrique.otarola@usm.cl}, \url{http://eotarola.mat.utfsm.cl/}).}   
}
\date{Draft version of \today.}
\begin{document}

\maketitle

\begin{abstract}
In Lipschitz domains, we study a Darcy--Forchheimer problem coupled with a singular heat equation by a nonlinear forcing term depending on the temperature. By singular we mean that the heat source corresponds to a Dirac measure. We establish the existence of solutions for a model that allows a diffusion coefficient in the heat equation depending on the temperature. For such a model, we also propose a finite element discretization scheme and provide an a priori convergence analysis. In the case that the aforementioned diffusion coefficient is constant, we devise an a posteriori error estimator and investigate reliability and efficiency properties. We conclude by devising an adaptive loop based on the proposed error estimator and presenting numerical experiments.
\end{abstract}

\begin{keywords}
nonlinear equations, Darcy--Forchheimer problem, singular heat equation, Dirac measures, finite element approximations, convergence, a posteriori error estimates, adaptive loop.
\end{keywords}

\begin{AMS}
35R06,               
65N12,               
65N15, 		     
65N50,               
76S05.  	     
\end{AMS}


\section{Introduction}
\label{sec:intro}
Let $\Omega\subset\mathbb{R}^d$, with $d\in\{2,3\}$, be an open and bounded domain with Lipschitz boundary $\partial\Omega$. In this work\GC{,} we will be interested in the study of existence and approximation results for the temperature distribution of a fluid in a porous medium modeled by a \emph{singular} convection--diffusion equation coupled with a Darcy--Forchheimer problem. To be precise, we will study the following system of partial differential equations (PDEs):
\begin{equation}\label{eq:model}
\left\{
\begin{array}{rcll}
\nu\mathbf{u} + |\mathbf{u}|\mathbf{u}+ \nabla \mathsf{p} & = & \mathbf{f}(T) & \text{ in }\Omega,\\
\text{div}~\mathbf{u} & = & 0 & \text{ in }\Omega, \\
-\text{div}(\kappa(T)\nabla T)+\text{div}(\mathbf{u}~T) & = & \textnormal{g} & \text{ in }\Omega,
\end{array}
\right.
\end{equation}
supplemented with \GC{$\mathbf{u}\cdot \mathbf{n}=0$} on $\partial\Omega$ and $T=0$ on $\partial \Omega$. The unknowns of the system \eqref{eq:model} are the velocity field $\mathbf{u}$, the pressure $\mathsf{p}$, and the temperature $T$ of the fluid. The data of the model are the thermal diffusivity coefficient $\kappa$, the viscosity coefficient $\nu$, the external density force $\mathbf{f}$, and the external heat source $\textnormal{g}$. The coefficient $\kappa$ and the force $\mathbf{f}$ may depend nonlinearly on the temperature $T$. Finally, $\mathbf{n}$ denotes the unit outward normal vector on $\partial\Omega$ and $|\cdot|$ corresponds to the Euclidean norm. In our work we shall be particularly interested in the case that $\textnormal{g}=\delta_z$, where $\delta_z$ corresponds to the Dirac delta supported at the interior point $z\in\Omega$.

Darcy's law, $\mathbf{u} = - K \nabla p/\mu$, is a \emph{linear} relationship that describes the creeping flow of Newtonian fluids in porous media and is backed by years of experimental data \cite{MR2425154}. However, when the fluid velocity is sufficiently high, experimental evidence indicates that the relationship between the pressure gradient and Darcy velocity may be \emph{nonlinear}. On the basis of flow experimentation in \GC{sand packs}, in reference \cite{forchheimer1901wasserbewegung} Forchheimer suggested modifying Darcy's equation by incorporating a quadratic term depending on the velocity. This modification is known as the \emph{Darcy--Forchheimer} equation and finds several applications in engineering: it is used for predicting high velocity flow in porous media, especially in the vicinity of gas wells \cite{MR1103090}. It is thus no surprise that its analysis and approximation have been investigated by several authors; \EO{see \cite{MR2425154,MR2597810,MR2948707,articlepark,MR3068725,convergenceanalysisdarcysayah} for a priori error estimates for suitable mixed finite element approximations, \cite{https://doi.org/10.48550/arxiv.2202.11643} for the analysis of a posteriori error estimates, and \cite{MR4337455,10.48550/arXiv.2202.11643} for the numerical treatment of a coupled problem.}
On the other hand, the study of different models of incompressible \emph{nonisothermal fluid flow models} has become increasingly important for a variety of research areas in the fields of the natural sciences and engineering branches. This fact has been motivated by \EO{their} diverse applications in industry such as \GC{the} design of heat exchangers and chemical reactors, cooling processes, and polymer processing, to name a few. \EO{For advances regarding the numerical approximation of the so-called Boussinesq problem we refer the interested reader to the nonextensive list \cite{MR2111747,MR3845205,MR4053090,MR4265062,MR4080228,MR3454217,MR1681112,MR3232446}. Regarding the approximation of coupled problems involving the Navier--Stokes equations and a suitable temperature equation we refer the interested reader to \cite{MR1364404,MR1051838,MR2802085,MR3267163}. Within this context, we also mention \cite{ACFO:22,MR3802674,MR3523581,MR4036533,MR4041519,GMRB} for similar results when the Navier--Stokes equations are replaced by Darcy's equations.}

In this work\GC{,} we are interested in the analysis and discretization of the temperature distribution of a fluid in a porous medium modeled by a convection--diffusion equation coupled with a Darcy--Forchheimer problem. To the best of our knowledge, the only two articles that have numerically explored such a coupled problem are \cite{MR4448201} and \cite{MR4337455}. In contrast to these advances, we are particularly interested in the case that the forcing term of the stationary heat equation is singular: a Dirac measure. We begin our analysis by introducing a concept of weak solution within the spaces $\mathbf{L}^3(\Omega)$, $W^{1,\frac{3}{2}}(\Omega)\cap L_0^2(\Omega)$, and $W_0^{1,p}(\Omega)$ for the velocity, pressure, and temperature of the fluid, respectively. Here, $p$ is such that $2d/(d+1) -\epsilon< p < d/(d-1)$, for some $\epsilon >0$. Next, we show the existence of at least one solution for the coupled problem on the basis of the Leray--Schauder's fixed point theorem. We propose a finite element discretization scheme for the coupled problem: we use continuous and piecewise linear functions to the discretize the temperature and pressure and piecewise constant functions to discretize the velocity. We prove that such a numerical scheme \GC{admits at least one solution, which remains} bounded with respect to discretization, and the existence \GC{of a} subsequence that converges to a solution of the continuous problem. \GC{Under the assumption that $\kappa$ is constant,} we devise an a posteriori error estimator for the coupled problem that can be decomposed as the sum of two contributions: a contribution that accounts for the discretization of the Darcy--Forchheimer problem and another contribution that accounts for the discretization of the heat equation. \EO{We explore reliability estimates in two dimensions and \GC{local efficiency} bounds in two and three dimensions.}
We conclude our work by designing an adaptive finite element method on the basis of the devised error estimator and providing numerical experiments in convex and non-convex domains.

\GC{Our manuscript will be organized as follows}. We set notation and introduce preliminary material in section \ref{sec:notation}. In section \ref{sec:coupled_problem}, we present a weak formulation for \eqref{eq:model} and show existence of solutions. In section \ref{sec:fem}, we propose a finite element scheme and investigate convergence properties. In section \ref{sec:a_posteriori_anal}, we devise an a posteriori error estimator and study reliability and \GC{efficiency} properties. \GC{We conclude in section \ref{sec:numericalexperiments} by providing a series of numerical experiments that illustrate and go beyond the presented theory.}


\section{Notation and preliminaries} 
\label{sec:notation}
We begin by fixing notation and the setting in which we will operate.


\subsection{Notation}
We shall use standard notation for Lebesgue and Sobolev spaces. Spaces of vector valued functions and \GC{their} elements will be indicated with boldface. Since we will deal with \GC{an incompressible fluid}, we must indicate a way to make the pressure unique. To do so, we denote by $L^2_0(\Omega)$ the space of functions in $L^2(\Omega)$ that have \GC{zero averages.}

If $\mathcal{W}$ and $\mathcal{Z}$ are Banach function spaces, we write $\mathcal{W} \hookrightarrow\mathcal{Z}$ to denote that $\mathcal{W}$ is continuously embedded in $\mathcal{Z}$. We denote by $\mathcal{W}'$ and $\|\cdot\|_{\mathcal{W}}$ the dual and the norm of $\mathcal{W}$, respectively. Given $\mathfrak{p} \in(1, \infty)$, we denote by $\mathfrak{p}'$ its H\"older conjugate, i.e., the real number such that $1/\mathfrak{p} + 1/\mathfrak{p}' = 1$. The relation $a\lesssim b$ indicates that $a\leq Cb$, with a constant $C$ that depends neither on $a$, $b$ nor on the discretization parameters. The value of $C$ might change at each occurrence.  


\subsection{A Darcy--Forchheimer model}
\label{sec:a_Darcy_Forchheimer}

In this section, \GC{we briefly present some results regarding the well-posedness of the Darcy--Forchheimer problem
\begin{equation}\label{eq:DF_problem}
\nu\mathbf{u} +|\mathbf{u}|\mathbf{u}+ \nabla \mathsf{p}  =  \mathbf{f}\text{ in }\Omega, \quad\text{div}~\mathbf{u}  =  0  \text{ in }\Omega, \quad\mathbf{u}\cdot\mathbf{n} =  \mathbf{0}  \text{ on }\partial\Omega,
\end{equation}
where $\mathbf{f}$ denotes the external density force and $\nu$ is the viscosity coefficient. We say that $(\mathbf{u},\mathsf{p})\in \mathbf{X}\times M$ is a weak solution to problem \eqref{eq:DF_problem} if}
\begin{equation}\label{eq:forchheimer_darcy_problem}
\begin{array}{rcll}
\displaystyle\int_\Omega(\GC{\nu}\mathbf{u}+|\mathbf{u}|\mathbf{u}+\nabla\mathsf{p})\cdot\mathbf{v}\mathrm{d}\mathbf{x}& = & 	\displaystyle\int_\Omega\mathbf{f}\cdot \mathbf{v}\mathrm{d}\mathbf{x} \quad &\forall \mathbf{v}\in \mathbf{X},\\
\displaystyle\int_\Omega \nabla\mathsf{q}\cdot\mathbf{u}\mathrm{d}\mathbf{x} & = & 0 \quad &\forall \mathsf{q}\in M.
\end{array}
\end{equation}
Here, 
$
\mathbf{X}:=\mathbf{L}^3(\Omega),
$
$
M:=W^{1,\frac{3}{2}}(\Omega)\cap L_0^2(\Omega),
$
$\mathbf{f}$ belongs to $\mathbf{L}^{\frac{3}{2}}(\Omega)$\GC{,} and $\GC{\nu} \in C^{0,1}(\bar \Omega)$ is a strictly positive and bounded function which satisfies $\GC{\nu}_{-}\leq \GC{\nu}(\mathbf{x})\leq \GC{\nu}_{+}$ for every $\mathbf{x}\in \Omega$, \GC{where} $0 < \GC{\nu}_{-} \leq \GC{\nu}_{+}$. We immediately comment that, owing to our assumptions on data and definition of weak solution, all terms in \eqref{eq:forchheimer_darcy_problem} are meaningful. 

\GC{In what follows}, we will repeatedly make use of the fact that, on the spaces $\mathbf{X}$ and $M$, the following inf--sup condition holds \cite[Lemma 1]{MR2425154}:
\begin{equation}\label{eq:infsup}
\inf_{\mathsf{q}\in M} \sup_{\mathbf{v}\in \mathbf{X}}\dfrac{\int_\Omega \mathbf{v}\cdot \nabla \mathsf{q}\mathrm{d}\mathbf{x}}{\|\mathbf{v}\|_{\mathbf{L}^3(\Omega)}\|\nabla\mathsf{q}\|_{\mathbf{L}^{\frac{3}{2}}(\Omega)}}=1.
\end{equation}
This follows immediately from the dual representation of the norm $\| \cdot \|_{\mathbf{L}^{\frac{3}{2}}(\Omega)}$ \cite{MR2425154}.

To study problem \eqref{eq:forchheimer_darcy_problem}, it is convenient to introduce the map 
\begin{eqnarray}\label{eq:A_map_forchheimer}
\mathcal{A}:\mathbf{L}^3(\Omega)\to\mathbf{L}^{\frac{3}{2}}(\Omega),
\qquad
\mathbf{v} \mapsto  \mathcal{A}(\mathbf{v}):=\GC{\nu}\mathbf{v}+|\mathbf{v}|\mathbf{v}.
\end{eqnarray}
It is immediate that $\mathcal{A}$ maps $\mathbf{L}^3(\Omega)$ into $\mathbf{L}^{\frac{3}{2}}(\Omega)$. In addition, $\mathcal{A}$ is bounded on all bounded \GC{subsets} of $\mathbf{L}^3(\Omega)$ and $\mathcal{A}$ is monotone, coercive, and hemicontinuous in $\mathbf{L}^3(\Omega)$.

\begin{proposition}[Properties of $\mathcal{A}$]
$\mathcal{A}$ satisfies the following properties.
\begin{itemize} \item [(i)]\label{i} \textit{$\mathcal{A}$ is bounded on all bounded \GC{subsets} of $\mathbf{L}^3(\Omega)$:} If $\mathbf{v}\in \mathbf{L}^3(\Omega)$, then
 \begin{equation}\label{eq:prop1_A}
\|\mathcal{A}(\mathbf{v})\|_{\mathbf{L}^{\frac{3}{2}}(\Omega)}\leq \GC{\nu}_{+}\|\mathbf{v}\|_{\mathbf{L}^{\frac{3}{2}}(\Omega)}+\|\mathbf{v}\|_{\mathbf{L}^{3}(\Omega)}^2.
\end{equation}
\item [(ii)]\label{ii} \textit{$\mathcal{A}$ is monotone from $\mathbf{L}^3(\Omega)$ into $\mathbf{L}^{\frac{3}{2}}(\Omega)$}: If $\mathbf{v},\mathbf{w} \in \mathbf{L}^3(\Omega)$, then
\begin{equation}
\label{eq:prop2_A}
	\int_\Omega (\mathcal{A}(\mathbf{v})-\mathcal{A}(\mathbf{w}))\cdot (\mathbf{v}-\mathbf{w}) \mathrm{d}\mathbf{x}
	\geq \gamma_d \|\mathbf{v}-\mathbf{w}\|^3_{\mathbf{L}^3(\Omega)}
	+
	\GC{\nu}_{-}\|\mathbf{v}-\mathbf{w}\|^2_{\mathbf{L}^2(\Omega)},
\end{equation}
where $\gamma_d$ is a strictly positive constant that depends on the dimension $d$.
\item [(iii)]\label{iii} \textit{$\mathcal{A}$ is coercive in $\mathbf{L}^3(\Omega)$:}
\begin{equation}\label{eq:prop3_A}
\lim_{\|\mathbf{v}\|_{\mathbf{L}^3(\Omega)}\to+\infty}\frac{1}{\|\mathbf{v}\|_{\mathbf{L}^3(\Omega)}}\int_\Omega \mathcal{A}(\mathbf{v})\cdot \mathbf{v}\mathrm{d}\mathbf{x}=+\infty.
\end{equation}
\item [(iv)]\label{iv} \textit{$\mathcal{A}$ is hemicontinuous in $\mathbf{L}^3(\Omega)$:} Let $\mathbf{v},\mathbf{w}\in \mathbf{L}^3(\Omega)$. Then, the mapping
\begin{eqnarray}\label{eq:prop4_A}
t\to \int_\Omega \mathcal{A}(\mathbf{v}+t\mathbf{w})\cdot\mathbf{w}\mathrm{d}\mathbf{x},
\end{eqnarray}
is continuous from $\mathbb{R}$ into $\mathbb{R}$.
\item[(v)] Almost everywhere in $\Omega$, we have
\begin{eqnarray}\label{eq:prop5_A}
|\mathcal{A}(\mathbf{v})-\mathcal{A}(\mathbf{w})|\leq |\mathbf{v}-\mathbf{w}|(\GC{\nu}_{+} + |\mathbf{v}|+|\mathbf{w}|).
\end{eqnarray}
\end{itemize}
\label{eq:proposition_A}
\end{proposition}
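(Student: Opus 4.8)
The proof of each item reduces to an elementary pointwise manipulation of the map $\mathbf{v}\mapsto\nu\mathbf{v}+|\mathbf{v}|\mathbf{v}$ followed by an integration over $\Omega$, so the plan is to treat the five statements in turn. For (i) I would apply the triangle inequality in $\mathbf{L}^{\frac{3}{2}}(\Omega)$ to split $\|\mathcal{A}(\mathbf{v})\|_{\mathbf{L}^{\frac{3}{2}}(\Omega)}$ into $\|\nu\mathbf{v}\|_{\mathbf{L}^{\frac{3}{2}}(\Omega)}+\||\mathbf{v}|\mathbf{v}\|_{\mathbf{L}^{\frac{3}{2}}(\Omega)}$, bound the first summand by $\nu_{+}\|\mathbf{v}\|_{\mathbf{L}^{\frac{3}{2}}(\Omega)}$ using $\nu\le\nu_{+}$, and observe that $\||\mathbf{v}|\mathbf{v}\|_{\mathbf{L}^{\frac{3}{2}}(\Omega)}^{3/2}=\int_\Omega|\mathbf{v}|^{3}\,\mathrm{d}\mathbf{x}=\|\mathbf{v}\|_{\mathbf{L}^{3}(\Omega)}^{3}$, hence $\||\mathbf{v}|\mathbf{v}\|_{\mathbf{L}^{\frac{3}{2}}(\Omega)}=\|\mathbf{v}\|_{\mathbf{L}^{3}(\Omega)}^{2}$; this same computation also makes transparent that $\mathcal{A}$ does map $\mathbf{L}^{3}(\Omega)$ into $\mathbf{L}^{\frac{3}{2}}(\Omega)$ and is bounded on bounded subsets.

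For (ii) the one nontrivial ingredient is the pointwise vector inequality $(|\mathbf{a}|\mathbf{a}-|\mathbf{b}|\mathbf{b})\cdot(\mathbf{a}-\mathbf{b})\ge\tfrac12(|\mathbf{a}|+|\mathbf{b}|)|\mathbf{a}-\mathbf{b}|^{2}$, valid for all $\mathbf{a},\mathbf{b}\in\mathbb{R}^{d}$; expanding both sides, this is equivalent to $\tfrac12(|\mathbf{a}|-|\mathbf{b}|)^{2}(|\mathbf{a}|+|\mathbf{b}|)\ge0$, which is obvious. Combining it with the triangle inequality $|\mathbf{a}|+|\mathbf{b}|\ge|\mathbf{a}-\mathbf{b}|$ produces $(|\mathbf{a}|\mathbf{a}-|\mathbf{b}|\mathbf{b})\cdot(\mathbf{a}-\mathbf{b})\ge\gamma_d|\mathbf{a}-\mathbf{b}|^{3}$ for a suitable $\gamma_d>0$. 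Applying this with $\mathbf{a}=\mathbf{v}(\mathbf{x})$, $\mathbf{b}=\mathbf{w}(\mathbf{x})$, integrating over $\Omega$, and adding $\int_\Omega\nu|\mathbf{v}-\mathbf{w}|^{2}\,\mathrm{d}\mathbf{x}\ge\nu_{-}\|\mathbf{v}-\mathbf{w}\|_{\mathbf{L}^{2}(\Omega)}^{2}$ gives \eqref{eq:prop2_A} once we recognise $\int_\Omega|\mathbf{v}-\mathbf{w}|^{3}\,\mathrm{d}\mathbf{x}=\|\mathbf{v}-\mathbf{w}\|_{\mathbf{L}^{3}(\Omega)}^{3}$. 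Item (iii) is then immediate: since $\int_\Omega\nu|\mathbf{v}|^{2}\,\mathrm{d}\mathbf{x}\ge0$, one has $\|\mathbf{v}\|_{\mathbf{L}^{3}(\Omega)}^{-1}\int_\Omega\mathcal{A}(\mathbf{v})\cdot\mathbf{v}\,\mathrm{d}\mathbf{x}\ge\|\mathbf{v}\|_{\mathbf{L}^{3}(\Omega)}^{-1}\int_\Omega|\mathbf{v}|^{3}\,\mathrm{d}\mathbf{x}=\|\mathbf{v}\|_{\mathbf{L}^{3}(\Omega)}^{2}\to+\infty$.

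For (iv) I would again split the integrand: the part coming from $\nu(\mathbf{v}+t\mathbf{w})\cdot\mathbf{w}$ is affine in $t$, hence continuous, while for $\int_\Omega|\mathbf{v}+t\mathbf{w}|(\mathbf{v}+t\mathbf{w})\cdot\mathbf{w}\,\mathrm{d}\mathbf{x}$ I would fix $t_0\in\mathbb{R}$, let $t\to t_0$ with $|t|\le R$ for some $R>|t_0|$, use the continuity of $\mathbf{y}\mapsto|\mathbf{y}|\mathbf{y}$ to get pointwise a.e.\ convergence of the integrand, and dominate it by $(|\mathbf{v}|+R|\mathbf{w}|)^{2}|\mathbf{w}|$, which lies in $L^{1}(\Omega)$ by H\"older's inequality with exponents $\frac32$ and $3$ since $\mathbf{v},\mathbf{w}\in\mathbf{L}^{3}(\Omega)$; Lebesgue's dominated convergence theorem then yields continuity at $t_0$, so (iv) follows. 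Finally, for (v) I would write $|\mathbf{v}|\mathbf{v}-|\mathbf{w}|\mathbf{w}=|\mathbf{v}|(\mathbf{v}-\mathbf{w})+(|\mathbf{v}|-|\mathbf{w}|)\mathbf{w}$, invoke $\bigl||\mathbf{v}|-|\mathbf{w}|\bigr|\le|\mathbf{v}-\mathbf{w}|$ to obtain $\bigl||\mathbf{v}|\mathbf{v}-|\mathbf{w}|\mathbf{w}\bigr|\le|\mathbf{v}-\mathbf{w}|(|\mathbf{v}|+|\mathbf{w}|)$, and add $|\nu(\mathbf{v}-\mathbf{w})|\le\nu_{+}|\mathbf{v}-\mathbf{w}|$. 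No step presents a genuine obstacle; the only one needing a small idea is the pointwise monotonicity inequality in (ii), and the only one needing a little care is the choice of the integrable majorant in (iv).
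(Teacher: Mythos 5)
Your proof is correct and follows essentially the same route as the paper, which simply declares (i) and (v) trivial and cites external references for (ii)--(iv); your pointwise inequality $(|\mathbf{a}|\mathbf{a}-|\mathbf{b}|\mathbf{b})\cdot(\mathbf{a}-\mathbf{b})\ge\tfrac12(|\mathbf{a}|+|\mathbf{b}|)|\mathbf{a}-\mathbf{b}|^{2}$ is precisely the $p=3$ case of the inequality the paper invokes from Girault--Raviart, and your dominated-convergence argument for hemicontinuity matches the cited proof. The only difference is that you supply self-contained verifications (and in fact obtain the dimension-independent constant $\gamma_d=\tfrac12$), which is a strengthening rather than a deviation.
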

\begin{proof}
The proofs of \eqref{eq:prop1_A} and \eqref{eq:prop5_A} are trivial. To obtain \eqref{eq:prop2_A}, we utilize the definition of $\mathcal{A}$ to write $(\mathcal{A}(\mathbf{v}) - \mathcal{A}(\mathbf{w}))\cdot(\mathbf{v}-\mathbf{w})$ and invoke the inequality of \cite[Lemma 4.4, Chapter I]{MR1230384} with $p=3$. Property \eqref{eq:prop3_A} follows from a direct calculation; see also the proof \cite[Theorem 2]{MR2425154}. Finally, the proof of the continuity of the map described in \eqref{eq:prop4_A} can be found in \cite[Proposition 3]{MR2425154}.
\end{proof}

Let us now introduce the functional spaces
\[
\mathbf{H}:= 
\left \{\mathbf{v}\in \mathbf{L}^3(\Omega):\textnormal{div}(\mathbf{v})\in L^{\frac{3d}{d+3}}(\Omega) \right\},
\]
endowed with the norm $\| \mathbf{v} \|_{\mathbf{H}}:= \| \mathbf{v} \|_{\mathbf{L}^3(\Omega)} + \| \mathrm{div}(\mathbf{v}) \|_\GC{{L^{\frac{3d}{d+3}}(\Omega)}}$, 
$\mathbf{H}_0:= \overline{\mathbf{C}_0^{\infty}(\Omega)}^{\mathbf{H}}$, and
\[
\mathbf{V}:= \left\{\mathbf{v} \in \mathbf{H}_0: \textrm{div}(\mathbf{v}) = 0 \textrm{ in } \Omega 
\right\}.
\]
The definition of $\mathbf{H}$ is motivated by the Sobolev embedding $W^{1,\frac{3}{2}}(\Omega) \hookrightarrow L^s(\Omega)$, which holds for every $s \leq s_{\star}:=3d/(2d-3)$. Observe that $s_{\star}'=3d/(d+3)$. The space $\mathbf{H}$ is a reflexive Banach space when equipped with 
$\| \cdot \|_{\mathbf{H}}$. In addition, 
$\mathbf{C}^{\infty}(\bar \Omega)$
is dense in $\mathbf{H}$.
With such a density result at hand, the following Green's formula can be derived \cite[formula (2.5)]{MR2425154}:
$
\int_{\Omega} \mathbf{v} \cdot \nabla q \mathrm{d}\mathbf{x} 
= 
-\int_{\Omega} q \mathrm{div} \mathbf{v} \mathrm{d}\mathbf{x}
+
\langle q, \mathbf{v}\cdot \mathbf{n}\rangle_{\partial \Omega}
$
for all $q \in M$ and $\mathbf{v} \in \mathbf{H}$. Finally, we mention the characterization $\mathbf{H}_0:= \left\{\mathbf{v} \in \mathbf{H}: \mathbf{v}\cdot\mathbf{n} = 0 \textrm{ on } \partial \Omega \right\}.$

Having introduced the space $\mathbf{V}$, the following result follows from \cite[Proposition 2]{MR2425154}: Problem \eqref{eq:forchheimer_darcy_problem} is equivalent to the following formulation: Find $\mathbf{u} \in \mathbf{V}$ such that
\begin{equation}\label{eq:Darcy_reduced}
\begin{array}{rcl}
\displaystyle\int_\Omega\mathcal{A}(\mathbf{u})\cdot\mathbf{v}\mathrm{d}\mathbf{x}& = & 	\displaystyle\int_\Omega\mathbf{f}\cdot \mathbf{v}\mathrm{d}\mathbf{x}\quad\forall\mathbf{v}\in\mathbf{V}.
\end{array}
\end{equation}

The well-posedness of problem \eqref{eq:Darcy_reduced} is as follows.

\begin{theorem}[well-posedness of a Darcy--Forchheimer problem]\label{thm:wp_darcy_forchheimer} If $\mathbf{f}\in \mathbf{L}^{\frac{3}{2}}(\Omega)$, then problem \eqref{eq:forchheimer_darcy_problem} admits a unique solution $(\mathbf{u},\mathsf{p})\in \mathbf{X}\times M$. In addition, we have
\begin{align}
\label{eq:bounds_forchheimer_velocity}
\|\mathbf{u}\|_{\mathbf{L}^3(\Omega)}^2& \leq \|\mathbf{f}\|_{\mathbf{L}^{\frac{3}{2}}(\Omega)},\\
\label{eq:bounds_forchheimer_pressure}
\|\nabla \mathsf{p}\|_{\mathbf{L}^{\frac{3}{2}}(\Omega)}&\leq \|\mathbf{f}\|_{\mathbf{L}^{\frac{3}{2}}(\Omega)}+ \GC{\nu}_{+}\|\mathbf{u}\|_{\mathbf{L}^{\frac{3}{2}}(\Omega)}+\|\mathbf{u}\|_{\mathbf{L}^{3}(\Omega)}^2,
\end{align}
with hidden constants that are independent of the data $\mathbf{f}$ and $\GC{\nu}$ and the solution $(\mathbf{u},\mathsf{p})$.
\end{theorem}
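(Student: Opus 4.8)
The plan is to exploit the equivalence between \eqref{eq:forchheimer_darcy_problem} and the reduced formulation \eqref{eq:Darcy_reduced} posed on the divergence--free space $\mathbf{V}$: first obtain the velocity $\mathbf{u}$ via the theory of monotone operators, then recover the pressure $\mathsf{p}$ from the inf--sup condition \eqref{eq:infsup}, and finally read off the a priori bounds by suitable testing.

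\emph{Step 1 (velocity).} Since $\mathbf{V}$ is a closed subspace of the reflexive, separable Banach space $\mathbf{L}^3(\Omega)$, it is itself reflexive and separable. By Proposition \ref{eq:proposition_A}, the operator $\mathcal{A}$, regarded as a map from $\mathbf{V}$ into $\mathbf{V}'$ through $\langle\mathcal{A}\mathbf{w},\mathbf{v}\rangle:=\int_\Omega\mathcal{A}(\mathbf{w})\cdot\mathbf{v}\,\mathrm{d}\mathbf{x}$, is bounded on bounded sets by \eqref{eq:prop1_A}, monotone by \eqref{eq:prop2_A}, coercive by \eqref{eq:prop3_A}, and hemicontinuous by \eqref{eq:prop4_A}; moreover $\mathbf{v}\mapsto\int_\Omega\mathbf{f}\cdot\mathbf{v}\,\mathrm{d}\mathbf{x}$ lies in $\mathbf{V}'$ because $\mathbf{f}\in\mathbf{L}^{\frac{3}{2}}(\Omega)=(\mathbf{L}^3(\Omega))'$. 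The Browder--Minty theorem then yields a solution $\mathbf{u}\in\mathbf{V}$ of \eqref{eq:Darcy_reduced}. Uniqueness follows from the strict monotonicity in \eqref{eq:prop2_A}: if $\mathbf{u}_1,\mathbf{u}_2\in\mathbf{V}$ both solve \eqref{eq:Darcy_reduced}, subtracting the equations and testing with $\mathbf{v}=\mathbf{u}_1-\mathbf{u}_2$ gives $\gamma_d\|\mathbf{u}_1-\mathbf{u}_2\|_{\mathbf{L}^3(\Omega)}^3+\nu_{-}\|\mathbf{u}_1-\mathbf{u}_2\|_{\mathbf{L}^2(\Omega)}^2\le 0$.

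\emph{Step 2 (pressure).} The functional $\ell:\mathbf{v}\mapsto\int_\Omega\mathbf{f}\cdot\mathbf{v}\,\mathrm{d}\mathbf{x}-\int_\Omega\mathcal{A}(\mathbf{u})\cdot\mathbf{v}\,\mathrm{d}\mathbf{x}$ belongs to $\mathbf{X}'$ (indeed $\ell=\mathbf{f}-\mathcal{A}(\mathbf{u})\in\mathbf{L}^{\frac{3}{2}}(\Omega)$) and, by \eqref{eq:Darcy_reduced}, it annihilates $\mathbf{V}$. The inf--sup condition \eqref{eq:infsup}, combined with a generalized Poincaré inequality on $M$, states that the gradient $\mathsf{q}\mapsto\nabla\mathsf{q}$ is an isomorphism from $M$ onto a closed subspace of $\mathbf{X}'$; together with the Green's formula recalled before \eqref{eq:Darcy_reduced}, which correctly identifies the annihilator of $\{\mathbf{v}\in\mathbf{H}_0:\mathrm{div}(\mathbf{v})=0\}$ with $\{\nabla\mathsf{q}:\mathsf{q}\in M\}$, this is precisely a De Rham--type argument. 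Hence there is a unique $\mathsf{p}\in M$ with $\ell(\mathbf{v})=-\int_\Omega\nabla\mathsf{p}\cdot\mathbf{v}\,\mathrm{d}\mathbf{x}$ for all $\mathbf{v}\in\mathbf{X}$, i.e. the first equation of \eqref{eq:forchheimer_darcy_problem} holds; the second holds since $\mathbf{u}\in\mathbf{V}$. This gives existence and uniqueness of $(\mathbf{u},\mathsf{p})\in\mathbf{X}\times M$.

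\emph{Step 3 (bounds) and main obstacle.} Testing \eqref{eq:Darcy_reduced} with $\mathbf{v}=\mathbf{u}$ and using the definition of $\mathcal{A}$ together with Hölder's inequality gives $\int_\Omega\nu|\mathbf{u}|^2\,\mathrm{d}\mathbf{x}+\|\mathbf{u}\|_{\mathbf{L}^3(\Omega)}^3=\int_\Omega\mathbf{f}\cdot\mathbf{u}\,\mathrm{d}\mathbf{x}\le\|\mathbf{f}\|_{\mathbf{L}^{\frac{3}{2}}(\Omega)}\|\mathbf{u}\|_{\mathbf{L}^3(\Omega)}$; discarding the nonnegative first term and dividing by $\|\mathbf{u}\|_{\mathbf{L}^3(\Omega)}$ yields \eqref{eq:bounds_forchheimer_velocity}. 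Since density of $\mathbf{C}_0^\infty(\Omega)$ in $\mathbf{X}$ shows the first equation of \eqref{eq:forchheimer_darcy_problem} holds pointwise a.e., $\nabla\mathsf{p}=\mathbf{f}-\mathcal{A}(\mathbf{u})$ in $\mathbf{L}^{\frac{3}{2}}(\Omega)$, so the triangle inequality and \eqref{eq:prop1_A} give \eqref{eq:bounds_forchheimer_pressure}. The only delicate point is Step 2: establishing, in the non-Hilbertian pair $(\mathbf{L}^3(\Omega),\,W^{1,\frac{3}{2}}(\Omega)\cap L^2_0(\Omega))$ and compatibly with the boundary condition $\mathbf{u}\cdot\mathbf{n}=0$, that the annihilator of $\mathbf{V}$ coincides with the range of the gradient and that this range is closed; the bound \eqref{eq:infsup} is exactly the quantitative statement needed, so the argument ultimately reduces to invoking it together with the Green's formula, while everything else is routine.
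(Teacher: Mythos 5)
Your proof is correct and follows essentially the same route as the paper: reduce to the divergence--free formulation \eqref{eq:Darcy_reduced}, invoke monotone operator theory (Browder--Minty) for existence of $\mathbf{u}$, use the strict monotonicity \eqref{eq:prop2_A} for uniqueness, recover $\mathsf{p}$ from the inf--sup condition \eqref{eq:infsup} via the equivalence of the two formulations, and obtain the a priori bounds by testing with $\mathbf{v}=\mathbf{u}$ and reading off $\nabla\mathsf{p}=\mathbf{f}-\mathcal{A}(\mathbf{u})$ together with \eqref{eq:prop1_A}. The only blemish is a sign slip in Step 2 (one should have $\ell(\mathbf{v})=+\int_\Omega\nabla\mathsf{p}\cdot\mathbf{v}\,\mathrm{d}\mathbf{x}$, not $-$), which is harmless since it amounts to replacing $\mathsf{p}$ by $-\mathsf{p}$.
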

\begin{proof}
In view of the equivalence of problems \eqref{eq:forchheimer_darcy_problem} and \eqref{eq:Darcy_reduced}, we analyze problem \eqref{eq:Darcy_reduced}. To accomplish this task, we notice that Proposition \ref{eq:proposition_A} guarantees that $\mathcal{A}$ is bounded on all bounded \GC{subsets} of $\mathbf{L}^3(\Omega)$ and monotone, coercive, and hemicontinuous in $\mathbf{L}^3(\Omega)$. We can thus invoke the standard theory for monotone operators \cite[Theorem 2.18]{MR3014456}, \cite[Theorem 9.14-1]{MR3136903} to conclude the existence of a solution $\mathbf{u} \in \mathbf{L}^{3}(\Omega)$. Uniqueness follows from \GC{the} item (ii) in Proposition \ref{eq:proposition_A}. To obtain \eqref{eq:bounds_forchheimer_velocity}, it suffices to set $\mathbf{v} = \mathbf{u}$ in the first equation of problem \eqref{eq:forchheimer_darcy_problem}. The remaining estimate \eqref{eq:bounds_forchheimer_pressure} follows from the first equation of problem \eqref{eq:forchheimer_darcy_problem} and the inf--sup condition \eqref{eq:infsup}.
\end{proof}


\section{The coupled problem}
\label{sec:coupled_problem} 
In this section, we analyze the existence of weak solutions for problem \eqref{eq:model}.


\subsection{Main assumptions}\label{sec:main_assump}

We begin our studies by introducing the set of assumptions under which we will operate.

\begin{itemize}
\item \emph{External force:} The external density force is as follows \cite[Assumption 2.1]{MR4337455}:
\begin{equation}
 \mathbf{f}(\mathbf{x},s):=\mathbf{f}_0(\mathbf{x})+\mathbf{f}_1(s),
 \quad \GC{\mathbf{x}} \in \Omega,~ s \in \mathbb{R},
 \label{eq:external_density_force}
\end{equation}
where $\mathbf{f}_0\in \mathbf{L}^{\frac{3}{2}}(\Omega)$ and $\mathbf{f}_1$ is a Lipschitz--continuous function with constant $C_{\mathbf{f}}$ \GC{which satisfies $\mathbf{f}_1(0)=\GC{\mathbf{0}}$. In particular,} $|\mathbf{f}_1(s)|\leq C_{\mathbf{f}} |s|$ for every $s \in\mathbb{R}$.

\item \emph{Viscosity:} $\nu$ is a Lipschitz--continuous function with constant $C_{\mathcal{L}}$ and is such that 
$
0<\nu_{-}\leq \nu(\mathbf{x})\leq \nu_{+} 
$
for every $\mathbf{x} \in \Omega$. 

\item \emph{Diffusivity:} $\kappa \in C^{0,1}(\mathbb{R})$ is such that
$
0<\kappa_{-}\leq \kappa(s)\leq \kappa_{+} 
$
for every $s \in \mathbb{R}$.
\end{itemize}


\subsection{Weak formulation}
\label{sec:weak_solutions}

\GC{We introduce the following notion of weak solution.}


\begin{definition}[weak solution]
Let $z\in\Omega$ and $p<d/(d-1)$. We say that $(\mathbf{u},\mathsf{p},T)\in \mathbf{X}\times M\times Y$ is a weak solution to \eqref{eq:model} if
\begin{equation}\label{eq:modelweak}
\left\{
\begin{array}{rcll}
\displaystyle\int_\Omega\left(\nu\mathbf{u}\!+\!|\mathbf{u}|\mathbf{u}\!+\!\nabla\mathsf{p}\right)\cdot \mathbf{v}\mathrm{d}\mathbf{x}& = & 	\displaystyle\int_\Omega\mathbf{f}(T)\cdot \mathbf{v}\mathrm{d}\mathbf{x}  &\forall \mathbf{v}\in \mathbf{X},\\
\displaystyle\int_\Omega \nabla\mathsf{q}\cdot\mathbf{u}\mathrm{d}\mathbf{x} & = & 0  &\forall \mathsf{q}\in M,\\
\displaystyle\int_\Omega(\kappa(T)\nabla T\cdot \nabla S\!-\!T \mathbf{u}\cdot\nabla S)\mathrm{d}\mathbf{x} & = & \langle \delta_{z}, S\rangle  &\forall S\in W_0^{1,p'}(\Omega).
\end{array}
\right.
\end{equation}
Here, $\langle\cdot,\cdot\rangle$ denotes the duality pairing between $W_0^{1,p'}(\Omega)$ and $W^{-1,p}(\Omega)=(W_0^{1,p'}(\Omega))'$ and
$
 Y := W_0^{1,p}(\Omega).
$
\end{definition}

\EO{The following comments are now in order. The asymptotic behavior of solutions $\chi$ to second order elliptic problems with homogeneous Dirichlet boundary conditions and $\delta_z$ as a forcing term is dictated by $|\nabla \chi(\mathbf{x})| \approx |\mathbf{x} - z|^{-1}$ \EO{\cite[Theorem 3.3]{MR0223740}}. On the basis of a simple computation, this asymptotic behavior motivates us to seek for a temperature distribution within the space $W_0^{1,p}(\Omega)$ for $p<d/(d-1)$. On the other hand, we notice that,}
in view of the assumptions imposed on the problem data and the definition of weak solution, all terms in problem \eqref{eq:modelweak} are well-defined. In particular, the convective term can be controlled as follows:
\begin{equation}
\begin{aligned}
\label{eq:termconv}
\left|\int_{\Omega}T\mathbf{u}\cdot\nabla S d\textbf{x}\right|
&\leq \|\mathbf{u}\|_{\mathbf{L}^3(\Omega)}\|T\|_{L^{\frac{3p}{3-p}}(\Omega)}\|\nabla S\|_{\mathbf{L}^{p'}(\Omega)}\\
&\leq C_{e}
\|\mathbf{u}\|_{\mathbf{L}^3(\Omega)}\| \nabla T\|_{\mathbf{L}^{p}(\Omega)}\| \nabla S\|_{\mathbf{L}^{p'}(\Omega)},
\end{aligned}
\end{equation}
upon utilizing that $\mathbf{L}^{\frac{2p}{2-p}}(\Omega)\subset \mathbf{L}^{\frac{3p}{3-p}}(\Omega)$ and that $W_0^{1,p}(\Omega)\hookrightarrow L^{\frac{dp}{d-p}}(\Omega)$ \cite[Theorem 4.12, Case \textbf{C}]{MR2424078}, with $C_{e}$ being the best constant in the second embedding.

\subsection{A problem for the single variable $T$}
\EO{We follow \cite[Section 2.2]{MR3802674} and write}
\eqref{eq:modelweak} as a problem for the single variable $T$: For a given $T$, the first two equations in \eqref{eq:modelweak} correspond to a Darcy--Forchheimer system that, in view of Theorem \ref{thm:wp_darcy_forchheimer}, admits a unique solution $(\mathbf{u},\mathsf{p})\in \mathbf{X}\times M$. Observe that $\mathbf{f}(T) \in \mathbf{L}^{3/2}(\Omega)$. The variables $\mathbf{u}$ and $\mathsf{p}$ can thus be seen as functions depending on $T$, $(\mathbf{u},\mathsf{p})=(\mathbf{u}(T),\mathsf{p}(T))$, and  \eqref{eq:modelweak} is equivalent to the reduced formulation \cite[Section 2.2]{MR4337455}: Find $T\in W_0^{1,p}(\Omega)$ \EO{such that}
\begin{equation}\label{eq:heat_decoupled}
\int_\Omega(\kappa(T)\nabla T\cdot \nabla S - T \mathbf{u}(T)\cdot\nabla S)\mathrm{d}\mathbf{x}  =  \langle \delta_{z}, S\rangle \qquad \forall S\in W_0^{1,p'}(\Omega),
\end{equation}
where $p<d/(d-1)$ and $\mathbf{u}(T) \in \mathbf{X}$ denotes the velocity component of the pair $(\mathbf{u}(T),\mathsf{p}(T))$ that solves the problem: Find $(\mathbf{u}(T),\mathsf{p}(T))\in\mathbf{X}\times M$ \EO{such that}
\begin{equation}\label{eq:Darcy_decoupled}
\displaystyle\int_\Omega\left(\nu\mathbf{u}(T) +|\mathbf{u}(T)|\mathbf{u}(T)+\nabla\mathsf{p}(T)\right)\cdot \mathbf{v}\mathrm{d}\mathbf{x} =  \displaystyle\int_\Omega\mathbf{f}(T)\cdot \mathbf{v}\mathrm{d}\mathbf{x},
\quad
\displaystyle\int_\Omega \nabla\mathsf{q}\cdot\mathbf{u}(T)\mathrm{d}\mathbf{x}  =  0,
\end{equation}
for all $(\mathbf{v},\mathsf{q}) \in \mathbf{X} \times M$. Since $\mathbf{f}$ and $\nu$ satisfy the assumptions stated in section \ref{sec:main_assump}, \eqref{eq:Darcy_decoupled} admits a unique solution $(\mathbf{u}(T),\mathsf{p}(T))\in\mathbf{X}\times M$ which satisfies the bounds
\begin{align}
 \label{eq:bounds_forchheimercoupled_velocity}
 \|\mathbf{u}(T)\|_{\mathbf{L}^3(\Omega)}^2& \leq \|\mathbf{f}_0\|_{\mathbf{L}^{\frac{3}{2}}(\Omega)}+C_{\mathbf{f}}\mathfrak{C}_e
 \|\nabla T\|_{\mathbf{L}^{p}(\Omega)},\\
 \label{eq:bounds_forchheimercoupled_pressure}
 \|\nabla \mathsf{p} (T)\|_{\mathbf{L}^{\frac{3}{2}}(\Omega)} & \leq \|\mathbf{f}_0\|_{\mathbf{L}^{\frac{3}{2}}(\Omega)}\!+\!C_{\mathbf{f}}\mathfrak{C}_e\|\nabla T\|_{\mathbf{L}^{p}(\Omega)}\!+\!\nu_{+}\|\mathbf{u}(T)\|_{\mathbf{L}^{\frac{3}{2}}(\Omega)}\!+\!\|\mathbf{u}(T)\|_{\mathbf{L}^{3}(\Omega)}^2,
 \end{align}
where $\mathfrak{C}_e$ is the best constant in the Sobolev embedding $W_0^{1,p}(\Omega) \hookrightarrow L^{\frac{3}{2}}(\Omega)$ ($p>1$).

\subsection{A singular and stationary heat equation with convection} 
\label{subsection:heatequation} 
\EO{In this section, we review and extend to three dimensions some of the results obtained in \cite{ACFO:22}.} 

Let $\xi$ be a bounded and uniformly continuous function such that $0<\xi_{-}\leq \xi(\mathbf{x})\leq \xi_{+}$ for a.e. $\mathbf{x}\in \Omega$. With $\xi$ at hand, we introduce the following weak formulation for a singular and stationary heat equation with convection: Find $T\in W_0^{1,p}(\Omega)$ such that
\begin{equation}\label{eq:heat}
\displaystyle\int_\Omega\left(\xi\nabla T\cdot \nabla S - T\mathbf{v}\cdot\nabla S \right)\mathrm{d}\mathbf{x} =  \langle \delta_{z}, S\rangle \quad \forall S\in W_0^{1,p'}(\Omega).
\end{equation}
Here, $p$ is such that $\frac{2d}{d+1} -\epsilon< p < \frac{d}{d-1}$, for some $\epsilon >0$, $\mathbf{v} \in \mathbf{L}^3(\Omega)$, and $\frac{1}{p} + \frac{1}{p'}= 1$.

We begin our studies by providing a well-posedness result for the case $\mathbf{v}=\mathbf{0}$.

\begin{proposition}[case $\mathbf{v}=\mathbf{0}$]\label{prop:u=0}
Problem \eqref{eq:heat} with $\mathbf{v}=\mathbf{0}$ is well-posed. This, in particular, implies that
\begin{equation}\label{eq:inf_sup_Poisson}
\| \nabla R \|_{\mathbf{L}^{p}(\Omega)}\leq C_{\xi}\sup_{S\in W_0^{1,p'}(\Omega)}\dfrac{\int_\Omega\xi\nabla R\cdot \nabla S \mathrm{d}\mathbf{x}}{\|\nabla S\|_{\mathbf{L}^{p'}(\Omega)}}\qquad \forall R\in W_0^{1,p}(\Omega),
\end{equation}
with a constant $C_{\xi}$ that depends on $\xi$, $p$, and $\Omega$.
\label{pro:v=0}
\end{proposition}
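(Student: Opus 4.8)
The plan is to rewrite \eqref{eq:heat} with $\mathbf{v}=\mathbf{0}$ as the operator equation $\mathcal{L}T=\delta_z$, where $\mathcal{L}\colon W_0^{1,p}(\Omega)\to W^{-1,p}(\Omega)$ is given by $\langle\mathcal{L}R,S\rangle:=\int_\Omega\xi\nabla R\cdot\nabla S\,\mathrm{d}\mathbf{x}$, and to prove that $\mathcal{L}$ is a Banach space isomorphism for every $p$ with $\tfrac{2d}{d+1}-\epsilon<p<\tfrac{d}{d-1}$. The estimate \eqref{eq:inf_sup_Poisson} is then nothing but the boundedness of $\mathcal{L}^{-1}$, delivered by the open mapping theorem. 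Before anything else I would check that the datum is admissible: since $p<d/(d-1)$ is equivalent to $p'>d$, the Sobolev embedding $W_0^{1,p'}(\Omega)\hookrightarrow C(\bar\Omega)$ holds on the bounded Lipschitz domain $\Omega$, so $|\langle\delta_z,S\rangle|=|S(z)|\leq\|S\|_{C(\bar\Omega)}\lesssim\|\nabla S\|_{\mathbf{L}^{p'}(\Omega)}$ for all $S\in W_0^{1,p'}(\Omega)$ (passing to the seminorm via the Poincar\'e inequality). Hence $\delta_z\in W^{-1,p}(\Omega)$.

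The heart of the matter is the bijectivity of $\mathcal{L}$. Boundedness, with $\|\mathcal{L}\|\leq\xi_+$, is immediate from H\"older's inequality, and for $p=2$ the Lax--Milgram lemma applies because $\xi\geq\xi_->0$. For the remaining exponents I would rely on the fact that the set of $p\in(1,\infty)$ for which $\mathcal{L}$ is an isomorphism on $W_0^{1,p}(\Omega)$ is open, and is invariant under $p\mapsto p'$ because $\xi$ is scalar-valued (so $\mathcal{L}$ agrees with its formal adjoint); it therefore suffices to show that this set contains $[2,\tfrac{2d}{d-1}+\epsilon)$. This is the $W^{1,p}$ regularity theory for divergence-form elliptic operators with uniformly continuous coefficients on bounded Lipschitz domains: freezing the value of $\xi$ on small balls --- possible by uniform continuity --- reduces the question to the constant-coefficient, i.e., Laplacian, problem on Lipschitz domains, whose sharp isomorphism range is $(\tfrac{2d}{d+1}-\epsilon,\tfrac{2d}{d-1}+\epsilon)$, and the arising small perturbation is absorbed through a partition of unity and a Neumann series. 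In two dimensions this is carried out in \cite{ACFO:22}; the three-dimensional argument is entirely analogous.

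Once $\mathcal{L}$ is known to be an isomorphism, the open mapping theorem produces a finite constant $C_\xi$, depending on $\xi$ (through $\xi_\pm$ and its modulus of continuity), $p$, and $\Omega$, with $\|R\|_{W_0^{1,p}(\Omega)}\leq C_\xi\|\mathcal{L}R\|_{W^{-1,p}(\Omega)}$ for all $R\in W_0^{1,p}(\Omega)$; since $\|\nabla R\|_{\mathbf{L}^p(\Omega)}$ is an equivalent norm on $W_0^{1,p}(\Omega)$ and $\|\mathcal{L}R\|_{W^{-1,p}(\Omega)}\simeq\sup_{S\in W_0^{1,p'}(\Omega)}\int_\Omega\xi\nabla R\cdot\nabla S\,\mathrm{d}\mathbf{x}/\|\nabla S\|_{\mathbf{L}^{p'}(\Omega)}$ (the sign of $S$ being free), this is exactly \eqref{eq:inf_sup_Poisson}, and applying $\mathcal{L}^{-1}$ to $\delta_z$ gives the asserted well-posedness. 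I expect the isomorphism-range step to be the only real obstacle: any bounded measurable $\xi$ already yields invertibility for $p$ in a small neighbourhood of $2$ (Meyers' gradient estimate), but extending the range down to the geometrically sharp exponent $2d/(d+1)$ requires both the Lipschitz character of $\partial\Omega$ and the continuity of $\xi$, and this is precisely where the lower restriction on $p$ is used.
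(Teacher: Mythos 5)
Your proposal is correct and follows essentially the same route as the paper: the paper establishes well-posedness by citing the sharp $W^{1,p}$ theory for the Dirichlet problem on Lipschitz domains (Jerison--Kenig, Theorem 0.5(a)) together with a reference handling the passage to uniformly continuous coefficients, and then extracts \eqref{eq:inf_sup_Poisson} from bijectivity via the Ne\v{c}as/Banach--Ne\v{c}as--Babu\v{s}ka theorem --- which is exactly your open-mapping argument. What you have done is unpack the content of those citations (admissibility of $\delta_z$ via $p'>d$, Lax--Milgram at $p=2$, coefficient freezing plus perturbation to reach the Jerison--Kenig range, and duality for $p<2$), all of which is accurate, including the identification of where the lower restriction on $p$ and the Lipschitz/continuity hypotheses enter.
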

\begin{proof}
The fact that problem \eqref{eq:heat} is well-posed with $\mathbf{v}=\mathbf{0}$ on Lipschitz domains follows from \cite[Theorem 0.5, item (a)]{MR1331981} and \cite{MR2141694}. \EO{The desired inf--sup condition \eqref{eq:inf_sup_Poisson} thus follows from a result due to Ne\v{c}as \cite[Theorem 2.6]{Guermond-Ern}.}
\end{proof}

We now present a well--posedness result for the case of nonzero convection.

\begin{proposition}[case $\mathbf{v}\neq\mathbf{0}$]\label{prop:wp_temp}
\EO{If
$
C_\xi C_e\|\mathbf{v}\|_{\mathbf{L}^3(\Omega)} \leq \alpha< 1,	
$
then problem \eqref{eq:heat} is well-posed. In particular, we have the stability bound
\begin{equation}\label{eq:estimatesheat}
\| \nabla T\|_{\mathbf{L}^{p}(\Omega)}\leq C_{\alpha} \|\delta_z\|_{W^{-1,p}(\Omega)}, \quad C_{\alpha}=\tfrac{C_\xi}{1-\alpha}, 
\qquad
\tfrac{2d}{d+1} -\epsilon< p < \tfrac{d}{d-1},
\end{equation}
for some $\epsilon > 0$. Here, $C_e$ denote the best constant in $W_0^{1,p}(\Omega)\hookrightarrow L^{\frac{dp}{d-p}}(\Omega)$.}
\end{proposition}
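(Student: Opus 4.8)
The plan is to treat the convective term as a perturbation of the problem analyzed in Proposition \ref{pro:v=0} and to exploit the inf--sup condition \eqref{eq:inf_sup_Poisson}. I will run a Banach fixed point argument on $W_0^{1,p}(\Omega)$ equipped with the norm $\|\nabla\cdot\|_{\mathbf{L}^p(\Omega)}$; an essentially equivalent alternative would be to show directly that the bilinear form underlying \eqref{eq:heat} inherits an inf--sup condition with constant $(1-\alpha)/C_\xi$ and then to invoke the Banach--Ne\v{c}as--Babu\v{s}ka theorem, but the contraction route is cleaner.

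Given $w\in W_0^{1,p}(\Omega)$, let $T=\Phi(w)\in W_0^{1,p}(\Omega)$ be the solution of
\begin{equation*}
\int_\Omega \xi\nabla T\cdot\nabla S\,\mathrm{d}\mathbf{x} = \langle\delta_z,S\rangle + \int_\Omega w\,\mathbf{v}\cdot\nabla S\,\mathrm{d}\mathbf{x}\qquad\forall S\in W_0^{1,p'}(\Omega).
\end{equation*}
The right-hand side is a bounded linear functional on $W_0^{1,p'}(\Omega)$: since $p<d/(d-1)$ we have $p'>d$, hence $W_0^{1,p'}(\Omega)\hookrightarrow C(\bar\Omega)$ and $|\langle\delta_z,S\rangle|=|S(z)|\lesssim\|\nabla S\|_{\mathbf{L}^{p'}(\Omega)}$, i.e. $\delta_z\in W^{-1,p}(\Omega)$; and, arguing as in \eqref{eq:termconv}, $|\int_\Omega w\,\mathbf{v}\cdot\nabla S\,\mathrm{d}\mathbf{x}|\leq C_e\|\mathbf{v}\|_{\mathbf{L}^3(\Omega)}\|\nabla w\|_{\mathbf{L}^p(\Omega)}\|\nabla S\|_{\mathbf{L}^{p'}(\Omega)}$. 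Thus Proposition \ref{pro:v=0} shows $\Phi$ is well defined. Subtracting the equations defining $\Phi(w_1)$ and $\Phi(w_2)$ and using \eqref{eq:inf_sup_Poisson} followed by the previous bound,
\begin{equation*}
\|\nabla(\Phi(w_1)-\Phi(w_2))\|_{\mathbf{L}^p(\Omega)}\leq C_\xi\sup_{S\in W_0^{1,p'}(\Omega)}\frac{\int_\Omega(w_1-w_2)\,\mathbf{v}\cdot\nabla S\,\mathrm{d}\mathbf{x}}{\|\nabla S\|_{\mathbf{L}^{p'}(\Omega)}}\leq C_\xi C_e\|\mathbf{v}\|_{\mathbf{L}^3(\Omega)}\|\nabla(w_1-w_2)\|_{\mathbf{L}^p(\Omega)}\leq\alpha\|\nabla(w_1-w_2)\|_{\mathbf{L}^p(\Omega)}.
\end{equation*}
Since $\alpha<1$, the Banach fixed point theorem produces a unique $T$ with $\Phi(T)=T$, which is precisely the unique solution of \eqref{eq:heat}.

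The stability bound \eqref{eq:estimatesheat} follows by testing \eqref{eq:heat} and applying \eqref{eq:inf_sup_Poisson} with $R=T$: one gets $\|\nabla T\|_{\mathbf{L}^p(\Omega)}\leq C_\xi\|\delta_z\|_{W^{-1,p}(\Omega)}+C_\xi C_e\|\mathbf{v}\|_{\mathbf{L}^3(\Omega)}\|\nabla T\|_{\mathbf{L}^p(\Omega)}\leq C_\xi\|\delta_z\|_{W^{-1,p}(\Omega)}+\alpha\|\nabla T\|_{\mathbf{L}^p(\Omega)}$, and absorbing the last term on the left gives $C_\alpha=C_\xi/(1-\alpha)$. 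Uniqueness also follows directly, since the difference of two solutions of \eqref{eq:heat} solves the homogeneous problem and \eqref{eq:inf_sup_Poisson} then forces it to vanish.

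I do not anticipate a genuine obstacle: the delicate point — well-posedness of the continuous-coefficient diffusion operator on a Lipschitz domain in the non-Hilbertian scale $W_0^{1,p}$ — has already been absorbed into Proposition \ref{pro:v=0}, and the restriction $\frac{2d}{d+1}-\epsilon<p<\frac{d}{d-1}$ is exactly what simultaneously guarantees $\delta_z\in W^{-1,p}(\Omega)$ (via $p'>d$) and keeps both $p$ and $p'$ within the admissible range of \cite{MR1331981}. The only care required is the bookkeeping of the embedding constants so that the smallness hypothesis $C_\xi C_e\|\mathbf{v}\|_{\mathbf{L}^3(\Omega)}\leq\alpha<1$ produces precisely the contraction factor $\alpha$ and the stated constant $C_\alpha$.
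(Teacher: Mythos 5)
Your argument is correct and is essentially the same perturbation argument that the paper invokes by citation: the paper's proof simply defers to \cite[Proposition 3.3]{ACFO:22}, where the convective term is likewise absorbed as a small perturbation of the $\mathbf{v}=\mathbf{0}$ case via the inf--sup condition \eqref{eq:inf_sup_Poisson}, the bound \eqref{eq:termconv}, and the smallness hypothesis, yielding the same constant $C_\alpha = C_\xi/(1-\alpha)$. Your contraction-mapping packaging and your direct absorption argument for the stability bound are a faithful and complete instantiation of the details the paper omits.
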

\begin{proof}
The proof follows from a simple adaption of the arguments elaborated in the proof of \cite[Proposition 3.3]{ACFO:22}.
\end{proof}


\subsection{Existence of solutions for the coupled problem} 
\GC{We now study the existence of solutions for \eqref{eq:heat_decoupled}--\eqref{eq:Darcy_decoupled} on the basis \EO{of} a fixed point argument.} To accomplish this task, we introduce the map $\mathcal{F}: W_0^{1,p}(\Omega)\to W_0^{1,p}(\Omega)$ defined by $\mathcal{F}(\theta)=\zeta$, where $\zeta$ denotes the solution to the following problem: Find $\zeta \in W_0^{1,p}(\Omega)$ such that
\begin{equation}\label{eq:operator_F}
\int_\Omega(\kappa(\theta)\nabla \zeta\cdot \nabla S - \zeta \mathbf{u}(\theta)\cdot\nabla S)\mathrm{d}\mathbf{x}  =  \langle \delta_{z}, S\rangle \quad \forall S\in W_0^{1,p'}(\Omega).
\end{equation}
The definition of $\zeta$ compromises solving, for a prescribed temperature $\theta \in W_0^{1,p}(\Omega)$, the Darcy--Forchheimer problem \eqref{eq:Darcy_decoupled} with $T$ being replaced by $\theta$. 

To present the following result, we define
$
\mathfrak{B}_{T}:= \{ \theta\in W_0^{1,p}(\Omega): \|\nabla\theta\|_{\mathbf{L}^{p}(\Omega)}\leq C_{\frac{1}{2}}\|\delta_{z}\|_{W^{-1,p}(\Omega)}  \}
$,
where $C_{\frac{1}{2}}$ is as in \eqref{eq:estimatesheat}, and the constant
\begin{equation}\label{def:constant_C}
\mathfrak{C}:=(2C_{e}C_{\kappa})^{-1}.
\end{equation}
Here, $C_e$ is the best constant in the embedding $W_0^{1,p}(\Omega)\hookrightarrow L^{\frac{dp}{d-p}}(\Omega)$ and $C_{\kappa}$ is \GC{the} constant involved in the inf-sup condition \eqref{eq:inf_sup_Poisson} with $\xi$ being replaced by $\kappa$.

\begin{lemma}[$\mathcal{F}(\mathfrak{B}_{T}) \subset \mathfrak{B}_{T}$] 
\label{lemma:welldefined}
\EO{If $p \in \big(\frac{2d}{d+1}-\epsilon, \frac{d}{d-1} \big)$, for some $\epsilon >0$, and
$
 \|\mathbf{f}_0\|_{\mathbf{L}^{\frac{3}{2}}(\Omega)}+C_{\mathbf{f}} \mathfrak{C_e} C_{\frac{1}{2}}\|\delta_z\|_{W^{-1,p}(\Omega)}\leq \mathfrak{C}^2,
$
then $\mathcal{F}$ is well-defined on $\mathfrak{B}_{T}$ and $\mathcal{F}(\mathfrak{B}_{T})\subset \mathfrak{B}_{T}$.}
\end{lemma}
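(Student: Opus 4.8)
The plan is to show that for any $\theta \in \mathfrak{B}_T$, the image $\zeta = \mathcal{F}(\theta)$ is well-defined and satisfies $\|\nabla \zeta\|_{\mathbf{L}^p(\Omega)} \leq C_{\frac{1}{2}}\|\delta_z\|_{W^{-1,p}(\Omega)}$. First I would verify that $\mathcal{F}(\theta)$ makes sense: given $\theta \in \mathfrak{B}_T$, the Darcy--Forchheimer problem \eqref{eq:Darcy_decoupled} with data $\mathbf{f}(\theta) \in \mathbf{L}^{3/2}(\Omega)$ admits a unique solution $(\mathbf{u}(\theta),\mathsf{p}(\theta))$ by Theorem \ref{thm:wp_darcy_forchheimer}, and by \eqref{eq:bounds_forchheimercoupled_velocity} together with the bound $\|\nabla \theta\|_{\mathbf{L}^p(\Omega)} \leq C_{\frac{1}{2}}\|\delta_z\|_{W^{-1,p}(\Omega)}$ defining $\mathfrak{B}_T$, we obtain
\[
\|\mathbf{u}(\theta)\|_{\mathbf{L}^3(\Omega)}^2 \leq \|\mathbf{f}_0\|_{\mathbf{L}^{3/2}(\Omega)} + C_{\mathbf{f}}\mathfrak{C}_e C_{\frac{1}{2}}\|\delta_z\|_{W^{-1,p}(\Omega)} \leq \mathfrak{C}^2,
\]
which is precisely where the smallness hypothesis on the data is used. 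Hence $\|\mathbf{u}(\theta)\|_{\mathbf{L}^3(\Omega)} \leq \mathfrak{C} = (2C_e C_\kappa)^{-1}$.

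Next, I would apply Proposition \ref{prop:wp_temp} with the choices $\xi = \kappa(\theta)$ (which is bounded, uniformly continuous, and satisfies $0 < \kappa_- \leq \kappa(\theta) \leq \kappa_+$ by the diffusivity assumption in section \ref{sec:main_assump}, since $\theta$ is a fixed function) and $\mathbf{v} = \mathbf{u}(\theta)$. The key point is to check the smallness condition $C_\xi C_e \|\mathbf{v}\|_{\mathbf{L}^3(\Omega)} \leq \alpha < 1$: here $C_\xi = C_\kappa$ (the inf--sup constant of \eqref{eq:inf_sup_Poisson} with $\xi$ replaced by $\kappa$), and therefore
\[
C_\kappa C_e \|\mathbf{u}(\theta)\|_{\mathbf{L}^3(\Omega)} \leq C_\kappa C_e \cdot (2 C_e C_\kappa)^{-1} = \tfrac12 < 1,
\]
so we may take $\alpha = \tfrac12$. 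Proposition \ref{prop:wp_temp} then guarantees that \eqref{eq:operator_F} has a unique solution $\zeta \in W_0^{1,p}(\Omega)$, so $\mathcal{F}$ is well-defined on $\mathfrak{B}_T$, and moreover the stability bound \eqref{eq:estimatesheat} yields
\[
\|\nabla \zeta\|_{\mathbf{L}^p(\Omega)} \leq C_{\frac12} \|\delta_z\|_{W^{-1,p}(\Omega)},
\]
which is exactly the condition for $\zeta \in \mathfrak{B}_T$. This closes the argument.

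The only genuinely delicate point is bookkeeping the constants so that the two smallness conditions dovetail: the definition $\mathfrak{C} = (2 C_e C_\kappa)^{-1}$ is reverse-engineered so that the velocity bound produced by the data hypothesis (which forces $\|\mathbf{u}(\theta)\|_{\mathbf{L}^3} \leq \mathfrak{C}$) feeds exactly into the hypothesis of Proposition \ref{prop:wp_temp} with $\alpha = \tfrac12$, which in turn produces the constant $C_{\frac12}$ appearing in the definition of $\mathfrak{B}_T$. One should be careful that the $C_e$ in the Darcy bound \eqref{eq:bounds_forchheimercoupled_velocity} (via $\mathfrak{C}_e$, the embedding constant into $L^{3/2}$) is not conflated with the $C_e$ in the convection estimate \eqref{eq:termconv} (the embedding constant into $L^{dp/(d-p)}$); only the latter enters $\mathfrak{C}$ and the smallness check for Proposition \ref{prop:wp_temp}. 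Beyond this constant-chasing, there is no real obstacle: the heavy lifting is already done in Theorem \ref{thm:wp_darcy_forchheimer} and Proposition \ref{prop:wp_temp}, and the range restriction on $p$ is inherited directly from those results.
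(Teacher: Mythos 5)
Your argument is correct and is essentially the paper's own: the paper defers the proof to \cite[Lemma 5]{ACFO:22}, but the discrete analogue (Lemma \ref{lemma:welldefined_Fh}) is proved in the paper with exactly your chain of reasoning --- the data smallness forces $\|\mathbf{u}(\theta)\|_{\mathbf{L}^3(\Omega)}\leq\mathfrak{C}$ via \eqref{eq:bounds_forchheimercoupled_velocity}, which verifies the hypothesis of Proposition \ref{prop:wp_temp} with $\alpha=\tfrac12$, and the resulting stability bound \eqref{eq:estimatesheat} places $\zeta$ back in $\mathfrak{B}_T$. Your remark distinguishing the two embedding constants $\mathfrak{C}_e$ and $C_e$ is also the right point of care.
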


\GC{\begin{proof}
The proof follows the same arguments as those developed in the proof of \cite[Lemma 5]{ACFO:22}. For brevity, we skip the details.
\end{proof}}


The following result is instrumental to show the compactness of the operator $\mathcal{F}$.

\begin{lemma}[convergence of sequences]
\label{thm:convergence_seq_u_theta}
Let $\{\theta_n\}_{n\geq0}$ be a sequence in $L^2(\Omega)$ such that $\theta_n \rightarrow \theta$ in $L^2(\Omega)$ as $n \uparrow \infty$. Then, as $n \uparrow \infty$,
\[
 \mathbf{u}(\theta_n) \rightarrow \mathbf{u}(\theta) \textrm{ in } \mathbf{L}^3(\Omega),
 \qquad
 \mathsf{p}(\theta_n) \rightharpoonup \mathsf{p}(\theta) \textrm{ in } W^{1,\frac{3}{2}}(\Omega)\cap L_0^2(\Omega).
\]
\end{lemma}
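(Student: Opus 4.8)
The plan is to reduce everything to the continuity of the Nemytskii map $\theta\mapsto\mathbf{f}_1(\theta)$ on $L^2(\Omega)$, combined with the monotonicity estimate \eqref{eq:prop2_A}, the pointwise bound \eqref{eq:prop5_A}, and the inf--sup condition \eqref{eq:infsup}. First I would record that, since $\Omega$ is bounded, $L^2(\Omega)\hookrightarrow\mathbf{L}^{3/2}(\Omega)$, and since $\mathbf{f}_1$ is Lipschitz with $\mathbf{f}_1(\mathbf{0})=\mathbf{0}$, the data map $\theta\mapsto\mathbf{f}(\theta)=\mathbf{f}_0+\mathbf{f}_1(\theta)$ is well-defined from $L^2(\Omega)$ into $\mathbf{L}^{3/2}(\Omega)$ and satisfies $\|\mathbf{f}(\theta_n)-\mathbf{f}(\theta)\|_{\mathbf{L}^{3/2}(\Omega)}\lesssim\|\mathbf{f}_1(\theta_n)-\mathbf{f}_1(\theta)\|_{L^2(\Omega)}\le C_{\mathbf{f}}\|\theta_n-\theta\|_{L^2(\Omega)}\to0$. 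In particular $\mathbf{u}(\theta_n)$, $\mathbf{u}(\theta)$, $\mathsf{p}(\theta_n)$ and $\mathsf{p}(\theta)$ are all well-defined by Theorem \ref{thm:wp_darcy_forchheimer}, and the a priori bound \eqref{eq:bounds_forchheimer_velocity} together with the $L^2$-convergence of $\{\theta_n\}$ furnishes a uniform bound $M:=\sup_{n}\|\mathbf{u}(\theta_n)\|_{\mathbf{L}^3(\Omega)}<\infty$.

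For the velocity, I would subtract the equations \eqref{eq:Darcy_decoupled} satisfied by $\mathbf{u}(\theta_n)$ and $\mathbf{u}(\theta)$, written in the equivalent divergence-free form \eqref{eq:Darcy_reduced} with data $\mathbf{f}(\theta_n)$ and $\mathbf{f}(\theta)$, and test with $\mathbf{v}=\mathbf{u}(\theta_n)-\mathbf{u}(\theta)\in\mathbf{V}$. By \eqref{eq:prop2_A} the left-hand side is bounded below by $\gamma_d\|\mathbf{u}(\theta_n)-\mathbf{u}(\theta)\|^3_{\mathbf{L}^3(\Omega)}$, while the right-hand side equals $\int_\Omega(\mathbf{f}(\theta_n)-\mathbf{f}(\theta))\cdot(\mathbf{u}(\theta_n)-\mathbf{u}(\theta))\,\mathrm{d}\mathbf{x}\le\|\mathbf{f}(\theta_n)-\mathbf{f}(\theta)\|_{\mathbf{L}^{3/2}(\Omega)}\|\mathbf{u}(\theta_n)-\mathbf{u}(\theta)\|_{\mathbf{L}^3(\Omega)}$. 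Dividing through gives $\gamma_d\|\mathbf{u}(\theta_n)-\mathbf{u}(\theta)\|^2_{\mathbf{L}^3(\Omega)}\le\|\mathbf{f}(\theta_n)-\mathbf{f}(\theta)\|_{\mathbf{L}^{3/2}(\Omega)}\to0$, which is the first assertion.

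For the pressure, I would use the first equation of \eqref{eq:Darcy_decoupled} to write, for every $\mathbf{v}\in\mathbf{X}$, $\int_\Omega\nabla(\mathsf{p}(\theta_n)-\mathsf{p}(\theta))\cdot\mathbf{v}\,\mathrm{d}\mathbf{x}=\int_\Omega(\mathbf{f}(\theta_n)-\mathbf{f}(\theta))\cdot\mathbf{v}\,\mathrm{d}\mathbf{x}-\int_\Omega(\mathcal{A}(\mathbf{u}(\theta_n))-\mathcal{A}(\mathbf{u}(\theta)))\cdot\mathbf{v}\,\mathrm{d}\mathbf{x}$, so that the inf--sup condition \eqref{eq:infsup} yields $\|\nabla(\mathsf{p}(\theta_n)-\mathsf{p}(\theta))\|_{\mathbf{L}^{3/2}(\Omega)}\le\|\mathbf{f}(\theta_n)-\mathbf{f}(\theta)\|_{\mathbf{L}^{3/2}(\Omega)}+\|\mathcal{A}(\mathbf{u}(\theta_n))-\mathcal{A}(\mathbf{u}(\theta))\|_{\mathbf{L}^{3/2}(\Omega)}$. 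The first term tends to zero as above; for the second I would apply the pointwise estimate \eqref{eq:prop5_A} and Hölder's inequality with exponents $3$ and $3/2$ to get $\|\mathcal{A}(\mathbf{u}(\theta_n))-\mathcal{A}(\mathbf{u}(\theta))\|_{\mathbf{L}^{3/2}(\Omega)}\le\|\mathbf{u}(\theta_n)-\mathbf{u}(\theta)\|_{\mathbf{L}^3(\Omega)}\big(\nu_+|\Omega|^{1/3}+M+\|\mathbf{u}(\theta)\|_{\mathbf{L}^3(\Omega)}\big)\to0$ by the velocity convergence already established. A Poincaré inequality in $W^{1,3/2}(\Omega)$ applied to the zero-mean function $\mathsf{p}(\theta_n)-\mathsf{p}(\theta)$ then upgrades this to convergence in the norm of $W^{1,3/2}(\Omega)\cap L^2_0(\Omega)$, hence a fortiori to weak convergence, which completes the argument. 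The only mildly delicate point is the uniform bound $M<\infty$, which is precisely what is needed to absorb the quadratic nonlinearity in $\mathcal{A}$ when estimating the pressure; it is provided at once by \eqref{eq:bounds_forchheimer_velocity} and the boundedness of $\{\theta_n\}$ in $L^2(\Omega)$.
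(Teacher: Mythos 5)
Your argument is correct and complete. The paper itself does not write out a proof here---it simply cites \cite[Lemma 2.5]{MR4337455}---so your self-contained derivation is welcome, and it is the natural one: Lipschitz continuity of $\mathbf{f}_1$ plus $L^2(\Omega)\hookrightarrow \mathbf{L}^{3/2}(\Omega)$ gives convergence of the data, the monotonicity bound \eqref{eq:prop2_A} tested with $\mathbf{u}(\theta_n)-\mathbf{u}(\theta)\in\mathbf{V}$ gives the velocity convergence, and the inf--sup condition \eqref{eq:infsup} together with \eqref{eq:prop5_A} and the uniform bound from \eqref{eq:bounds_forchheimer_velocity} transfers this to the pressure. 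Note that you actually prove \emph{strong} convergence of $\mathsf{p}(\theta_n)$ in $W^{1,\frac{3}{2}}(\Omega)\cap L_0^2(\Omega)$, which is stronger than the weak convergence asserted in the lemma. The only cosmetic slip is the phrase ``H\"older's inequality with exponents $3$ and $3/2$'': the inequality you use is $\|fg\|_{L^{3/2}(\Omega)}\leq\|f\|_{L^{3}(\Omega)}\|g\|_{L^{3}(\Omega)}$, i.e., H\"older with the pair of exponents $(2,2)$ relative to $L^{3/2}$; the displayed estimate itself is correct.
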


\begin{proof}
See \cite[Lemma 2.5]{MR4337455}.
\end{proof}

We are now ready to show existence of solutions. 

\begin{theorem}[existence of solutions]\label{thm:existence}
In the framework of Lemma \ref{lemma:welldefined}, there exists a solution $(\mathbf{u},\mathsf{p},T)\in \mathbf{X}\times M\times Y$ of problem \eqref{eq:modelweak}. In addition, we have that $T\in \mathfrak{B}_{T}$.
\end{theorem}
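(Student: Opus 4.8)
The strategy is to apply the Leray--Schauder fixed point theorem to the map $\mathcal{F}$ defined in \eqref{eq:operator_F}, using the invariant ball $\mathfrak{B}_{T}$ produced by Lemma \ref{lemma:welldefined}. Since any fixed point $T = \mathcal{F}(T)$ is, by construction, a solution of the reduced problem \eqref{eq:heat_decoupled}, and since the pair $(\mathbf{u}(T),\mathsf{p}(T))$ solving \eqref{eq:Darcy_decoupled} then makes $(\mathbf{u},\mathsf{p},T)$ a solution of \eqref{eq:modelweak}, it suffices to verify the hypotheses of the fixed point theorem: (a) $\mathcal{F}$ maps $\mathfrak{B}_{T}$ into itself (this is exactly Lemma \ref{lemma:welldefined}); (b) $\mathcal{F}$ is continuous on $\mathfrak{B}_{T}$; and (c) $\mathcal{F}(\mathfrak{B}_{T})$ is relatively compact in $W_0^{1,p}(\Omega)$. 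The bound $T \in \mathfrak{B}_{T}$ then comes for free from (a), since the fixed point lies in $\mathfrak{B}_{T}$.

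First I would address compactness and continuity together via a sequential argument. Take a sequence $\{\theta_n\} \subset \mathfrak{B}_{T}$; since $\mathfrak{B}_{T}$ is bounded in $W_0^{1,p}(\Omega)$, and since, for the relevant range of $p$, the embedding $W_0^{1,p}(\Omega) \hookrightarrow L^2(\Omega)$ is compact (here the lower bound $p > \tfrac{2d}{d+1}-\epsilon$ is what guarantees $\tfrac{dp}{d-p} \geq 2$, hence $W_0^{1,p}(\Omega)\hookrightarrow\hookrightarrow L^2(\Omega)$), we may extract a subsequence (not relabeled) with $\theta_n \to \theta$ in $L^2(\Omega)$. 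Lemma \ref{thm:convergence_seq_u_theta} then yields $\mathbf{u}(\theta_n) \to \mathbf{u}(\theta)$ in $\mathbf{L}^3(\Omega)$. Writing $\zeta_n = \mathcal{F}(\theta_n)$, the functions $\zeta_n$ are uniformly bounded in $W_0^{1,p}(\Omega)$ (again by \eqref{eq:estimatesheat}, since $\|\mathbf{u}(\theta_n)\|_{\mathbf{L}^3(\Omega)}$ is controlled through \eqref{eq:bounds_forchheimercoupled_velocity} and the smallness hypothesis of Lemma \ref{lemma:welldefined} keeps $C_\kappa C_e \|\mathbf{u}(\theta_n)\|_{\mathbf{L}^3(\Omega)} \leq \tfrac12$), so a further subsequence satisfies $\zeta_n \rightharpoonup \zeta$ in $W_0^{1,p}(\Omega)$ and $\zeta_n \to \zeta$ in $L^2(\Omega)$. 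I would then pass to the limit in the defining identity \eqref{eq:operator_F}: the convective term $\int_\Omega \zeta_n \mathbf{u}(\theta_n)\cdot\nabla S\,\mathrm{d}\mathbf{x}$ converges because $\zeta_n \to \zeta$ strongly in $L^{\frac{3p}{3-p}}(\Omega)$ (compact Sobolev embedding, using the uniform $W_0^{1,p}$ bound) and $\mathbf{u}(\theta_n)\to\mathbf{u}(\theta)$ in $\mathbf{L}^3(\Omega)$, while the diffusion term is handled by writing $\kappa(\theta_n)\nabla\zeta_n = (\kappa(\theta_n)-\kappa(\theta))\nabla\zeta_n + \kappa(\theta)\nabla\zeta_n$, noting $\kappa(\theta_n)\to\kappa(\theta)$ a.e. with $|\kappa(\theta_n)-\kappa(\theta)|\leq 2\kappa_+$ bounded (dominated convergence gives $(\kappa(\theta_n)-\kappa(\theta))\nabla S \to 0$ in $\mathbf{L}^{p'}(\Omega)$) and $\nabla\zeta_n\rightharpoonup\nabla\zeta$ in $\mathbf{L}^p(\Omega)$. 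Hence $\zeta = \mathcal{F}(\theta)$, which identifies the limit; a standard subsequence-of-subsequences argument upgrades this to convergence of the full original sequence, giving both continuity of $\mathcal{F}$ and compactness of $\mathcal{F}(\mathfrak{B}_{T})$.

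Finally, I would invoke the Leray--Schauder (Schauder) fixed point theorem: $\mathfrak{B}_{T}$ is a nonempty, closed, bounded, convex subset of the Banach space $W_0^{1,p}(\Omega)$, $\mathcal{F}:\mathfrak{B}_{T}\to\mathfrak{B}_{T}$ is continuous, and $\mathcal{F}(\mathfrak{B}_{T})$ is relatively compact; therefore $\mathcal{F}$ has a fixed point $T\in\mathfrak{B}_{T}$. Setting $(\mathbf{u},\mathsf{p}) = (\mathbf{u}(T),\mathsf{p}(T))$ completes the construction of a weak solution of \eqref{eq:modelweak}, with $T\in\mathfrak{B}_{T}$ as asserted. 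I expect the main obstacle to be the limit passage in the variational identity \eqref{eq:operator_F}, specifically ensuring that the nonlinear diffusion coefficient $\kappa(\theta_n)$ and the convective product $\zeta_n\mathbf{u}(\theta_n)$ both pass to the limit with only the weak convergence of $\nabla\zeta_n$ available; the key is to exploit the compact embedding $W_0^{1,p}(\Omega)\hookrightarrow\hookrightarrow L^{\frac{3p}{3-p}}(\Omega)$ for the convective term and almost-everywhere convergence plus uniform boundedness of $\kappa$ for the diffusion term, which is precisely where the restriction on the range of $p$ and the Lipschitz/boundedness assumptions on $\kappa$ enter.
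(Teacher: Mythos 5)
Your overall strategy---Schauder/Leray--Schauder on the invariant ball $\mathfrak{B}_{T}$, with compactness of $\mathcal{F}$ as the only nontrivial hypothesis to check---is exactly the paper's, and your ingredients for the limit passage (Rellich--Kondrachov, Lemma \ref{thm:convergence_seq_u_theta} to get $\mathbf{u}(\theta_n)\to\mathbf{u}(\theta)$ in $\mathbf{L}^3(\Omega)$, dominated convergence for $\kappa(\theta_n)$) are the same ones the paper uses. However, there is a genuine gap in your compactness step. You extract a subsequence with $\zeta_n\rightharpoonup\zeta$ in $W_0^{1,p}(\Omega)$ and identify the limit as $\mathcal{F}(\theta)$ by passing to the limit in \eqref{eq:operator_F}; but this only yields \emph{weak} convergence of $\zeta_n$ in $W_0^{1,p}(\Omega)$ (plus strong convergence in lower-order norms). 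The Schauder/Leray--Schauder theorem you invoke requires $\mathcal{F}(\mathfrak{B}_{T})$ to be relatively compact in the \emph{norm} topology of $W_0^{1,p}(\Omega)$, i.e., that a subsequence of $\{\zeta_n\}$ converges strongly in $W_0^{1,p}(\Omega)$. Weak convergence of $\nabla\zeta_n$ does not give this, so as written your verification of hypothesis (c) fails, and the same issue undermines your claim of norm continuity of $\mathcal{F}$.

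The paper closes this gap by a stability (error-equation) argument rather than by identifying the weak limit: it observes that $e_{\zeta,n}:=\zeta-\zeta_n$ solves a problem of the form \eqref{eq:heat} with diffusion coefficient $\kappa(\theta_n)$, convection $\mathbf{u}(\theta)$, and right-hand side $g_n$ collecting the differences $\zeta_n(\mathbf{u}(\theta)-\mathbf{u}(\theta_n))$ and $(\kappa(\theta_n)-\kappa(\theta))\nabla\zeta$; since the smallness condition guarantees $C_\kappa C_e\|\mathbf{u}(\theta)\|_{\mathbf{L}^3(\Omega)}\le\tfrac12$, the stability bound \eqref{eq:estimatesheat} of Proposition \ref{prop:wp_temp} gives $\|\nabla e_{\zeta,n}\|_{\mathbf{L}^p(\Omega)}\le C_{\frac12}\|g_n\|_{W^{-1,p}(\Omega)}$, and showing $g_n\to 0$ in $W^{-1,p}(\Omega)$ (using precisely the convergences you already established) yields the required \emph{strong} convergence $\zeta_n\to\zeta$ in $W_0^{1,p}(\Omega)$. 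Your argument is repairable by inserting this step; alternatively one could appeal to a weak-topology fixed point theorem for weakly sequentially continuous maps on weakly compact convex sets, but that is not what you invoked.
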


\begin{proof}
\EO{Notice that} $\mathfrak{B}_{T}$ is nonempty, closed, bounded, and convex. \EO{In addition,} in view of Lemma \ref{lemma:welldefined} we have that $\mathcal{F}(\mathfrak{B}_T)\subset \mathfrak{B}_T$. It thus suffices to prove the compactness of $\mathcal{F}$ to conclude the desired result on the basis of the Leray--Schauder fixed point theorem 
applied to \EO{$\mathcal{F}$; see} \cite[Theorem 8.8]{MR787404} and \cite[Theorem 9.12-1]{MR3136903}.

We recall that $p$ is such that $2d/(d+1) -\epsilon< p < d/(d-1)$ for some $\epsilon >0$. Let $\{\theta_{n}\}_{n\geq 0}\subset \mathfrak{B}_{T}$ be a sequence such that $\theta_{n}\rightharpoonup \theta$ \GC{in} $W_0^{1,p}(\Omega)$ as $n \uparrow \infty$. \EO{Since $\mathfrak{B}_{T}$ is weakly closed, $\theta \in \mathfrak{B}_T$.} Define $\zeta:=\mathcal{F}(\theta)$ and, for $n \in \mathbb{N}_0$, $\zeta_{n}:=\mathcal{F}(\theta_n)$. \EO{Let us} prove that
$\zeta_{n}\to \zeta$ in $W_0^{1,p}(\Omega)$ as $n\uparrow\infty$. Observe that $e_{\zeta,n}:=\zeta-\zeta_n$ verifies the relation
\begin{equation*}
\int_\Omega \left(\kappa(\theta_n)\nabla e_{\zeta,n}-e_{\zeta,n}\mathbf{u}(\theta)\right)\cdot \nabla S \mathrm{d}\mathbf{x}=\langle g_n, S\rangle 
\quad
\forall S \in W_0^{1,p'}(\Omega),
\end{equation*} 
where, for $S \in W_0^{1,p'}(\Omega)$,
$
\langle g_n, S\rangle:=\int_\Omega\left[ \zeta_n (\mathbf{u}(\theta)-\mathbf{u}(\theta_{n}))+(\kappa(\theta_n)-\kappa(\theta))\nabla \zeta \right]\cdot \nabla S\mathrm{d}\mathbf{x},
$
i.e., for $n \in \mathbb{N}_0$, $e_{\zeta,n}$ solves \eqref{eq:heat} with $\delta_z$, $\mathbf{v}$, and $\xi$ being replaced by $g_n$, $\mathbf{u}(\theta)$, and $\kappa(\theta_n)$, respectively. Proposition \ref{prop:wp_temp} with $\alpha = 1/2$ thus \GC{guarantees} that
\begin{equation}\label{eq:bound_e_zeta_n}
\|\nabla e_{\zeta,n}\|_{\mathbf{L}^{p}(\Omega)}\leq C_{\frac{1}{2}} \|g_n\|_{W^{-1,p}(\Omega)},
\end{equation}
because, as a consequence of \GC{\eqref{eq:bounds_forchheimercoupled_velocity},} $\|\mathbf{u}(\theta)\|_{\mathbf{L}^3(\Omega)}\leq \mathfrak{C}$. We now prove that $g_{n} \rightarrow 0$ \GC{in $W^{-1,p}(\Omega)$} as $n\uparrow \infty$. To accomplish this task, we analyze each of the terms \GC{involved} in the definition of $g_{n}$ separately. First, notice that the estimates in \eqref{eq:termconv} reveal that
\begin{align*}
\int_{\Omega}\zeta_n(\mathbf{u}(\theta)-\mathbf{u}(\theta_{n}))\cdot \nabla S\mathrm{d}\mathbf{x}\leq C_e \|\nabla \zeta_n\|_{\mathbf{L}^{p}(\Omega)}\|\mathbf{u}(\theta)-\mathbf{u}(\theta_{n})\|_{\mathbf{L}^3(\Omega)}\|\nabla S\|_{\mathbf{L}^{p'}(\Omega)}.
\end{align*}
Now, since $\theta_{n}\rightharpoonup \theta\in W_0^{1,p}(\Omega)$ as $n \uparrow \infty$, an application of the Rellich--Kondrachov theorem \cite[Theorem 6.3, Part I]{MR2424078} reveals that $\theta_{n}\rightarrow \theta$ in $L^{r}(\Omega)$, as $n \uparrow \infty$, for every $r$ such that $1 \leq r<dp/(d-p)$. Invoke Lemma \ref{thm:convergence_seq_u_theta}, upon observing that $dp/(d-p)>2$, to conclude that $\mathbf{u}(\theta_n)\to\mathbf{u}(\theta)$ in $\mathbf{L}^3(\Omega)$ as $n \uparrow \infty$. To control the remaining term in $g_{n}$, we observe that, since $\kappa$ is continuous and uniformly bounded and $\theta_{n}\rightarrow \theta$ in $L^{r}(\Omega)$, we have that $\kappa(\theta_n) \rightarrow \kappa(\theta)$ in $L^{r}(\Omega)$ as $n \uparrow \infty$. Consequently, $(\kappa(\theta_n)-\kappa(\theta))\nabla \zeta \to \mathbf{0}$ in $\mathbf{L}^p(\Omega)$. Therefore, in view of \eqref{eq:bound_e_zeta_n}, $\zeta_n\to \zeta$ in $W_0^{1,p}(\Omega)$ as $n\uparrow \infty$. We have thus proved that the weak convergence $\theta_n\rightharpoonup \theta$ in $W_0^{1,p}(\Omega)$ implies the strong one $\zeta_n\to \zeta$ in $W_0^{1,p}(\Omega)$ as $n\uparrow \infty$. This shows that $\mathcal{F}$ is compact and concludes the proof.
\end{proof}


\section{Finite element approximation}
\label{sec:fem}

In this section, we devise a finite element discretization scheme for problem \eqref{eq:modelweak} and analyze convergence properties.

\subsection{Basic ingredients and assumptions}
We denote by $\mathscr{T}_h = \{ K\}$ a conforming partition of $\bar{\Omega}$ into closed simplices $K$ with size $h_K = \text{diam}(K)$. Define $h:=\max \{ h_K: K \in \mathscr{T}_h \}$. We denote by $\mathbb{T} = \{\mathscr{T}_h \}_{h>0}$ a collection of conforming and shape regular meshes $\mathscr{T}_h$.  We define $\mathscr{S}$ as the set of interelement boundaries $\gamma$ of $\T_h$. For $K \in \T_h$, let $\mathscr{S}^{}_K$ denote the subset of $\mathscr{S}$ that contains the sides in $\mathscr{S}$ which are sides of $K$. We denote by $\mathcal{N}_{\gamma}$, for $\gamma \in \mathscr{S}$, the subset of $\T_h$ that contains the two elements that have $\gamma$ as a side. In addition, we define \GC{the} \emph{stars} or \emph{patches}
\begin{equation}
\label{eq:patch}
\mathcal{N}_K= \cup \{K' \in \T_h: \mathscr{S}_K \cap \mathscr{S}_{K'} \neq \emptyset \},
\quad 
\mathcal{N}_K^*= \cup \{K' \in \T_h: K \cap {K'} \neq \emptyset \}.
\end{equation}
In an abuse of notation, below we denote by $\mathcal{N}_K$, $\mathcal{N}_K^*$, \GC{and $\mathcal{N}_{\gamma}$} either the sets themselves or the union of its elements.

Given a mesh $\mathscr{T}_{h} \in \mathbb{T}$, we define 
$
Y_{h}:=\{S_{h}\in C(\bar{\Omega}): S_{h}|_K\in \mathbb{P}_{1}(K) \ \forall K\in \T_{h} \ \textrm{and} \ v|_{\partial \Omega} = 0\}.
$
Notice that, for each $h>0$, $Y_{h} \subset W_0^{1,p\prime}(\Omega)\subset W_0^{1,p}(\Omega)$. 

We denote by $I_h$ the Lagrange interpolation operator and immediately observe that, since $W_0^{1,p\prime}(\Omega)\hookrightarrow C(\bar{\Omega})$, $I_h$ is well-defined as map from $W_0^{1,p\prime}(\Omega)$ into $Y_h$. The following error estimate is classical \cite[Theorem 1.103]{Guermond-Ern}: for each $K\in {\T}_h$,
\begin{equation}\label{eq:interp_Lagrange}
\|S-I_h S\|_{L^{p\prime}(K)}
\lesssim
h_K\|\nabla S\|_{\mathbf{L}^{p\prime}(K)} \quad \forall S\in W_0^{1,p\prime}(K).
\end{equation}
With this estimate at hand, a trace identity yields, for $\gamma \in \mathscr{S}$, the estimate
\begin{equation}\label{eq:interp_Lagrange_side}
 \|S-I_h S\|_{L^{p\prime}(\gamma)}
\lesssim
h_{\gamma}^\frac{1}{p}\|\nabla S\|_{\mathbf{L}^{p\prime}(\mathcal{N}_{\gamma})} \quad \forall S\in W_0^{1,p\prime}(\mathcal{N}_{\gamma}).
\end{equation}

Regarding the approximation of the Darcy--Forchheimer model \eqref{eq:modelweak}, we introduce
\begin{align*}
\mathbf{X}_{h} &:= \left\{ \mathbf{v}_{h}\in \mathbf{L}^2(\Omega): \mathbf{v}_{h}|_{K}\in \mathbb{P}_{0}(K)^d \ \forall K\in \T_{h} \right \},\\
M_{h}&:=\{\mathsf{q}_{h}\in C(\bar{\Omega}): q_{h}|_K\in \mathbb{P}_{1}(K) \ \forall K\in \T_{h} \}\cap L_0^2(\Omega),\\
\mathbf{V}_{h} & := \left\{\mathbf{v}_{h}\in \mathbf{X}_{h}: (\nabla\mathsf{q}_{h}, \mathbf{v}_h)_{\mathbf{L}^2(\Omega)}
=0 \,\, \forall \mathsf{q}_{h}\in M_h \right\}.
\end{align*}
The spaces $\mathbf{X}_{h}$ and $M_{h}$ satisfy the following discrete inf--sup condition: there exists $\beta>0$, independent of the discretization parameter $h$, such that
\begin{equation}\label{eq:discrete_inf_sup}
\inf_{\mathsf{q}_h\in M_{h}} \sup_{\mathbf{v}_h\in\mathbf{X}_{h}}\dfrac{\int_\Omega  \mathbf{v}_h \cdot \nabla\mathsf{q}_h \mathrm{d}\mathbf{x}}{\|\mathbf{v}_h\|_{\mathbf{L}^3(\Omega)}\| \nabla \mathsf{q}_h\|_{\mathbf{L}^{\frac{3}{2}}(\Omega)}}\geq \beta;
\end{equation}
\GC{see} \cite[Proposition 4.1]{MR3068725} and \GC{\cite[estimate (3.4)]{https://doi.org/10.48550/arxiv.2202.11643}}.

Since it will be useful later, we also introduce the orthogonal projection $\mathbf{\Pi}_h: \mathbf{L}^{1}(\Omega) \to \mathbf{X}_h$. The operator $\mathbf{\Pi}_h$ satisfies, for any real number $r$ such that $r\geq 1$,
\begin{equation}
\label{eq:interp_Pih}
\lim_{h\to 0}\mathbf{\Pi}_h\mathbf{v}= \mathbf{v} \text{ strongly in }\mathbf{L}^r(\Omega)
\quad
\forall 
\mathbf{v}\in \mathbf{L}^r(\Omega);
\end{equation}
\GC{see \cite[Section 3.3]{MR2425154}. With this convergence result at hand, we immediately conclude the following result \cite[Corollary 1.109]{Guermond-Ern}: If $\mathbf{v}\in \mathbf{L}^3(\Omega)$, then}
\begin{equation}
\label{eq:density_result}
\lim_{h \rightarrow 0} \left( \inf_{\mathbf{v}_h \in \mathbf{X}_h} \| \mathbf{v} - \mathbf{v}_h\|_{\mathbf{L}^3(\Omega)} \right)  = 0.
\end{equation}
\subsection{The discrete coupled problem}

We introduce the following finite element approximation of \eqref{eq:modelweak}: Find $(\mathbf{u}_h, \mathsf{p}_h, T_h)\in \mathbf{X}_{h}\times M_h\times Y_h$ such that
\begin{equation}\label{eq:model_discrete}
\left\{
\begin{array}{rcll}
\displaystyle\int_\Omega\left(\nu\mathbf{u}_h+|\mathbf{u}_h|\mathbf{u}_h+\nabla\mathsf{p}_h\right)\cdot \mathbf{v}_h\mathrm{d}\mathbf{x}& = & 	\displaystyle\int_\Omega\mathbf{f}(T_h)\cdot \mathbf{v}_h\mathrm{d}\mathbf{x}  &\forall \mathbf{v}_h\in \mathbf{X}_h,\\
\displaystyle\int_\Omega \nabla\mathsf{q}_h\cdot\mathbf{u}_h\mathrm{d}\mathbf{x} & = & 0  &\forall \mathsf{q}_h\in M_h,\\
\displaystyle\int_\Omega(\kappa(T_h)\nabla T_h\cdot \nabla S_h\!-\!T_h \mathbf{u}_h\cdot\nabla S_h)\mathrm{d}\mathbf{x} & = & \langle \delta_{z}, S_h\rangle  &\forall S_h\in Y_h.
\end{array}
\right.
\end{equation}
We recall that we are operating under the assumptions stated in section \ref{sec:main_assump}.

In what follows, we prove that, for every $h>0$, problem \eqref{eq:model_discrete} always has a solution and that, as $h \rightarrow 0$, the sequence $\{ (\mathbf{u}_h, \mathsf{p}_h, T_h) \}_{h>0}$ converges weakly, up to subsequences, to a solution of the coupled problem \eqref{eq:modelweak}.

\subsection{A discrete stationary heat equation}
In this section, we review a well-posedness result for a suitable discretization of the singular heat equation with convection \eqref{eq:heat}. To present such a result, we will make use of \GC{the} following assumption: If $\xi \in C^{0,1}(\bar \Omega)$ is a strictly positive and bounded function which satisfies $0<\xi_{-} \leq \xi(\mathbf{x}) \leq \xi_{+}$ for every $\mathbf{x} \in \Omega$, then there exist $h_{\star}>0$ such that for all $0< h \leq h_{\star}$ and $R_h \in Y_h$, the following discrete inf-sup conditions holds:
\begin{equation}\label{eq:discrete_inf_sup_Poisson}
\|\nabla R_{h} \|_{\mathbf{L}^{p}(\Omega)}\leq \tilde{C}_{\xi}\sup_{S_{h}\in  Y_h}\dfrac{\int_\Omega \xi \nabla R_{h} \cdot \nabla S_{h} \mathrm{d}\mathbf{x}}{\|\nabla S_{h}\|_{\mathbf{L}^{p'}(\Omega)}}.
\end{equation}
Here, $p$ is such that $\frac{2d}{d+1} -\epsilon< p < \frac{d}{d-1}$, for some $\epsilon >0$\GC{,} and $\tilde{C}_{\xi}$ denotes a positive constant that is independent of $h$.

Given $\xi \in C^{0,1}(\bar \Omega)$ as above and $\mathbf{v}\in \mathbf{L}^3(\Omega)$, we introduce the following discrete version of problem \eqref{eq:heat}: Find $T_{h}\in Y_{h}$ such that
\begin{equation}\label{eq:discrete_heat}
\displaystyle\int_\Omega \left(\xi\nabla T_{h}\cdot \nabla S_{h} - T_{h}\mathbf{v}\cdot\nabla S_{h}\right)\mathrm{d}\mathbf{x} =  \langle \delta_{z}, S_{h}\rangle \quad \forall S_{h}\in Y_h.
\end{equation}

Under a suitable smallness assumption on the convective term, problem \eqref{eq:discrete_heat} always has a discrete solution. In addition, discrete solutions are uniformly bounded with respect to the discretization parameter $h$.

\begin{proposition}[well-posedness]\label{prop:disc_wp_temp}
There exists $h_{\star}>0$ such that, if
\begin{equation}\label{eq:disc_condicion}
\tilde{C}_\xi C_e\|\mathbf{v}\|_{\mathbf{L}^3(\Omega)} \leq \alpha< 1,
\end{equation}
then the discrete problem \eqref{eq:discrete_heat} is well-posed for all $0< h \leq h_{\star}$ whenever $2d/(d+1) -\epsilon< p < d/(d-1)$, for some $\epsilon >0$. In particular, we have the stability estimate
\begin{equation}\label{eq:disc_estimatesheat}
\| \nabla T_{h}\|_{\mathbf{L}^{p}(\Omega)}\leq \tilde{C}_{\alpha} \|\delta_z\|_{W^{-1,p}(\Omega)}, \quad \tilde{C}_{\alpha}=\tfrac{\tilde{C}_\xi}{1-\alpha},
\quad
\tfrac{2d}{d+1} -\epsilon< p < \tfrac{d}{d-1}.
\end{equation}
\end{proposition}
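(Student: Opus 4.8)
The plan is to mimic the continuous argument from Proposition~\ref{prop:wp_temp}, which itself is an adaptation of \cite[Proposition 3.3]{ACFO:22}, but now working inside the finite-dimensional space $Y_h$ and using the discrete inf--sup condition \eqref{eq:discrete_inf_sup_Poisson} in place of its continuous counterpart \eqref{eq:inf_sup_Poisson}. Since \eqref{eq:discrete_heat} is a linear problem posed on the finite-dimensional space $Y_h$, existence of a solution will follow from uniqueness, so the real content is the stability estimate \eqref{eq:disc_estimatesheat}, from which injectivity of the associated square linear system — and hence well-posedness — is immediate.

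First I would fix $h_\star>0$ to be the threshold supplied by the discrete inf--sup hypothesis \eqref{eq:discrete_inf_sup_Poisson} with $\xi$ as in the statement, so that \eqref{eq:discrete_inf_sup_Poisson} holds for all $0<h\le h_\star$ and all $R_h\in Y_h$. Given $T_h\in Y_h$ solving \eqref{eq:discrete_heat}, apply \eqref{eq:discrete_inf_sup_Poisson} to $R_h=T_h$: then for any $S_h\in Y_h$,
\begin{equation*}
\int_\Omega \xi\nabla T_h\cdot\nabla S_h\,\mathrm{d}\mathbf{x}
= \langle \delta_z,S_h\rangle + \int_\Omega T_h\mathbf{v}\cdot\nabla S_h\,\mathrm{d}\mathbf{x}.
\end{equation*}
The first term is bounded by $\|\delta_z\|_{W^{-1,p}(\Omega)}\|\nabla S_h\|_{\mathbf{L}^{p'}(\Omega)}$ since $Y_h\subset W_0^{1,p'}(\Omega)$. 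The convective term is estimated exactly as in \eqref{eq:termconv}: H\"older's inequality together with $\mathbf{L}^{2p/(2-p)}(\Omega)\subset\mathbf{L}^{3p/(3-p)}(\Omega)$ (which is where the lower restriction $p>2d/(d+1)-\epsilon$ on the exponent enters, ensuring the relevant embedding exponents behave) and the Sobolev embedding $W_0^{1,p}(\Omega)\hookrightarrow L^{dp/(d-p)}(\Omega)$ with best constant $C_e$ give
\begin{equation*}
\left|\int_\Omega T_h\mathbf{v}\cdot\nabla S_h\,\mathrm{d}\mathbf{x}\right|
\le C_e\|\mathbf{v}\|_{\mathbf{L}^3(\Omega)}\|\nabla T_h\|_{\mathbf{L}^p(\Omega)}\|\nabla S_h\|_{\mathbf{L}^{p'}(\Omega)}.
\end{equation*}
Dividing by $\|\nabla S_h\|_{\mathbf{L}^{p'}(\Omega)}$ and taking the supremum over $S_h\in Y_h$, \eqref{eq:discrete_inf_sup_Poisson} yields
\begin{equation*}
\|\nabla T_h\|_{\mathbf{L}^p(\Omega)}
\le \tilde C_\xi\|\delta_z\|_{W^{-1,p}(\Omega)} + \tilde C_\xi C_e\|\mathbf{v}\|_{\mathbf{L}^3(\Omega)}\|\nabla T_h\|_{\mathbf{L}^p(\Omega)}.
\end{equation*}
Under the smallness assumption \eqref{eq:disc_condicion}, $\tilde C_\xi C_e\|\mathbf{v}\|_{\mathbf{L}^3(\Omega)}\le\alpha<1$, so the last term can be absorbed into the left-hand side, giving $\|\nabla T_h\|_{\mathbf{L}^p(\Omega)}\le \tfrac{\tilde C_\xi}{1-\alpha}\|\delta_z\|_{W^{-1,p}(\Omega)}$, which is precisely \eqref{eq:disc_estimatesheat} with $\tilde C_\alpha=\tilde C_\xi/(1-\alpha)$.

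Finally, well-posedness: \eqref{eq:discrete_heat} is a linear system on the finite-dimensional space $Y_h$, so it suffices to show the homogeneous problem (with $\delta_z$ replaced by $0$) has only the trivial solution. But the estimate just derived, applied with right-hand side zero, forces $\|\nabla T_h\|_{\mathbf{L}^p(\Omega)}=0$, hence $T_h=0$ by the Poincar\'e inequality in $W_0^{1,p}(\Omega)$. Thus the square linear system is invertible and \eqref{eq:discrete_heat} admits a unique solution for every $0<h\le h_\star$, with the stated stability bound. The only genuinely delicate point — and the one I would flag as the main obstacle — is the \emph{validity} of the discrete inf--sup condition \eqref{eq:discrete_inf_sup_Poisson} uniformly in $h$; but this is taken as a standing assumption here, so the argument above is then essentially a one-line absorption estimate, entirely parallel to the continuous case of Proposition~\ref{prop:wp_temp}.
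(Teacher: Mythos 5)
Your argument is correct and is essentially the paper's own proof, which simply invokes the discrete inf--sup condition \eqref{eq:discrete_inf_sup_Poisson} and refers to the discrete analogue in \cite[Proposition 4]{ACFO:22}: bound the convective term as in \eqref{eq:termconv}, absorb it under \eqref{eq:disc_condicion}, and deduce existence from uniqueness on the finite-dimensional space $Y_h$. The only slight imprecision is your aside attributing the lower restriction on $p$ to the embedding $\mathbf{L}^{2p/(2-p)}(\Omega)\subset\mathbf{L}^{3p/(3-p)}(\Omega)$ (which holds for all $p>1$ on bounded domains); that restriction is really inherited from the range of validity of the inf--sup condition, but this does not affect the proof.
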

\begin{proof}
With the discrete inf--sup condition \eqref{eq:discrete_inf_sup_Poisson} at hand, the proof follows the same arguments as the ones developed in the proof of \GC{\cite[Proposition 4]{ACFO:22}}.
\end{proof}

\subsection{Existence of solutions for the discrete coupled problem} 
We now show existence of solutions for \eqref{eq:model_discrete} via a fixed point argument. As in the continuous case, we introduce, for each $h > 0$, the map $\mathcal{F}_{h}: Y_h \rightarrow Y_h$ by $\theta_{h} \mapsto \mathcal{F}(\theta_{h})=\zeta_{h}$, where 
\begin{equation}
\zeta_{h} \in Y_h:
\quad
\displaystyle\int_\Omega(\kappa(\theta_h)\nabla \zeta_h\cdot \nabla S_h - \zeta_h \mathbf{u}_h(\theta_h)\cdot\nabla S_h)\mathrm{d}\mathbf{x}  =  \langle \delta_{z}, S_h\rangle \quad \forall S_h\in Y_h.
\label{eq:discrete_zeta_h}
\end{equation}
The definition of $\zeta_h$ compromises solving the following discretization of a Darcy--Forchheimer model: Find $(\mathbf{u}_{h}(\theta_h),\mathsf{p}_{h}(\theta_h))\in \mathbf{X}_{h} \times M_{h}$ such that
\begin{equation}\label{eq:discrete_darcy_system}
\begin{array}{rcll}
\displaystyle\int_\Omega \left( \nu\mathbf{u}_h(\theta_h)+|\mathbf{u}_h(\theta_h)|\mathbf{u}_h(\theta_h)+\nabla\mathsf{p}_h(\theta_h) \right)\cdot \mathbf{v}_h\mathrm{d}\mathbf{x}&=& 	\displaystyle\int_\Omega\mathbf{f}(\theta_h)\cdot \mathbf{v}_h\mathrm{d}\mathbf{x}, 
\\
\displaystyle\int_\Omega \nabla\mathsf{q}_h\cdot\mathbf{u}_h(\theta_h)\mathrm{d}\mathbf{x} &=& 0, \quad 
\end{array}
\end{equation}
for all $(\mathbf{v}_h,q_h) \in \mathbf{X}_{h} \times M_{h}$. In view of the assumptions on the problem data stated in section \ref{sec:main_assump}, the discrete problem \eqref{eq:discrete_darcy_system} admits a unique solution. In addition,
\begin{align}
\label{eq:discrete_bound_Darcy_F_velocity}
\|\mathbf{u}_h(\theta_h)\|_{\mathbf{L}^3(\Omega)}^2&\leq \|\mathbf{f}_0\|_{\mathbf{L}^{\frac{3}{2}}(\Omega)}+C_{\mathbf{f}}\mathfrak{C}_e\|\nabla T_h\|_{\mathbf{L}^{p}(\Omega)},\\
\label{eq:discrete_bound_Darcy_F_preassure}
\|\nabla\mathsf{p}_h(\theta_h)\|_{\mathbf{L}^{\frac{3}{2}}(\Omega)}&\leq \|\mathbf{f}_0\|_{\mathbf{L}^{\frac{3}{2}}(\Omega)}\!+\!C_{\mathbf{f}}\mathfrak{C}_e\|\nabla T_h\|_{\mathbf{L}^{p}(\Omega)}+\nu_{+}\|\mathbf{u}_h\|_{\mathbf{L}^{\frac{3}{2}}(\Omega)}+\|\mathbf{u}_h\|_{\mathbf{L}^{3}(\Omega)}^2,
\end{align}
where $\mathfrak{C}_e$ is the best constant in the Sobolev embedding $W_0^{1,p}(\Omega) \hookrightarrow L^{\frac{3}{2}}(\Omega)$ ($p>1$).

To present the following result, we introduce the ball
\begin{equation*}
\mathfrak{B}_{T}^{h}:=\left\{\theta_{h}\in Y_{h}:~ \|\nabla\theta_{h}\|_{\mathbf{L}^{p}(\Omega)}\leq \tilde{C}_{\frac{1}{2}}\|\delta_{z}\|_{W^{-1,p}(\Omega)}\right\},
\end{equation*}
where $\tilde{C}_{\frac{1}{2}}$ is defined as in \eqref{eq:disc_estimatesheat} with $\alpha=\frac{1}{2}$. As a final ingredient, we also introduce 
\begin{equation}\label{def:constant_C_tilde}
\tilde{\mathfrak{C}}:=(2C_{e}\tilde{C}_{\kappa})^{-1},
\end{equation}
where $C_e$ is defined as in \eqref{eq:termconv} and $\tilde{C}_{\kappa}$ corresponds to the constant involved in the discrete inf--sup condition \eqref{eq:discrete_inf_sup_Poisson} with $\xi$ being replaced by $\kappa$.

\begin{lemma}[$\mathcal{F}_{h}: \mathfrak{B}_{T}^{h} \rightarrow \mathfrak{B}_{T}^{h}$] 
\label{lemma:welldefined_Fh}
If $\frac{2d}{d+1} -\epsilon< p < \frac{d}{d-1}$, for some $\epsilon >0$, and 
 \[
  \|\mathbf{f}_0\|_{\mathbf{L}^{\frac{3}{2}}(\Omega) }
  +
  C_{\mathbf{f}}\mathfrak{C}_e \GC{\tilde{C}_{\frac{1}{2}}}\|\delta_z\|_{W^{-1,p}(\Omega)}\leq \tilde{\mathfrak{C}}^2,
 \]
then, there exists $h_{\star}>0$ such that the map $\mathcal{F}_{h}$ is well-defined on $\mathfrak{B}_{T}^{h}$ for all $0<h \leq h_{\star}$. In addition, $\mathcal{F}_h(\mathfrak{B}_{T}^{h})\subset \mathfrak{B}_{T}^{h}$.
\end{lemma}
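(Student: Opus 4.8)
The plan is to mirror the continuous argument used in Lemma~\ref{lemma:welldefined} (whose proof is deferred to \cite[Lemma 5]{ACFO:22}), replacing the continuous inf--sup condition \eqref{eq:inf_sup_Poisson} by its discrete counterpart \eqref{eq:discrete_inf_sup_Poisson}. First I would observe that the map $\mathcal{F}_h$ is well-defined: for a fixed $\theta_h \in \mathfrak{B}_T^h$, the Darcy--Forchheimer discretization \eqref{eq:discrete_darcy_system} admits a unique solution $(\mathbf{u}_h(\theta_h),\mathsf{p}_h(\theta_h))$, and then \eqref{eq:discrete_zeta_h} is a linear discrete problem for $\zeta_h$ governed by the bilinear form $(R_h,S_h)\mapsto \int_\Omega(\kappa(\theta_h)\nabla R_h\cdot\nabla S_h - R_h\mathbf{u}_h(\theta_h)\cdot\nabla S_h)\mathrm{d}\mathbf{x}$. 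Applying Proposition~\ref{prop:disc_wp_temp} with $\xi=\kappa(\theta_h)$ and $\mathbf{v}=\mathbf{u}_h(\theta_h)$ yields well-posedness of \eqref{eq:discrete_zeta_h} for $0<h\leq h_\star$, provided the smallness condition $\tilde{C}_\kappa C_e \|\mathbf{u}_h(\theta_h)\|_{\mathbf{L}^3(\Omega)} \leq \tfrac12$ holds; here one uses that $\kappa$ satisfies the uniform bounds of section~\ref{sec:main_assump}, so $\tilde C_\kappa$ can be taken independent of $\theta_h$.

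The core of the proof is verifying that smallness condition and simultaneously deriving the stated $\mathfrak{B}_T^h$-invariance. Using \eqref{eq:discrete_bound_Darcy_F_velocity} and the fact that $\theta_h \in \mathfrak{B}_T^h$, i.e.\ $\|\nabla\theta_h\|_{\mathbf{L}^p(\Omega)}\leq \tilde C_{\frac12}\|\delta_z\|_{W^{-1,p}(\Omega)}$, I would bound
\[
\|\mathbf{u}_h(\theta_h)\|_{\mathbf{L}^3(\Omega)}^2 \leq \|\mathbf{f}_0\|_{\mathbf{L}^{\frac32}(\Omega)} + C_{\mathbf{f}}\mathfrak{C}_e \tilde C_{\frac12}\|\delta_z\|_{W^{-1,p}(\Omega)} \leq \tilde{\mathfrak{C}}^2,
\]
by the hypothesis of the lemma, so that $\|\mathbf{u}_h(\theta_h)\|_{\mathbf{L}^3(\Omega)}\leq \tilde{\mathfrak{C}} = (2C_e\tilde C_\kappa)^{-1}$. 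This is exactly $\tilde C_\kappa C_e\|\mathbf{u}_h(\theta_h)\|_{\mathbf{L}^3(\Omega)}\leq \tfrac12 < 1$, which is the hypothesis \eqref{eq:disc_condicion} of Proposition~\ref{prop:disc_wp_temp} with $\alpha=\tfrac12$. Hence \eqref{eq:discrete_zeta_h} is well-posed and, by the stability bound \eqref{eq:disc_estimatesheat}, $\|\nabla\zeta_h\|_{\mathbf{L}^p(\Omega)}\leq \tilde C_{\frac12}\|\delta_z\|_{W^{-1,p}(\Omega)}$, i.e.\ $\zeta_h=\mathcal{F}_h(\theta_h)\in\mathfrak{B}_T^h$. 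Since $\theta_h\in\mathfrak{B}_T^h$ was arbitrary, $\mathcal{F}_h(\mathfrak{B}_T^h)\subset\mathfrak{B}_T^h$.

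A minor technical point worth addressing is that Proposition~\ref{prop:disc_wp_temp} is stated for $\xi\in C^{0,1}(\bar\Omega)$, whereas here $\xi=\kappa(\theta_h)$ with $\theta_h\in Y_h$ merely continuous and piecewise linear; since $\kappa\in C^{0,1}(\mathbb{R})$, the composition $\kappa(\theta_h)$ is Lipschitz on $\bar\Omega$ and satisfies $0<\kappa_-\leq\kappa(\theta_h)\leq\kappa_+$, so the discrete inf--sup constant $\tilde C_{\kappa(\theta_h)}$ depends only on $\kappa_\pm$, $p$, $\Omega$ and the shape-regularity of $\mathbb{T}$ — in particular not on the specific $\theta_h$ — and can be taken to be the single constant $\tilde C_\kappa$ in \eqref{def:constant_C_tilde}; likewise the threshold $h_\star$ can be chosen uniformly. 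The main obstacle, then, is not in the chain of estimates, which is routine, but in justifying this uniformity of $\tilde C_\kappa$ and $h_\star$ over the family $\{\kappa(\theta_h):\theta_h\in\mathfrak{B}_T^h\}$; this is precisely where one invokes that the assumed discrete inf--sup condition \eqref{eq:discrete_inf_sup_Poisson} is postulated to hold with constants depending on $\xi$ only through its uniform bounds and Lipschitz norm, exactly as in \cite[Proposition 4]{ACFO:22}. For brevity I would then simply note that the remaining details follow verbatim the argument of \cite[Lemma 5]{ACFO:22}.
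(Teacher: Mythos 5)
Your proposal is correct and follows essentially the same route as the paper: bound $\|\mathbf{u}_h(\theta_h)\|_{\mathbf{L}^3(\Omega)}^2$ via \eqref{eq:discrete_bound_Darcy_F_velocity} and the hypothesis to get $\tilde{C}_{\kappa}C_e\|\mathbf{u}_h(\theta_h)\|_{\mathbf{L}^3(\Omega)}\leq 1/2$, then invoke Proposition \ref{prop:disc_wp_temp} with $\alpha=1/2$ to obtain well-posedness of \eqref{eq:discrete_zeta_h} and the stability bound placing $\zeta_h$ in $\mathfrak{B}_T^h$. Your additional remark on the uniformity of $\tilde{C}_{\kappa}$ and $h_\star$ over the family $\{\kappa(\theta_h)\}$ is a legitimate point that the paper leaves implicit in its assumption \eqref{eq:discrete_inf_sup_Poisson}.
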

\begin{proof}
The proof follows similar arguments as the ones \GC{utilized} in the proof of Lemma \ref{lemma:welldefined}. Let $\theta_{h}\in \mathfrak{B}_{T}^{h}$. It is immediate that there exists a unique solution $(\mathbf{u}_{h}(\theta_h),\mathsf{p}_h(\theta_h))\in \mathbf{X}_{h}\times M_h$ to problem \eqref{eq:discrete_darcy_system}. In view of \eqref{eq:discrete_bound_Darcy_F_velocity}, \eqref{eq:disc_estimatesheat},
and \eqref{def:constant_C_tilde}, we conclude the following bound for the discrete velocity field $\mathbf{u}_{h}(\theta_h)$:
\begin{equation*}
\|\mathbf{u}_h(\theta_h)\|^2_{\mathbf{L}^3(\Omega)}
\leq\|\mathbf{f}_0\|_{\mathbf{L}^{\frac{3}{2}}(\Omega)}+C_{\mathbf{f}}\mathfrak{C}_e \GC{\tilde{C}_{\frac{1}{2}}}\|\delta_z\|_{W^{-1,p}(\Omega)}\leq \tilde{\mathfrak{C}}^2= (2C_{e}\tilde{C}_{\kappa})^{-2}.
\end{equation*}
Consequently, $\tilde{C}_{\kappa} C_{e}\|\mathbf{u}_{h}(\theta_h)\|_{\mathbf{L}^3(\Omega)} \leq 1/2$, i.e., $\mathbf{u}_{h}(\theta_h)$ satisfies \eqref{eq:disc_condicion} with $\alpha = 1/2$. We are thus in position to utilize the results of Proposition \ref{prop:disc_wp_temp} to obtain the existence of a unique $\zeta_h \in Y_{h}$ solving \eqref{eq:discrete_zeta_h}. In addition, $\zeta_{h} \in\mathfrak{B}^{h}_{T}$. This concludes the proof.
\end{proof}

We now study the existence of discrete solutions.

\begin{theorem}[existence]\label{thm:existence_discrete}
In the framework of Lemma \ref{lemma:welldefined_Fh}, there exists $h_{\star}>0$ such that \eqref{eq:model_discrete} admits a discrete solution $(\mathbf{u}_h,\mathsf{p}_h,T_h)\in \mathbf{X}_{h} \times M_{h}\times Y_{h}$ for all $0<h \leq h_{\star}$ whenever $2d/(d+1) -\epsilon< p < d/(d-1)$, for some $\epsilon >0$. In addition, $T_{h} \in \mathfrak{B}_{T}^{h}$.
\end{theorem}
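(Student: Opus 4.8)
The plan is to apply the Brouwer fixed point theorem to the map $\mathcal{F}_h$ restricted to the set $\mathfrak{B}_T^h$. By Lemma \ref{lemma:welldefined_Fh}, under the stated smallness hypothesis there exists $h_\star > 0$ such that, for every $0 < h \leq h_\star$, the map $\mathcal{F}_h$ is well-defined on $\mathfrak{B}_T^h$ and maps $\mathfrak{B}_T^h$ into itself. The set $\mathfrak{B}_T^h$ is a nonempty, closed, bounded, and convex subset of the finite-dimensional space $Y_h$; since $Y_h$ is finite-dimensional, $\mathfrak{B}_T^h$ is compact. Hence, once we verify that $\mathcal{F}_h$ is continuous on $\mathfrak{B}_T^h$, Brouwer's theorem yields a fixed point $T_h \in \mathfrak{B}_T^h$, i.e., $\mathcal{F}_h(T_h) = T_h$. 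Setting $(\mathbf{u}_h, \mathsf{p}_h) := (\mathbf{u}_h(T_h), \mathsf{p}_h(T_h))$ to be the unique solution of \eqref{eq:discrete_darcy_system} with $\theta_h$ replaced by $T_h$, and using the defining equation \eqref{eq:discrete_zeta_h} for $\zeta_h = \mathcal{F}_h(T_h) = T_h$, one sees immediately that the triple $(\mathbf{u}_h, \mathsf{p}_h, T_h)$ satisfies all three equations of \eqref{eq:model_discrete}. The membership $T_h \in \mathfrak{B}_T^h$ is then part of the conclusion.

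The remaining — and essentially only nontrivial — task is to prove the continuity of $\mathcal{F}_h$ on $\mathfrak{B}_T^h$. Here I would argue exactly as in the compactness proof of Theorem \ref{thm:existence}, but now there is no need to pass through weak convergence since everything lives in the finite-dimensional space $Y_h$: if $\{\theta_{h,n}\}_{n \geq 0} \subset \mathfrak{B}_T^h$ converges to $\theta_h$ in $Y_h$ (equivalently, in $W_0^{1,p}(\Omega)$, all norms on $Y_h$ being equivalent), then in particular $\theta_{h,n} \to \theta_h$ in $L^2(\Omega)$, so Lemma \ref{thm:convergence_seq_u_theta} gives $\mathbf{u}_h(\theta_{h,n}) \to \mathbf{u}_h(\theta_h)$ in $\mathbf{L}^3(\Omega)$ — wait: more precisely, one needs the discrete analogue, namely that the solution map $\theta_h \mapsto \mathbf{u}_h(\theta_h)$ of \eqref{eq:discrete_darcy_system} is continuous, which follows from the monotonicity property \eqref{eq:prop2_A} of $\mathcal{A}$ together with the Lipschitz dependence of $\mathbf{f}(\cdot)$ and $\nu(\cdot)$ on the argument, restricted to the (finite-dimensional) discrete divergence-free space $\mathbf{V}_h$. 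Writing $e_n := \zeta_{h,n} - \zeta_h$ with $\zeta_{h,n} := \mathcal{F}_h(\theta_{h,n})$, subtracting the two copies of \eqref{eq:discrete_zeta_h}, and testing with $S_h \in Y_h$, we get that $e_n$ solves a discrete problem of the form \eqref{eq:discrete_heat} with $\xi$ replaced by $\kappa(\theta_{h,n})$, $\mathbf{v}$ replaced by $\mathbf{u}_h(\theta_h)$ (which satisfies the smallness bound $\tilde{C}_\kappa C_e \|\mathbf{u}_h(\theta_h)\|_{\mathbf{L}^3(\Omega)} \leq 1/2$ by the argument in Lemma \ref{lemma:welldefined_Fh}), and right-hand side
\[
\langle g_n, S_h \rangle := \int_\Omega \big[ \zeta_{h,n}(\mathbf{u}_h(\theta_h) - \mathbf{u}_h(\theta_{h,n})) + (\kappa(\theta_{h,n}) - \kappa(\theta_h)) \nabla \zeta_h \big] \cdot \nabla S_h \, \mathrm{d}\mathbf{x}.
\]
Proposition \ref{prop:disc_wp_temp} with $\alpha = 1/2$ then gives $\|\nabla e_n\|_{\mathbf{L}^p(\Omega)} \leq \tilde{C}_{1/2} \|g_n\|_{W^{-1,p}(\Omega)}$, so it remains to show $\|g_n\|_{W^{-1,p}(\Omega)} \to 0$. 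The first term is bounded by $C_e \|\nabla \zeta_{h,n}\|_{\mathbf{L}^p(\Omega)} \|\mathbf{u}_h(\theta_h) - \mathbf{u}_h(\theta_{h,n})\|_{\mathbf{L}^3(\Omega)}$ via \eqref{eq:termconv}, and $\|\nabla \zeta_{h,n}\|_{\mathbf{L}^p(\Omega)}$ is uniformly bounded since $\zeta_{h,n} \in \mathfrak{B}_T^h$; hence this term vanishes by continuity of the discrete Darcy--Forchheimer solution map. For the second term, $\theta_{h,n} \to \theta_h$ in $L^r(\Omega)$ for suitable $r$ and the continuity and uniform boundedness of $\kappa$ give $\kappa(\theta_{h,n}) \to \kappa(\theta_h)$ in $L^r(\Omega)$, whence $(\kappa(\theta_{h,n}) - \kappa(\theta_h)) \nabla \zeta_h \to \mathbf{0}$ in $\mathbf{L}^p(\Omega)$ (here $\nabla \zeta_h$ is a fixed function). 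Thus $g_n \to 0$ in $W^{-1,p}(\Omega)$, so $\zeta_{h,n} \to \zeta_h$ in $Y_h$, establishing continuity of $\mathcal{F}_h$.

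The main obstacle, such as it is, is organizational rather than conceptual: one must be careful to record that the relevant smallness condition \eqref{eq:disc_condicion} with $\alpha = 1/2$ holds for the specific velocity $\mathbf{u}_h(\theta_h)$ (which it does, uniformly over $\theta_h \in \mathfrak{B}_T^h$, by the computation in the proof of Lemma \ref{lemma:welldefined_Fh}), so that Proposition \ref{prop:disc_wp_temp} may legitimately be invoked for the error equation; and one must not forget that the threshold $h_\star$ is the one furnished by Lemma \ref{lemma:welldefined_Fh} (equivalently, by the discrete inf--sup condition \eqref{eq:discrete_inf_sup_Poisson}), so that all assertions hold uniformly for $0 < h \leq h_\star$. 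Since the space $Y_h$ is finite-dimensional, no compactness argument beyond Brouwer is needed — in contrast with the continuous case, where the Leray--Schauder theorem and the Rellich--Kondrachov embedding did the heavy lifting.
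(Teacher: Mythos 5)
Your proposal is correct and follows essentially the same route as the paper: the paper's proof is a two-line appeal to Brouwer's fixed point theorem on the compact convex set $\mathfrak{B}_T^h\subset Y_h$, with the continuity of $\mathcal{F}_h$ justified by "similar arguments to the ones used in the proof of Theorem \ref{thm:existence}." You have simply written out in full the continuity argument (and the passage from the fixed point of $\mathcal{F}_h$ to a solution of the coupled discrete system) that the paper leaves implicit, including the correct observation that in the finite-dimensional setting no compactness beyond Brouwer is needed.
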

\begin{proof}
Since $\emptyset \neq \mathfrak{B}_{T}^{h} \subset Y_h$ is compact and convex and $\mathcal{F}_h$ is continuous, which follows from similar arguments to the ones used in the proof of Theorem \ref{thm:existence}, we apply Brouwer's fixed point theorem \cite[Theorem 3.2]{MR787404} to obtain the existence of a solution.
\end{proof}


\subsection{Convergence}

We present the following convergence result.

\begin{theorem}[convergence]\label{thm:convergence} 
Let $\mathfrak{C}$ and $\tilde{\mathfrak{C}}$ be the constants defined in \eqref{def:constant_C} and \eqref{def:constant_C_tilde}, respectively. Let $p$ be such that $2d/(d+1) -\epsilon< p < d/(d-1)$ for some $\epsilon >0$. If 
\begin{equation} 
\|\mathbf{f}_0\|_{\mathbf{L}^{\frac{3}{2}}(\Omega)}+C_{\mathbf{f}}\mathfrak{C}_e C_{\frac{1}{2}}\|\delta_z\|_{W^{-1,p}(\Omega)}\leq \GC{\mathfrak{C}^2},
\,
\GC{\|\mathbf{f}_0\|_{\mathbf{L}^{\frac{3}{2}}(\Omega)}+C_{\mathbf{f}}\mathfrak{C}_e \tilde{C_{\frac{1}{2}}}\|\delta_z\|_{W^{-1,p}(\Omega)} \leq \tilde{\mathfrak{C}}^2},
 \label{eq:assump_convergence}
\end{equation}
\GC{and, for $d = 3$, $\mathbf{u} \in \mathbf{L}^{3+\varepsilon}(\Omega)$}, where $\varepsilon>0$ is \GC{arbitrarily} small, then there exists $h_{\star} >0$ and $\{(\mathbf{u}_h, \mathsf{p}_h, T_h)\}_{0<h \leq h_{\star}}$, a nonrelabelared subsequence, such that $\mathbf{u}_h \to \mathbf{u}$ in $\mathbf{X}$, $\mathsf{p}_h \rightharpoonup \mathsf{p}$ in $M$, and $T_h \rightharpoonup T$ in $Y$ as $h\downarrow 0$. In addition, $T \in Y$ solves \eqref{eq:heat_decoupled} and $(\mathbf{u},\mathsf{p})\in \mathbf{X}\times M$ solves \eqref{eq:Darcy_decoupled}.
\end{theorem}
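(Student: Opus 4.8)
The plan is to combine uniform a priori estimates, weak compactness, a Minty--Browder argument for the monotone Darcy--Forchheimer operator $\mathcal{A}$, and a standard passage to the limit in the singular heat equation. First, by Theorem~\ref{thm:existence_discrete}, assumption \eqref{eq:assump_convergence} furnishes $h_{\star}>0$ such that for every $0<h\le h_{\star}$ there is a discrete solution $(\mathbf{u}_h,\mathsf{p}_h,T_h)$ with $T_h\in\mathfrak{B}_T^h$, so that $\|\nabla T_h\|_{\mathbf{L}^p(\Omega)}\le\tilde{C}_{1/2}\|\delta_z\|_{W^{-1,p}(\Omega)}$ uniformly in $h$. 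Inserting this bound into \eqref{eq:discrete_bound_Darcy_F_velocity}--\eqref{eq:discrete_bound_Darcy_F_preassure} and using a Poincar\'e--Wirtinger inequality for the mean--zero discrete pressure, I obtain uniform bounds for $\mathbf{u}_h$ in $\mathbf{L}^3(\Omega)$ and for $\mathsf{p}_h$ in $W^{1,3/2}(\Omega)\cap L^2_0(\Omega)$; in particular $\mathcal{A}(\mathbf{u}_h)$ is bounded in $\mathbf{L}^{3/2}(\Omega)$ by Proposition~\ref{eq:proposition_A}(i). I then extract a (nonrelabeled) subsequence along which $\mathbf{u}_h\rightharpoonup\mathbf{u}$ in $\mathbf{L}^3(\Omega)$, $\mathsf{p}_h\rightharpoonup\mathsf{p}$ in $W^{1,3/2}(\Omega)\cap L^2_0(\Omega)$, $T_h\rightharpoonup T$ in $W_0^{1,p}(\Omega)$, and $\mathcal{A}(\mathbf{u}_h)\rightharpoonup\boldsymbol{\chi}$ in $\mathbf{L}^{3/2}(\Omega)$; by the Rellich--Kondrachov theorem, $T_h\to T$ in $L^r(\Omega)$ for every $r<dp/(d-p)$ and, after a further extraction, a.e.\ in $\Omega$, whence $\mathbf{f}(T_h)\to\mathbf{f}(T)$ in $\mathbf{L}^{3/2}(\Omega)$ and $\kappa(T_h)\to\kappa(T)$ a.e., with $\kappa_-\le\kappa(T_h)\le\kappa_+$.

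The core is the identification of $\boldsymbol{\chi}$. Testing the discrete mass equation in \eqref{eq:model_discrete} with finite element approximations of an arbitrary $\mathsf{q}\in M$ gives, in the limit, $\int_\Omega\nabla\mathsf{q}\cdot\mathbf{u}\,\mathrm{d}\mathbf{x}=0$ for all $\mathsf{q}\in M$; testing the discrete momentum equation with $\mathbf{\Pi}_h\mathbf{v}$, $\mathbf{v}\in\mathbf{X}$, and using \eqref{eq:density_result} together with the weak convergences yields $\int_\Omega(\boldsymbol{\chi}+\nabla\mathsf{p})\cdot\mathbf{v}\,\mathrm{d}\mathbf{x}=\int_\Omega\mathbf{f}(T)\cdot\mathbf{v}\,\mathrm{d}\mathbf{x}$ for all $\mathbf{v}\in\mathbf{X}$. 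Since $\mathbf{u}_h\in\mathbf{V}_h$, choosing $\mathbf{v}_h=\mathbf{u}_h$ in \eqref{eq:model_discrete} cancels the pressure term, so $\int_\Omega\mathcal{A}(\mathbf{u}_h)\cdot\mathbf{u}_h\,\mathrm{d}\mathbf{x}=\int_\Omega\mathbf{f}(T_h)\cdot\mathbf{u}_h\,\mathrm{d}\mathbf{x}\to\int_\Omega\mathbf{f}(T)\cdot\mathbf{u}\,\mathrm{d}\mathbf{x}=\int_\Omega\boldsymbol{\chi}\cdot\mathbf{u}\,\mathrm{d}\mathbf{x}$, where the last identity uses the two relations just obtained together with $\int_\Omega\nabla\mathsf{p}\cdot\mathbf{u}\,\mathrm{d}\mathbf{x}=0$. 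With $\limsup_h\int_\Omega\mathcal{A}(\mathbf{u}_h)\cdot\mathbf{u}_h\,\mathrm{d}\mathbf{x}=\int_\Omega\boldsymbol{\chi}\cdot\mathbf{u}\,\mathrm{d}\mathbf{x}$ at hand, the Minty--Browder trick --- starting from $0\le\int_\Omega(\mathcal{A}(\mathbf{u}_h)-\mathcal{A}(\mathbf{w}))\cdot(\mathbf{u}_h-\mathbf{w})\,\mathrm{d}\mathbf{x}$ for arbitrary $\mathbf{w}\in\mathbf{L}^3(\Omega)$, passing to the limit, choosing $\mathbf{w}=\mathbf{u}-t\mathbf{z}$, dividing by $t$, letting $t\downarrow0$ and using the hemicontinuity in Proposition~\ref{eq:proposition_A}(iv) --- gives $\boldsymbol{\chi}=\mathcal{A}(\mathbf{u})$, so that $(\mathbf{u},\mathsf{p})$ solves \eqref{eq:Darcy_decoupled}. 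Finally, expanding $\int_\Omega(\mathcal{A}(\mathbf{u}_h)-\mathcal{A}(\mathbf{u}))\cdot(\mathbf{u}_h-\mathbf{u})\,\mathrm{d}\mathbf{x}$ and invoking the convergences above shows it tends to $0$, so the monotonicity estimate \eqref{eq:prop2_A} upgrades $\mathbf{u}_h\rightharpoonup\mathbf{u}$ to $\mathbf{u}_h\to\mathbf{u}$ in $\mathbf{L}^3(\Omega)$.

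It remains to pass to the limit in the heat equation. For $S\in W_0^{1,p'}(\Omega)$ I test the third equation of \eqref{eq:model_discrete} with $S_h=I_hS\in Y_h$, so $S_h\to S$ in $W^{1,p'}(\Omega)$ and, since $W_0^{1,p'}(\Omega)\hookrightarrow C(\bar\Omega)$, $\langle\delta_z,S_h\rangle=S_h(z)\to S(z)=\langle\delta_z,S\rangle$. In the diffusion term, dominated convergence gives $\kappa(T_h)\nabla S_h\to\kappa(T)\nabla S$ in $\mathbf{L}^{p'}(\Omega)$, which paired with $\nabla T_h\rightharpoonup\nabla T$ in $\mathbf{L}^p(\Omega)$ handles $\int_\Omega\kappa(T_h)\nabla T_h\cdot\nabla S_h\,\mathrm{d}\mathbf{x}$. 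For the convective term I split $T_h\mathbf{u}_h=T_h(\mathbf{u}_h-\mathbf{u})+T_h\mathbf{u}$ and use the H\"older bound \eqref{eq:termconv}: the first piece vanishes because $\mathbf{u}_h\to\mathbf{u}$ in $\mathbf{L}^3(\Omega)$ while $\{\nabla T_h\}$ and $\{\nabla S_h\}$ stay bounded; for the second piece, when $d=2$ the embedding $W_0^{1,p}(\Omega)\hookrightarrow\hookrightarrow L^{3p/(3-p)}(\Omega)$ is compact, giving $T_h\mathbf{u}\to T\mathbf{u}$ in the required norm, whereas when $d=3$ the Sobolev exponent $dp/(d-p)$ coincides with $3p/(3-p)$ and I invoke the standing hypothesis $\mathbf{u}\in\mathbf{L}^{3+\varepsilon}(\Omega)$ to gain a bit of integrability and still conclude $T_h\mathbf{u}\to T\mathbf{u}$ in $\mathbf{L}^p(\Omega)$. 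Passing to the limit then shows that $T\in W_0^{1,p}(\Omega)$ solves \eqref{eq:heat_decoupled} with the velocity $\mathbf{u}=\mathbf{u}(T)$ produced above, which is the assertion.

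The main obstacle is the identification $\boldsymbol{\chi}=\mathcal{A}(\mathbf{u})$: it hinges on $\mathbf{u}_h$ being admissible as a discrete test function (so that $\int_\Omega\mathcal{A}(\mathbf{u}_h)\cdot\mathbf{u}_h\,\mathrm{d}\mathbf{x}$ converges to $\int_\Omega\boldsymbol{\chi}\cdot\mathbf{u}\,\mathrm{d}\mathbf{x}$) and on the monotonicity and hemicontinuity of $\mathcal{A}$ from Proposition~\ref{eq:proposition_A}; the secondary difficulty, the endpoint integrability failure in the convective term when $d=3$, is precisely what the hypothesis $\mathbf{u}\in\mathbf{L}^{3+\varepsilon}(\Omega)$ is introduced to repair.
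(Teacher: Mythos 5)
Your proof is correct and rests on the same essential ingredients as the paper's --- uniform discrete stability, weak compactness, monotonicity and hemicontinuity of $\mathcal{A}$, and weak--strong pairing in the heat equation, with the hypothesis $\mathbf{u}\in\mathbf{L}^{3+\varepsilon}(\Omega)$ compensating for the failure of compactness of $W_0^{1,p}(\Omega)\hookrightarrow L^{3p/(3-p)}(\Omega)$ when $d=3$ --- but you organize the Darcy--Forchheimer limit differently. The paper works in the discretely divergence-free space: it derives the variational inequality $\int_\Omega\mathcal{A}(\mathbf{v})\cdot(\mathbf{u}-\mathbf{v})\,\mathrm{d}\mathbf{x}\le\int_\Omega\mathbf{f}(T)\cdot(\mathbf{u}-\mathbf{v})\,\mathrm{d}\mathbf{x}$ for $\mathbf{v}\in\mathbf{V}$ by testing with $\mathbf{u}_h-\boldsymbol{\Pi}_h\mathbf{v}$, invokes the Minty-type result \cite[Lemma 2.13]{MR3014456} on $\mathbf{V}$, reconstructs a pressure $\varrho$ from the inf--sup condition, proves strong convergence of $\mathbf{u}_h$ via $\boldsymbol{\Pi}_h(\mathbf{u})$, and only then identifies $\varrho=\mathsf{p}$. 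You instead extract a weak $\mathbf{L}^{3/2}$-limit $\boldsymbol{\chi}$ of $\mathcal{A}(\mathbf{u}_h)$, pass to the limit in the full momentum equation with the pressure retained, compute $\lim_h\int_\Omega\mathcal{A}(\mathbf{u}_h)\cdot\mathbf{u}_h\,\mathrm{d}\mathbf{x}=\int_\Omega\boldsymbol{\chi}\cdot\mathbf{u}\,\mathrm{d}\mathbf{x}$ from the energy identity, and run the classical Minty trick over all of $\mathbf{L}^3(\Omega)$; this dispenses with the separate pressure-identification step and yields strong convergence of $\mathbf{u}_h$ by applying \eqref{eq:prop2_A} directly with $\mathbf{u}$ rather than with $\boldsymbol{\Pi}_h(\mathbf{u})$, which is slightly cleaner. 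Two small points you should make explicit: the limit passage in the discrete mass equation requires density of $M_h$ in $M$ (so that $\nabla\mathsf{q}_h\to\nabla\mathsf{q}$ strongly in $\mathbf{L}^{3/2}(\Omega)$ against the weakly convergent $\mathbf{u}_h$); and when you take $S_h=I_hS$ for a general $S\in W_0^{1,p'}(\Omega)$ you need $I_hS\to S$ in $W^{1,p'}(\Omega)$, which requires the $W^{1,p'}$-stability of $I_h$ (available since $p'>d$) combined with density of smooth functions --- the paper sidesteps this by testing with $S\in C_0^{\infty}(\Omega)$ first. Neither point is a genuine gap.
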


\begin{proof}
The existence of a discrete solution $(\mathbf{u}_h,\mathsf{p}_h,T_h)$, for $h$ such that $0 < h \leq h_{\star}$, follows from Theorem \ref{thm:existence_discrete}. We now invoke the results of Lemma \ref{lemma:welldefined_Fh} and the discrete stability estimates
\eqref{eq:discrete_bound_Darcy_F_velocity} and \eqref{eq:discrete_bound_Darcy_F_preassure} to deduce that $\{(\mathbf{u}_h,\mathsf{p}_h,T_h)\}_{0<h \leq h_{\star}}$ is uniformly bounded in $\mathbf{X} \times M \times Y$ for $h$ such that $0 < h \leq h_{\star}$. Consequently, we conclude that, up to a subsequence if necessary, $(\mathbf{u}_h,\mathsf{p}_h,T_h) \rightharpoonup (\mathbf{u},\mathsf{p},T)$ in $\mathbf{X}\times M\times Y$, as $h\downarrow 0$, whenever $2d/(d+1) -\epsilon< p < d/(d-1)$, for some $\epsilon >0$.

We now follow \cite{MR2425154,MR4337455} and show, in several steps, that $(\mathbf{u},\mathsf{p},T) \in \mathbf{X}\times M \times Y$ solves system \eqref{eq:modelweak} or, equivalently, problems \eqref{eq:heat_decoupled} and \eqref{eq:Darcy_decoupled}. We begin the analysis by utilizing the monotonicity property \eqref{eq:prop2_A} of $\mathcal{A}$ to obtain
\begin{align}\label{eq:monotonicity_discrete}
	\int_\Omega \left(\mathcal{A}(\mathbf{u}_h)-\mathcal{A}(\mathbf{v}_h)\right)\cdot (\mathbf{u}_h-\mathbf{v}_h)\mathrm{d}\mathbf{x}\geq 0 \quad \forall \mathbf{v}_h \in \mathbf{V}_h.
\end{align}
Set $\mathbf{u}_h-\mathbf{v}_h \in \mathbf{V}_h$ as a test function in the first equation of problem \eqref{eq:model_discrete} and utilize \eqref{eq:monotonicity_discrete} to arrive at
\begin{align}\label{eq:convergence_monotonicity}
	\int_\Omega \mathcal{A}(\mathbf{v}_h)\cdot (\mathbf{u}_h-\mathbf{v}_h)\mathrm{d}\mathbf{x}\leq \int_\Omega \mathbf{f}(T_h)\cdot (\mathbf{u}_h-\mathbf{v}_h)\mathrm{d}\mathbf{x} \quad \forall \mathbf{v}_h \in \mathbf{V}_h.
\end{align}
Let us now take the limit as $h \downarrow 0$ in the previous inequality. Set $\mathbf{v}_h = \boldsymbol{\Pi}_{h}(\mathbf{v}) \in \mathbf{V}_h$, for an arbitrary $\mathbf{v} \in \mathbf{V}$. Invoke \eqref{eq:prop5_A} and the convergence property \eqref{eq:interp_Pih} to immediately deduce that $\|\mathcal{A}(\mathbf{v}_h)-\mathcal{A}(\mathbf{v})\|_{\mathbf{L}^{3/2}(\Omega)}\to 0$ as $h\downarrow 0$. Since $\mathbf{u}_h-\mathbf{v}_h\ \rightharpoonup  \mathbf{u}-\mathbf{v}$ in $\mathbf{L}^3(\Omega)$ as $h\downarrow 0$, we take the limit as $h\downarrow 0$ in \eqref{eq:convergence_monotonicity} to obtain, for an \GC{arbitrary} $\mathbf{v}\in \mathbf{V}$,
\begin{align}\label{eq:convergence_monotonicity_limit}
	\int_\Omega \mathcal{A}(\mathbf{v})\cdot (\mathbf{u}-\mathbf{v})\mathrm{d}\mathbf{x}\leq \int_\Omega \mathbf{f}(T)\cdot (\mathbf{u}-\mathbf{v})\mathrm{d}\mathbf{x},
\end{align}
where we have also used the compactness of the embedding $W_0^{1,p}(\Omega)\hookrightarrow L^{q}(\Omega)$ for $q<q_{\star}=dp/(d-p)$ \cite[Theorem 6.3, Part \textbf{I}]{MR2424078}, combined with the fact that $q_{\star}>3/2$, to deduce that $\|\mathbf{f}(T)-\mathbf{f}(T_h)\|_{\mathbf{L}^{3/2}(\Omega)}\leq C_{\mathbf{f}}\|T-T_h\|_{L^{3/2}(\Omega)}\to 0$ as $h\downarrow 0$.
With the inequality \eqref{eq:convergence_monotonicity_limit} at hand, we invoke \cite[Lemma 2.13]{MR3014456} to conclude that
\begin{align}
	\int_\Omega \mathcal{A}(\mathbf{u})\cdot \mathbf{v}\mathrm{d}\mathbf{x}= \int_\Omega \mathbf{f}(T)\cdot \mathbf{v}\mathrm{d}\mathbf{x}\quad\forall \mathbf{v}\in \mathbf{V}.
\end{align}
The inf--sup condition \eqref{eq:infsup} imply the existence of a unique $\varrho \in M$ such that $(\mathbf{u},\varrho) \in \mathbf{X} \times M$ solves problem \eqref{eq:Darcy_decoupled} \cite[Proposition 2]{MR2425154}.

We now show that $\mathbf{u}_h$ converges strongly to $\mathbf{u}$ in $\mathbf{L}^3(\Omega)$ as $h \downarrow 0$. To accomplish this task, we first consider the monotonicity property \eqref{eq:prop2_A} with $\mathbf{v}=\mathbf{u}_h$ and $\mathbf{w}=\mathbf{\Pi}_h(\mathbf{u})$: 
\begin{align*}
\|\mathbf{u}_h-\mathbf{\Pi}_h(\mathbf{u})\|_{\mathbf{L}^3(\Omega)}^3\lesssim\int_{\Omega}(\mathcal{A}(\mathbf{u}_h)-\mathcal{A}(\mathbf{\Pi}_h(\mathbf{u})))\cdot \left(\mathbf{u}_h-\mathbf{\Pi}_h(\mathbf{u})\right)  \mathrm{d}\mathbf{x}.
\end{align*}
Set $\mathbf{v}_h = \mathbf{u}_h-\mathbf{\Pi}_h(\mathbf{u})$ in the first equation of problem \eqref{eq:model_discrete} to thus obtain
\begin{align}\label{eq:eqauxconvergence}
\|\mathbf{u}_h\!-\!\mathbf{\Pi}_h(\mathbf{u})\|_{\mathbf{L}^3(\Omega)}^3\!\lesssim\!\int_{\Omega}\mathbf{f}(T_h)\GC{\cdot}(\mathbf{u}_h\!-\!\mathbf{\Pi}_h(\mathbf{u})) \mathrm{d}\mathbf{x}\!-\!\int_{\Omega}\mathcal{A}(\mathbf{\Pi}_h(\mathbf{u}))\!\cdot\!\left(\mathbf{u}_h\!-\!\mathbf{\Pi}_h(\mathbf{u})\right)  \mathrm{d}\mathbf{x},
\end{align}
upon utilizing that $\int_{\Omega} \nabla \mathsf{p}_h \cdot (\mathbf{u}_h-\mathbf{\Pi}_h(\mathbf{u}))\mathrm{d}\mathbf{x} = 0$. We now utilize the fact that $(\mathbf{u},\varrho) \in \mathbf{X} \times M$ solves problem \eqref{eq:Darcy_decoupled}
to arrive at
\begin{multline}\label{eq:eqaux02}
\|\mathbf{u}_h-\mathbf{\Pi}_h(\mathbf{u})\|_{\mathbf{L}^3(\Omega)}^3\!\lesssim
\left| 
\int_{\Omega} 
\left( \mathcal{A}(\mathbf{u}) - \mathcal{A}(\mathbf{\Pi}_h(\mathbf{u})) \right) \cdot \left(\mathbf{u}_h\!-\!\mathbf{\Pi}_h(\mathbf{u})\right) \mathrm{d} \mathbf{x}
\right|
\\
\left| \int_{\Omega} (\GC{\mathbf{f}}(T_h) - \GC{\mathbf{f}}(T)) \cdot ( \mathbf{u}_h\!-\!\mathbf{\Pi}_h(\mathbf{u}) ) \mathrm{d}\mathbf{x} \right|
+ \left| \int \nabla \varrho \cdot ( \mathbf{u}_h\!-\!\mathbf{\Pi}_h(\mathbf{u}) ) \mathrm{d}\mathbf{x} \right| \rightarrow 0,
\quad
h \downarrow 0,
\end{multline}
upon utilizing that $\GC{\mathbf{f}}(T_h) \rightarrow \GC{\mathbf{f}}(T)$ and $\mathcal{A}(\mathbf{\Pi}_h(\mathbf{u})) \rightarrow \mathcal{A}(\mathbf{u})$ in $\mathbf{L}^{\frac{3}{2}}(\Omega)$, as $h \downarrow 0$, and that
$\mathbf{u}_h-\mathbf{\Pi}_h(\mathbf{u}) \rightharpoonup 0$ in 
$\mathbf{L}^{3}(\Omega)$ as $h \downarrow 0$. A basic application of a triangle inequality thus implies that $\mathbf{u}_h\to\mathbf{u}$ in $\mathbf{L}^3(\Omega)$ as $h\downarrow 0$.

We now show that $\varrho = \mathsf{p}$. To accomplish this task, we subtract the first equation of the problem that $(\mathbf{u},\varrho)$ solves (problem \eqref{eq:Darcy_decoupled} with $\mathbf{u}(T)$ and $\GC{\mathsf{p}}(T)$ being replaced by $\mathbf{u}$ and $\varrho$, respectively) with \GC{$\boldsymbol{\Pi}_{h}(\mathbf{v})$, for an arbitrary function $\mathbf{v}$, as a test function}, from the first equation of \eqref{eq:model_discrete} to obtain
\begin{align}
\int_{\Omega} \left(\mathcal{A}(\mathbf{u}_h)-\mathcal{A}(\mathbf{u})\right)\cdot\mathbf{v}_h \mathrm{d}\mathbf{x} =\int_{\Omega}(\mathbf{f}(T_h)-\mathbf{f}(T))\cdot \mathbf{v}_h \mathrm{d}\mathbf{x}+\int_{\Omega}\nabla (\varrho-\mathsf{p}_h)\cdot \mathbf{v}_h\mathrm{d}\mathbf{x}.
\end{align}
Since $\mathbf{u}_h \to \mathbf{u}$ in $\mathbf{L}^3(\Omega)$, we deduce, in view of \eqref{eq:prop5_A}, that $\mathcal{A}(\mathbf{u}_h) \rightarrow \mathcal{A}(\mathbf{u})$ in $\mathbf{L}^{\frac{3}{2}}(\Omega)$ as $h\downarrow 0$. This property combined with the fact that $\mathbf{f}(T_h)\to \mathbf{f}(T)$ in $\mathbf{L}^{\frac{3}{2}}(\Omega)$ as $h\downarrow 0$ allow us to deduce that 
$
 (\nabla(\varrho - \mathsf{p}_h), \boldsymbol{\Pi}_h(\mathbf{v}))_{\mathbf{L}^2(\Omega)} \rightarrow (\nabla(\varrho - \mathsf{p}), \mathbf{v})_{\mathbf{L}^2(\Omega)} = 0,
$
as $h \downarrow 0$, for every $\mathbf{v} \in \mathbf{L}^3(\Omega)$. We have thus proved that $\mathsf{p} = \varrho$. Consequently, the limit point $(\mathbf{u},\mathsf{p})\in\mathbf{X}\times M$ solves problem \eqref{eq:Darcy_decoupled}.

It remains to prove that $T\in W_0^{1,p}(\Omega)$ solves \eqref{eq:heat_decoupled} with $\mathbf{u}(T) = \mathbf{u}$. To accomplish this task, we let $S\in C_0^{\infty}(\Omega)$ \GC{and set $S_h=I_h S \in Y_h$.} Utilize H\"older's inequality, the assumptions on $\kappa$, the Lebesgue dominated convergence \GC{theorem}, and standard properties of the interpolation operator $I_h$ to obtain
\begin{multline*}
\left|\int_\Omega (\kappa(T)\nabla T\cdot \nabla S - \kappa(T_h)\nabla T_h\cdot \nabla S_h)\mathrm{d}\mathbf{x}\right| 
\leq
\left|\int_\Omega (\kappa(T) - \kappa(T_h))\nabla T\cdot \nabla S\mathrm{d}\mathbf{x}\right| \\
+\left|\int_\Omega \kappa(T_h)\nabla(T - T_h)\cdot \nabla S\mathrm{d}\mathbf{x}\right|+\left|\int_\Omega \kappa(T_h)\nabla T_h\cdot \nabla (S-S_h)\mathrm{d}\mathbf{x}\right| \rightarrow 0, \text{ as } h\downarrow 0.
\end{multline*}
Finally, we prove that $\int_\Omega T_h \mathbf{u}_h \cdot \nabla S_h\mathrm{d}\mathbf{x} \to  \int_\Omega T \mathbf{u} \cdot \nabla S\mathrm{d}\mathbf{x}$ as $h\downarrow 0$. Observe that
\begin{multline*}
\left|\int_\Omega (T_h \mathbf{u}_h \cdot \nabla S_h- T \mathbf{u} \cdot \nabla S) \mathrm{d}\mathbf{x}\right|\leq \left|\int_\Omega (T-T_h) \mathbf{u}\cdot \nabla S\mathrm{d}\mathbf{x}\right|\\
+
\left|\int_\Omega T_h (\mathbf{u}-\mathbf{u}_h)\cdot \nabla S\mathrm{d}\mathbf{x}\right|
+
\left|\int_\Omega T_h \mathbf{u}_h\cdot \nabla( S- S_h)\mathrm{d}\mathbf{x}\right|=:\mathrm{I}_h+\mathrm{II}_h+\mathrm{III}_h.
\end{multline*}
The fact that $\mathrm{II}_h \rightarrow 0$ as $h \downarrow 0$ follows from \eqref{eq:termconv} and $\mathbf{u}_h \rightarrow \mathbf{u}$ in $\mathbf{L}^{3}(\Omega)$ as $h \downarrow 0$. To prove that $\mathrm{III}_h \rightarrow 0$ as $h \downarrow 0$, we utilize \eqref{eq:termconv} again and standard properties for $I_h$. Finally, we control $\mathrm{I}_h$ in view of $T_h \rightharpoonup T$ in $W_0^{1,p}(\Omega)$, the compact embedding $W_0^{1,p}(\Omega)\hookrightarrow L^{q}(\Omega)$ for $q<q_{\star}=dp/(d-p)$, H\"older inequality, and the extra assumption on $\mathbf{u}$ \GC{in three dimensions}. We have thus proved that $T\in W_0^{1,p}(\Omega)$ solves \eqref{eq:heat_decoupled} with $\mathbf{u}=\mathbf{u}(T)$. This concludes the proof.
\end{proof}

\section{A posteriori error analysis}
\label{sec:a_posteriori_anal}

In this section, we design and analyze an a posteriori error estimator for the finite element \GC{approximation}  \eqref{eq:model_discrete} of the coupled system \eqref{eq:modelweak}. In particular, we derive a global \GC{reliability} estimate for the devised error estimator and investigate local efficiency bounds. \GC{To achieve} these goals, we shall assume that, in addition to the assumptions stated in \S\ref{sec:main_assump}, $\kappa$ is a positive constant.

In what follows, $(\mathbf{u},\mathsf{p},T) \in \mathbf{X}\times M\times Y$ denotes a solution to \eqref{eq:modelweak}. The existence of such a solution follows from Theorem \ref{thm:existence}; $p$ is such that $2d/(d+1)-\epsilon < p <d/(d-1)$ for some $\epsilon >0$. In addition, $(\mathbf{u}_h,\mathsf{p}_h,T_h)\in\mathbf{X}_{h}\times M_{h}\times Y_{h}$ denotes a solution to \eqref{eq:model_discrete}, whose existence, for $0 < h \leq h_{\star}$ and $p$ as above, is guaranteed by Theorem \ref{thm:existence_discrete}. 

Within the a posteriori error analysis that follows, since we will not be dealing with uniform refinement, the parameter $h$ does not bear the meaning of a mesh size. It can thus be thought as $h = 1/k$, where $k\in\mathbb{N}$ is the index set in a sequence of refinements of an initial mesh or partition $\mathscr{T}_{0}$.

\subsection{A posteriori error estimator} 
The proposed error estimator is decomposed as the sum of two contributions: a contribution related to the discretization of the Darcy--Forchheimer model and another one associated to the discretization of the stationary heat equation. To present such an error estimator, let us first introduce some notation. Let $\mathbf{w}_{h}$ be a discrete tensor valued function and let $\gamma \in \mathscr{S}$. We define the \emph{jump} or \emph{interelement residual} of $\mathbf{w}_{h}$ on $\gamma$ by
$
\llbracket \mathbf{w}_{h} \cdot \mathbf{n} \rrbracket:= \mathbf{w}_{h} \cdot \mathbf{n}^{+} |^{}_{K^{+}} + \mathbf{w}_{h} \cdot \mathbf{n}^{-}  |^{}_{K^{-}},
$
where $\mathbf{n}^{+}$ and $\mathbf{n}^{-}$ denote the unit normals on $\gamma$ pointing towards $K^{+}$ and $K^{-}$, respectively; $K^{+}$, $K^{-} \in \T_h$ are such that $K^{+} \neq K^{-}$ and $\partial K^{+} \cap \partial K^{-} = \gamma$.

\subsubsection{Heat equation with convection: local indicators and estimator}
We define, for an element $K \in \mathscr{T}_h$ and an internal side $\GC{\gamma} \in \mathscr{S}$, the \emph{element residual} $\mathcal{R}_K$ and the \emph{interelement residual} $\mathcal{J}_{\gamma}$ as
\begin{equation}
\mathcal{R}_K:= -\nabla T_h \cdot \mathbf{u}_h |_{K},
\qquad
\mathcal{J}_{\gamma}:= \llbracket(\kappa\nabla T_h -T_h \mathbf{u}_h)\cdot \mathbf{n} \rrbracket.
\label{eq:residual_heat}
\end{equation}
We define an \emph{element indicator} $\mathcal{E}_{p,K}$ associated to the discretization of the heat equation on the basis of three scenarios. First, if $z \in K$ and $z$ is not a vertex, then
\begin{equation}
\label{eq:local_indicator_temp_I}
\mathcal{E}_{p,K}:=
\left[
h_K^{d+p(1-d)} 
+
h_K^p \| \mathcal{R}_K\|_{L^p(K)}^p 
+
h_K \| \mathcal{J}_{\gamma} \|_{L^p(\partial K \setminus \partial \Omega)}^p
\right]^{\frac{1}{p}}.
\end{equation}
Second, if $z \in K$ and $z$ is a vertex of $K$, then
\begin{equation}\label{eq:local_indicator_temp_II}
\mathcal{E}_{p,K}:=
\left[
h_K^p \| \mathcal{R}_K  \|_{L^p(K)}^p 
+
h_K \|
\mathcal{J}_{\gamma}
\|_{L^p(\partial K \setminus \partial \Omega)}^p
\right]^{\frac{1}{p}}.
\end{equation}
Third, if $z \notin K$, then the indicator $\mathcal{E}_{p,K}$ is defined as in \eqref{eq:local_indicator_temp_II}. 

The following comments point in the direction of creating some insight into the definition of the local indicators \eqref{eq:local_indicator_temp_I} and \eqref{eq:local_indicator_temp_II}. First, we recall that we consider our elements $K$ to be closed sets. Second, the Lagrange interpolation operator $I_h$ is well-defined over the space $W_0^{1,p\prime}(\Omega)$ with $p\prime>d$. Third, since $I_h$ is constructed by matching the point values at the Lagrange nodes, we have the basic property
\[
(S -I_{h}S)(\GC{\texttt{v}}) = 0
\quad
\forall S \in W_0^{1,p\prime}(\Omega), \, p\prime>d, \forall K \in \T_h.
\]
Here, $\GC{\texttt{v}}$ denotes a vertex of $K$. In particular, the third observation explains the discrepancy between definitions \eqref{eq:local_indicator_temp_I} and \eqref{eq:local_indicator_temp_II}.

With the previous local indicators at hand, we define the error estimator
\begin{equation}\label{eq:error_estimator_heat}
\mathcal{E}_{p,\T}:=\left( \sum_{K\in\mathscr{T}_h} \mathcal{E}_{p,K}^p\right)^{\frac{1}{p}},\qquad p<\tfrac{d}{d-1}.
\end{equation}

\subsubsection{A Darcy--Forchheimer model: local indicators and estimator}

Let $K \in \T_h$ be an element and let $\gamma \in \mathscr{S}$ be an internal side. We define the \emph{element residual} $\mathfrak{R}_{K}$ and the \emph{interelement residual} $\mathfrak{J}_{\gamma}$ as
\begin{align}
&\mathfrak{R}_{K} := (\mathbf{f}_0+\mathbf{f}_1(T_h)-\nu\mathbf{u}_h-|\mathbf{u}_h|\mathbf{u}_h-\nabla \mathsf{p}_h)|_{K}, 
&& \mathfrak{J}_{\gamma}:= \llbracket \mathbf{u}_h\cdot\mathbf{n}\rrbracket.
\label{eq:residuals_Darcy_1}
\end{align}
With $\mathfrak{R}_{K}$ and $\mathfrak{J}_{\gamma}$ at hand, we define the \emph{element indicator} and \emph{error estimator}
\begin{equation}\label{eq:local_indicator_darcy_I}
\mathfrak{E}_{K}: =\left[\|\mathfrak{R}_{K}\|_{\mathbf{L}^2(K)}^2 + 
h_{K}^{\frac{2}{3}}\|\mathfrak{J}_{\gamma}\|_{L^3(\partial K\setminus \partial \Omega)}^2
\right]^{\frac{1}{2}},
\qquad
\mathfrak{E}_{\T}:=\left( \sum_{K\in\mathscr{T}_h} \mathfrak{E}_{K}^2\right)^{\frac{1}{2}}.
\end{equation}

With all these ingredients at hand, we define the total a posteriori error estimator
\begin{equation}\label{def:total_error_estimator}
\mathsf{E}_{\T}:=\mathcal{E}_{p,\T}+\mathfrak{E}_{\T}.
\end{equation}

\subsection{Reliability analysis}
\label{sec:reliability_estimates}
We begin our analysis by defining the temperature error $e_{T} := T-T_h$, the velocity error $\mathbf{e}_{\mathbf{u}} := \mathbf{u}-\mathbf{u}_h$, and the pressure error $e_{\mathsf{p}} := \mathsf{p}-\mathsf{p}_h$.

Let $\mathsf{q}\in M$ and $\mathsf{q}_h\in M_h$. Since $(\mathbf{u},\mathsf{p},T)$ and $(\mathbf{u}_h,\mathsf{p}_h,T_h)$ solve \eqref{eq:modelweak} and \eqref{eq:model_discrete}, respectively, and $\int_{\Omega} \nabla \mathsf{q}_h \cdot \mathbf{u}_h \mathrm{d}\mathbf{x} = 0$, standard computations based on a integration by parts argument reveal that
\begin{multline}
\int_\Omega\nabla \mathsf{q}\cdot \mathbf{e}_\mathbf{u}\mathrm{d}\mathbf{x}=\int_{\Omega}\nabla \mathsf{q}\cdot \mathbf{u}\mathrm{d}\mathbf{x}-\int_{\Omega}\nabla \mathsf{q}_h\cdot \mathbf{u}_h\mathrm{d}\mathbf{x}-\int_\Omega\nabla (\mathsf{q}-\mathsf{q}_h)\cdot \mathbf{u}_h\mathrm{d}\mathbf{x}\\
=\sum_{K\in\T_h}\int_{K}(\mathsf{q}-\mathsf{q}_h)\text{div}\mathbf{u}_h\mathrm{d}\mathbf{x}+\int_{\partial K}(\mathsf{q}-\mathsf{q}_h)\mathbf{u}_h\cdot\mathbf{n}\mathrm{d}\mathbf{s}
=
\sum_{\gamma\in \mathscr{S}}
\int_\gamma \llbracket \mathbf{u}_h\cdot\mathbf{n}\rrbracket  (\mathsf{q}-\mathsf{q}_h)\mathrm{d}\mathbf{s}.
\label{eq:lemma_00}
\end{multline}

Let us now define the following problem: Find
$\mathbf{w}\in \mathbf{X}$ such that 
\begin{equation}\label{eq:aux_problem_apost}
\int_{\Omega}\nabla \mathsf{q}\cdot \mathbf{w} \mathrm{d}\mathbf{x} = F(\mathsf{q})
~\forall q \in M,
~~~
F: M \rightarrow \mathbb{R},
~~~
F(\mathsf{q}):=\sum_{\gamma\in \mathscr{S}}\int_{\gamma}\llbracket \mathbf{u}_h\cdot\mathbf{n}\rrbracket (\mathsf{q}-R_h \mathsf{q})\mathrm{d}\mathbf{s}.
\end{equation}
Here, $R_h$ denotes the Cl\'ement interpolation operator. We recall that, for each $K\in\T_h$ and $\gamma \in \mathscr{S}$, the operator $R_h$ satisfies the following error estimates \cite[Lemma 1.127]{Guermond-Ern}
\begin{align}
  \label{eq:interp_Rh_i}	
   \|\mathsf{q}-R_h \mathsf{q}\|_{\mathbf{L}^{\frac{3}{2}}(K)}
 &\lesssim
 h_{K}\|\mathsf{q}\|_{W^{1,\frac{3}{2}}(\mathcal{N}^{*}_{K})} \quad \forall \mathsf{q}\in W^{1,\frac{3}{2}}(\mathcal{N}^{*}_{K}),\\
  \label{eq:interp_Rh_ii}
   \|\mathsf{q}-R_h \mathsf{q}\|_{\mathbf{L}^{\frac{3}{2}}(\gamma)}
 &\lesssim
 h_{\gamma}^{1/3}\|\mathsf{q}\|_{W^{1,\frac{3}{2}}(\mathcal{N}^*_{\gamma})} \quad \forall \mathsf{q}\in W^{1,\frac{3}{2}}(\mathcal{N}^*_{\gamma}).
 \end{align}
The set $\mathcal{N}^{*}_{K}$ is defined as in \eqref{eq:patch} and $\mathcal{N}^*_{\gamma}$ corresponds to the set of elements in $\mathscr{T}_h$ sharing at least one vertex with $\gamma$.

The following \GC{result} is instrumental to perform a reliability analysis.
\begin{lemma}[auxiliary result]\label{lemma:01_aposteriori} There exists $\mathbf{v}_r \in \mathbf{X}$ which satisfies  
\eqref{eq:aux_problem_apost} and 
\begin{equation}
\|\mathbf{v}_r\|_{\mathbf{L}^3(\Omega)}\lesssim \sum_{\gamma\in\mathscr{S}}
h_\gamma^{1/3}\|\llbracket \mathbf{u}_h\cdot\mathbf{n}\rrbracket \|_{L^3(\gamma)}.
\label{eq:bound_vr}
\end{equation}
\end{lemma}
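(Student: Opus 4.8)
The goal is to produce $\mathbf{v}_r\in\mathbf{X}=\mathbf{L}^3(\Omega)$ solving the auxiliary problem \eqref{eq:aux_problem_apost} together with the stability bound \eqref{eq:bound_vr}. The natural route is to apply the inf--sup theory available to us: the continuous inf--sup condition \eqref{eq:infsup} between $\mathbf{X}=\mathbf{L}^3(\Omega)$ and $M=W^{1,3/2}(\Omega)\cap L^2_0(\Omega)$ guarantees that the divergence-type operator $\mathbf{v}\mapsto\big(\mathsf{q}\mapsto\int_\Omega\mathbf{v}\cdot\nabla\mathsf{q}\big)$ is surjective from $\mathbf{X}$ onto $M'$ with a bounded right inverse; equivalently, the associated bilinear form $b(\mathbf{v},\mathsf{q}):=\int_\Omega\mathbf{v}\cdot\nabla\mathsf{q}$ satisfies the Banach--Ne\v{c}as--Babu\v{s}ka conditions. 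Hence, for any bounded linear functional $F\in M'$ there exists $\mathbf{v}_r\in\mathbf{X}$ with $b(\mathbf{v}_r,\mathsf{q})=F(\mathsf{q})$ for all $\mathsf{q}\in M$ and $\|\mathbf{v}_r\|_{\mathbf{L}^3(\Omega)}\lesssim\|F\|_{M'}$, the hidden constant being the reciprocal of the inf--sup constant, which equals $1$ by \eqref{eq:infsup}.

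First I would verify that the functional $F$ defined in \eqref{eq:aux_problem_apost} is indeed bounded on $M$, and estimate its norm by the right-hand side of \eqref{eq:bound_vr}. Writing $F(\mathsf{q})=\sum_{\gamma\in\mathscr{S}}\int_\gamma\llbracket\mathbf{u}_h\cdot\mathbf{n}\rrbracket(\mathsf{q}-R_h\mathsf{q})\mathrm{d}\mathbf{s}$, I apply H\"older's inequality on each side $\gamma$ with exponents $3$ and $3/2$ to get
\[
|F(\mathsf{q})|\le\sum_{\gamma\in\mathscr{S}}\|\llbracket\mathbf{u}_h\cdot\mathbf{n}\rrbracket\|_{L^3(\gamma)}\,\|\mathsf{q}-R_h\mathsf{q}\|_{L^{3/2}(\gamma)}.
\]
Then I invoke the Cl\'ement interpolation estimate \eqref{eq:interp_Rh_ii}, which gives $\|\mathsf{q}-R_h\mathsf{q}\|_{L^{3/2}(\gamma)}\lesssim h_\gamma^{1/3}\|\mathsf{q}\|_{W^{1,3/2}(\mathcal{N}^*_\gamma)}$, so that
\[
|F(\mathsf{q})|\lesssim\sum_{\gamma\in\mathscr{S}}h_\gamma^{1/3}\|\llbracket\mathbf{u}_h\cdot\mathbf{n}\rrbracket\|_{L^3(\gamma)}\,\|\mathsf{q}\|_{W^{1,3/2}(\mathcal{N}^*_\gamma)}.
\]
Using the finite-overlap property of the patches $\{\mathcal{N}^*_\gamma\}$ (a consequence of shape regularity) together with a discrete Cauchy--Schwarz/H\"older argument, I bound $\sum_\gamma\|\mathsf{q}\|_{W^{1,3/2}(\mathcal{N}^*_\gamma)}\lesssim\|\mathsf{q}\|_{W^{1,3/2}(\Omega)}$ after pulling out the jump factors, which yields $\|F\|_{M'}\lesssim\sum_{\gamma\in\mathscr{S}}h_\gamma^{1/3}\|\llbracket\mathbf{u}_h\cdot\mathbf{n}\rrbracket\|_{L^3(\gamma)}$.

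Combining the two steps: the inf--sup condition \eqref{eq:infsup} produces $\mathbf{v}_r\in\mathbf{X}$ solving \eqref{eq:aux_problem_apost} with $\|\mathbf{v}_r\|_{\mathbf{L}^3(\Omega)}\lesssim\|F\|_{M'}\lesssim\sum_{\gamma\in\mathscr{S}}h_\gamma^{1/3}\|\llbracket\mathbf{u}_h\cdot\mathbf{n}\rrbracket\|_{L^3(\gamma)}$, which is exactly \eqref{eq:bound_vr}. The main technical point — and the only place requiring care — is the passage from the sum of local $W^{1,3/2}$-norms over patches to the single global norm $\|\mathsf{q}\|_{W^{1,3/2}(\Omega)}$: one must keep the jump weights attached to the correct sides and exploit bounded overlap so that the constant depends only on shape regularity, not on $h$ or the mesh cardinality. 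Everything else is a direct invocation of \eqref{eq:infsup} and the standard Cl\'ement estimates \eqref{eq:interp_Rh_i}--\eqref{eq:interp_Rh_ii}.
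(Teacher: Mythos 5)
Your proposal is correct and follows essentially the same route as the paper: bound $\|F\|_{M'}$ via H\"older's inequality on each side together with the Cl\'ement estimate \eqref{eq:interp_Rh_ii}, then invoke the Banach--Ne\v{c}as--Babu\v{s}ka theorem on the basis of the inf--sup condition \eqref{eq:infsup} to obtain $\mathbf{v}_r$ with the stated bound. Your additional remarks on the finite overlap of the patches $\mathcal{N}^*_\gamma$ (and, implicitly, the equivalence of $\|\nabla\mathsf{q}\|_{\mathbf{L}^{3/2}(\Omega)}$ with $\|\mathsf{q}\|_{W^{1,3/2}(\Omega)}$ on $L^2_0(\Omega)$) only make explicit details the paper leaves tacit.
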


\begin{proof}
We first notice that $\|F\|_{M'} \lesssim \sum_{\gamma \in \mathscr{S}} h_\gamma^{1/3}\|\llbracket \mathbf{u}_h\cdot\mathbf{n}\rrbracket \|_{L^3(\gamma)}$.
This bound follows from  H\"older's inequality and the error estimate \eqref{eq:interp_Rh_ii}.
We can thus apply the Banach--Ne\v{c}as--Babu\v{s}ka Theorem \cite[Theorem 2.6]{Guermond-Ern}, to obtain, on the basis of the inf--sup condition \eqref{eq:infsup}, the existence of $\mathbf{v}_r$ solving \eqref{eq:aux_problem_apost} and satisfying \eqref{eq:bound_vr}.
\end{proof}

We are now ready to enunciate and prove the main result of this section.

\begin{theorem}[global reliability]\label{thm:global_reliability}
Let $d=2$, let $\mathfrak{C}$ and $\tilde{\mathfrak{C}}$ be the constants defined in \eqref{def:constant_C} and \eqref{def:constant_C_tilde}, respectively, and let $p$ be such that $4/3 -\epsilon < p < 2$ for some $\epsilon >0$. Let $(\mathbf{u}_h,\mathsf{p}_h,T_h)\in\mathbf{X}_{h}\times M_{h}\times Y_{h}$, for $0<h \leq h_{\star}$, be a solution to the discrete system \eqref{eq:model_discrete}; \GC{passing to a nonrelabelared subsequence if necessary $\mathbf{u}_h \to \mathbf{u}$ in $\mathbf{X}$, $\mathsf{p}_h \rightharpoonup \mathsf{p}$ in $M$, and $T_h \rightharpoonup T$ in $Y$ as $h\downarrow 0$, where $(\mathbf{u},\mathsf{p},T) \in \mathbf{X}\times M\times Y$ is a solution to \eqref{eq:modelweak}.} \GC{Assume that \eqref{eq:assump_convergence} holds and that, for $d = 3$, $\mathbf{u} \in \mathbf{L}^{3+\varepsilon}(\Omega)$, where $\varepsilon>0$ is arbitrarily small. Assume, in addition,} that the aforementioned solutions and problem data are such that the following inequalities hold:
\begin{align}
  \label{eq:global_rel_assump}
  C_{\mathbf{f}}\|\delta_z\|_{W^{-1,p}(\Omega)}  &\leq (1-\rho)\nu^{}_{-}\mathfrak{D},
  \\
  \label{eq:discrete_assump}
\|\mathbf{e}_{\mathbf{u}} \|_{\mathbf{L}^{2}(\Omega)}+\|\mathbf{u} \|_{\mathbf{L}^{2}(\Omega)}& \leq 2\rho \mathfrak{C},
\\
\label{eq:discrete_assump02}
2\|\mathbf{u}\|_{\mathbf{L}^6(\Omega)}+\|\mathbf{e}_{\mathbf{u}}\|_{\mathbf{L}^6(\Omega)} &\leq C, 
\end{align}
where $\mathfrak{D}=(2\sqrt{10} C_{\kappa}^2C_{e}\mathscr{C}_e)^{-1}$, $C_e$ is the best constant in $W_0^{1,p}(\Omega)\hookrightarrow L^{\frac{2p}{2-p}}(\Omega)$, $\mathscr{C}_e$ is the best constant in $W_0^{1,p}(\Omega) \hookrightarrow L^{2}(\Omega)$, $\mathfrak{C}_e$ is the best constant in $W_0^{1,p}(\Omega)\hookrightarrow L^{\frac{3}{2}}(\Omega)$, $C_{\kappa}$ is as in \GC{\eqref{eq:inf_sup_Poisson}}, $C_{\mathbf{f}}$ denotes the Lipschitz constant of $\mathbf{f}$, $\rho \in (0,1)$, and  $C>0$. Then
\begin{equation}\label{eq:upper_bound}
\|\nabla e_{T}\|_{\mathbf{L}^p(\Omega)} + \|\mathbf{e}_{\mathbf{u}}\|_{\mathbf{L}^2(\Omega)}+\|\mathbf{e}_{\mathbf{u}}\|_{\mathbf{L}^{3}(\Omega)}^{\frac{3}{2}}+\|\nabla e_{\mathsf{p}}\|_{\mathbf{L}^{\frac{3}{2}}(\Omega)} 
\lesssim
\mathsf{E}_{\T}.
\end{equation}
The hidden constant is independent of the size of the elements in $\mathscr{T}_h$ and $\#\mathscr{T}_h$.
\end{theorem}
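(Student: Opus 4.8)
The plan is to derive the a posteriori upper bound by estimating each error component separately and then combining them through the smallness assumptions. First I would handle the Darcy--Forchheimer contribution: using the monotonicity property \eqref{eq:prop2_A} of $\mathcal{A}$, I would write $\gamma_d\|\mathbf{e}_{\mathbf{u}}\|_{\mathbf{L}^3(\Omega)}^3 + \nu_{-}\|\mathbf{e}_{\mathbf{u}}\|_{\mathbf{L}^2(\Omega)}^2 \leq \int_\Omega(\mathcal{A}(\mathbf{u})-\mathcal{A}(\mathbf{u}_h))\cdot\mathbf{e}_{\mathbf{u}}\,\mathrm{d}\mathbf{x}$, then subtract a suitable test-function version of the discrete equation. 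Taking $\mathbf{v}_r$ from Lemma \ref{lemma:01_aposteriori} to absorb the nonconforming pressure term $\int_\Omega\nabla e_{\mathsf{p}}\cdot\mathbf{e}_{\mathbf{u}}$ (which is not zero since $\mathbf{X}_h\not\subset\mathbf{V}$) and splitting the residual into elementwise contributions plus interelement jumps, a standard Cl\'ement/integration-by-parts argument yields $\|\mathbf{e}_{\mathbf{u}}\|_{\mathbf{L}^2(\Omega)}^2 + \|\mathbf{e}_{\mathbf{u}}\|_{\mathbf{L}^3(\Omega)}^3 \lesssim \mathfrak{E}_{\T}^2 + \mathfrak{E}_{\T}^{3/2} + \|\mathbf{f}_1(T)-\mathbf{f}_1(T_h)\|_{\mathbf{L}^{3/2}(\Omega)}\|\mathbf{e}_{\mathbf{u}}\|_{\mathbf{L}^3(\Omega)}$; the last cross term is controlled by $C_{\mathbf{f}}\mathscr{C}_e\|\nabla e_T\|_{\mathbf{L}^p(\Omega)}$ via the embedding $W_0^{1,p}(\Omega)\hookrightarrow L^2(\Omega)$ and Young's inequality.

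Next I would bound the temperature error. Since $\kappa$ is constant, $e_T\in W_0^{1,p}(\Omega)$ satisfies a convection--diffusion equation with right-hand side a combination of the residuals $\mathcal{R}_K$, the jumps $\mathcal{J}_\gamma$, the Dirac datum, and a perturbation term coming from $T\mathbf{u}-T_h\mathbf{u}_h$. Invoking the inf--sup estimate \eqref{eq:inf_sup_Poisson} (with $\xi=\kappa$) after checking the smallness condition $C_\kappa C_e\|\mathbf{u}\|_{\mathbf{L}^3(\Omega)}\leq\alpha<1$ — which follows from $\|\mathbf{u}\|_{\mathbf{L}^3(\Omega)}\leq\mathfrak{C}=(2C_eC_\kappa)^{-1}$ guaranteed by \eqref{eq:bounds_forchheimercoupled_velocity} and \eqref{eq:assump_convergence} — I would test against $S-I_hS$, use the interpolation estimates \eqref{eq:interp_Lagrange}--\eqref{eq:interp_Lagrange_side}, and separately estimate the contribution of $\langle\delta_z, S-I_hS\rangle$, which produces the $h_K^{(d+p(1-d))/p}$ term in \eqref{eq:local_indicator_temp_I} when $z$ lies in the interior of $K$ and vanishes when $z$ is a vertex (this is exactly why the two indicator definitions differ). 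The convective perturbation $\int_\Omega(T\mathbf{u}-T_h\mathbf{u}_h)\cdot\nabla S$ splits as $\int_\Omega e_T\mathbf{u}\cdot\nabla S + \int_\Omega T_h\mathbf{e}_{\mathbf{u}}\cdot\nabla S$; the first piece is absorbed by the left-hand side through the $\alpha<1$ condition, while the second is bounded by $C\|\mathbf{e}_{\mathbf{u}}\|_{\mathbf{L}^3(\Omega)}\|\nabla S\|_{\mathbf{L}^{p'}(\Omega)}$ using \eqref{eq:discrete_assump02} and $\|\nabla T_h\|_{\mathbf{L}^p(\Omega)}$ bounded. This yields $\|\nabla e_T\|_{\mathbf{L}^p(\Omega)}\lesssim\mathcal{E}_{p,\T}+\|\mathbf{e}_{\mathbf{u}}\|_{\mathbf{L}^3(\Omega)}$.

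Finally I would close the system by combining the two partial estimates. Substituting the temperature bound into the velocity estimate (and vice versa) gives $\|\mathbf{e}_{\mathbf{u}}\|_{\mathbf{L}^2(\Omega)}^2+\|\mathbf{e}_{\mathbf{u}}\|_{\mathbf{L}^3(\Omega)}^3 \lesssim \mathfrak{E}_{\T}^2 + \mathfrak{E}_{\T}^{3/2} + (\text{small})\cdot(\|\mathbf{e}_{\mathbf{u}}\|_{\mathbf{L}^3(\Omega)}^2+\|\mathbf{e}_{\mathbf{u}}\|_{\mathbf{L}^3(\Omega)}\mathcal{E}_{p,\T})$, where the smallness of the coupling constant is precisely what assumptions \eqref{eq:global_rel_assump}--\eqref{eq:discrete_assump} encode: $C_{\mathbf{f}}\|\delta_z\|_{W^{-1,p}(\Omega)}\leq(1-\rho)\nu_{-}\mathfrak{D}$ and the $\mathbf{L}^2$-bound $\|\mathbf{e}_{\mathbf{u}}\|_{\mathbf{L}^2(\Omega)}+\|\mathbf{u}\|_{\mathbf{L}^2(\Omega)}\leq 2\rho\mathfrak{C}$ together make the feedback coefficient strictly less than one, so it can be absorbed. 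The pressure error is then recovered a posteriori from the discrete inf--sup condition applied to the first equation: $\|\nabla e_{\mathsf{p}}\|_{\mathbf{L}^{3/2}(\Omega)}\lesssim\|\mathcal{A}(\mathbf{u})-\mathcal{A}(\mathbf{u}_h)\|_{\mathbf{L}^{3/2}(\Omega)}+\|\mathbf{f}(T)-\mathbf{f}(T_h)\|_{\mathbf{L}^{3/2}(\Omega)}+\mathfrak{E}_{\T}$, and property \eqref{eq:prop5_A} converts the first term into $(\nu_{+}+\|\mathbf{u}\|_{\mathbf{L}^6(\Omega)}+\|\mathbf{u}_h\|_{\mathbf{L}^6(\Omega)})\|\mathbf{e}_{\mathbf{u}}\|_{\mathbf{L}^2(\Omega)}$ — which is where \eqref{eq:discrete_assump02} is needed — plus a term controlled by $\|\nabla e_T\|_{\mathbf{L}^p(\Omega)}$. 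Summing all four contributions gives \eqref{eq:upper_bound}. The main obstacle is the bookkeeping around the coupling: making the absorption argument rigorous requires carefully tracking the constants $\mathfrak{D}$, $\mathfrak{C}$, and $\rho$, choosing the right powers in Young's inequality so that the cubic term $\|\mathbf{e}_{\mathbf{u}}\|_{\mathbf{L}^3(\Omega)}^3$ and the quadratic term $\|\mathbf{e}_{\mathbf{u}}\|_{\mathbf{L}^2(\Omega)}^2$ are both handled simultaneously, and verifying that the feedback constant from the two-way coupling is genuinely below one under hypotheses \eqref{eq:global_rel_assump}--\eqref{eq:discrete_assump02}; the restriction to $d=2$ enters through the embeddings $W_0^{1,p}(\Omega)\hookrightarrow L^{2p/(2-p)}(\Omega)$ and $W_0^{1,p}(\Omega)\hookrightarrow L^2(\Omega)$ needed to make $\mathscr{C}_e$, $C_e$ finite for $p<2$.
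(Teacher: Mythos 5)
Your overall architecture is the same as the paper's (monotonicity of $\mathcal{A}$ plus the corrector $\mathbf{v}_r$ of Lemma \ref{lemma:01_aposteriori} for the velocity, the inf--sup condition \eqref{eq:inf_sup_Poisson} tested against $S-I_hS$ for the temperature, the Dirac term handled through the $L^\infty$ interpolation bound, and the pressure recovered last from \eqref{eq:infsup}), but the way you propose to close the two-way coupling has a genuine gap. You estimate the coupling term $\int_\Omega T_h\,\mathbf{e}_{\mathbf{u}}\cdot\nabla S\,\mathrm{d}\mathbf{x}$ via \eqref{eq:termconv}, i.e.\ through $\|\mathbf{e}_{\mathbf{u}}\|_{\mathbf{L}^3(\Omega)}$, so your temperature bound reads $\|\nabla e_T\|_{\mathbf{L}^p(\Omega)}\lesssim \mathcal{E}_{p,\mathscr{T}}+\|\mathbf{e}_{\mathbf{u}}\|_{\mathbf{L}^3(\Omega)}$ and your combined inequality contains a term $(\text{small})\cdot\|\mathbf{e}_{\mathbf{u}}\|_{\mathbf{L}^3(\Omega)}^2$. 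The left-hand side produced by the monotonicity estimate controls $\|\mathbf{e}_{\mathbf{u}}\|_{\mathbf{L}^3(\Omega)}$ only \emph{cubically} (the quadratic term is $\nu_-\|\mathbf{z}\|_{\mathbf{L}^2(\Omega)}^2$, an $L^2$ quantity), and a small multiple of $\|\mathbf{e}_{\mathbf{u}}\|_{\mathbf{L}^3(\Omega)}^2$ cannot be absorbed into $\gamma_d\|\mathbf{e}_{\mathbf{u}}\|_{\mathbf{L}^3(\Omega)}^3$ in the regime where the error is small — which is exactly the regime of interest. The absorption as you describe it therefore does not close. The paper's fix is to apply H\"older with exponents $\bigl(\tfrac{2p}{2-p},2,p'\bigr)$, writing $\bigl|\int_\Omega T\mathbf{e}_{\mathbf{u}}\cdot\nabla S\,\mathrm{d}\mathbf{x}\bigr|\le \|T\|_{L^{2p/(2-p)}(\Omega)}\|\mathbf{e}_{\mathbf{u}}\|_{\mathbf{L}^2(\Omega)}\|\nabla S\|_{\mathbf{L}^{p'}(\Omega)}$, so the feedback runs through $\|\mathbf{e}_{\mathbf{u}}\|_{\mathbf{L}^2(\Omega)}$, matching the quadratic term on the left; this is precisely where $d=2$, $p<2$, and the constants $C_e$, $\mathscr{C}_e$, $\mathfrak{D}$ in hypotheses \eqref{eq:global_rel_assump}--\eqref{eq:discrete_assump} enter.

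A secondary, related issue: your velocity estimate carries a term $\mathfrak{E}_{\mathscr{T}}^{3/2}$, presumably from Young's inequality applied to a pairing with $\|\mathbf{e}_{\mathbf{u}}\|_{\mathbf{L}^3(\Omega)}$. Since the final assertion is $\|\mathbf{e}_{\mathbf{u}}\|_{\mathbf{L}^3(\Omega)}^{3/2}\lesssim\mathsf{E}_{\mathscr{T}}$ (not a power of $\mathsf{E}_{\mathscr{T}}$ smaller than one), you need the intermediate estimate $\|\mathbf{e}_{\mathbf{u}}\|_{\mathbf{L}^2(\Omega)}^2+\|\mathbf{e}_{\mathbf{u}}\|_{\mathbf{L}^3(\Omega)}^3\lesssim \mathfrak{E}_{\mathscr{T}}^2+\mathcal{E}_{p,\mathscr{T}}^2$ with \emph{squared} estimator terms only; this is obtained by pairing the element residuals $\mathfrak{R}_K\in\mathbf{L}^2(K)$ with $\|\mathbf{z}\|_{\mathbf{L}^2(\Omega)}$ and absorbing into $\nu_-\|\mathbf{z}\|_{\mathbf{L}^2(\Omega)}^2$, not with $\|\mathbf{e}_{\mathbf{u}}\|_{\mathbf{L}^3(\Omega)}$. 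With these two corrections — routing both the coupling term and the residual pairings through the $L^2$ norm — your argument coincides with the paper's proof up to the order of Steps 1 and 2.
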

\begin{proof}
We divide the proof in four steps.

\emph{Step 1.} We first bound $\|\nabla e_{T}\|_{\mathbf{L}^p(\Omega)}$. Owing to Proposition \ref{prop:u=0}, we have that there is a positive constant $C_{\kappa}$ such that the following inf-sup condition holds:
\begin{equation}
 \|\nabla e_{T}\|_{\mathbf{L}^{p}(\Omega)}\leq C_{\kappa}\sup_{S\in W_0^{1,p'}(\Omega)}\frac{1}{\|\nabla S\|_{\mathbf{L}^{p'}(\Omega)}}\int_\Omega\kappa\nabla e_{T}\cdot \nabla S\mathrm{d}\mathbf{x}.
 \label{eq:a_posteriori_infsup}
\end{equation}
To estimate the right-hand side of \eqref{eq:a_posteriori_infsup}, we invoke the third equation in \eqref{eq:modelweak}, a standard integration by parts formula, and Galerkin orthogonality to obtain
\begin{multline}
\label{eq:error_equation_temp}
\int_\Omega \left( \kappa\nabla e_{T} + e_{T} \mathbf{e}_\mathbf{u}  -e_T \mathbf{u}  - T\mathbf{e}_{\mathbf{u}}\right)\cdot\nabla S\mathrm{d}\mathbf{x} = \langle \delta_{z},S-I_{h}S\rangle \\
+\sum_{K\in\mathscr{T}_h}\int_{K}\mathcal{R}_K(S-I_{h}S)\mathrm{d}{\mathbf{x}} 
+ 
\sum_{\gamma\in\mathscr{S}}\int_\gamma \mathcal{J}_{\gamma}(S-I_{h}S)\mathrm{d}\mathbf{s},
\end{multline}
for an arbitrary $S\in W_0^{1,p\prime}(\Omega)$; $\mathcal{R}_K$ and $\mathcal{J}_{\gamma}$ are defined in \eqref{eq:residual_heat}. We can thus invoke \eqref{eq:a_posteriori_infsup}, H\"older's inequality, the local error bounds \eqref{eq:interp_Lagrange} and \eqref{eq:interp_Lagrange_side}, \EO{and the estimate $\| S - I_h S \|_{L^{\infty}(K)} \lesssim h_K^{\sigma} \| \nabla S\|_{L^ {p'}(K)}$ \cite[Corollary 4.4.7]{MR2373954}, where $\sigma = 1-d/p'$,} to obtain
\begin{multline*}
\|\nabla e_{T}\|_{\mathbf{L}^{p}(\Omega)}
\!\leq \!C_{\kappa} [ \|e_{T}\|_{L^{\mathfrak{p}}(\Omega)} 
\!\left( \|\mathbf{e}_{\mathbf{u}}\|_{\mathbf{L}^2(\Omega)} \!+\!\! \|\mathbf{u}\|_{\mathbf{L}^2(\Omega)} \right)
+  \|T\|_{L^{\mathfrak{p}}(\Omega)} \|\mathbf{e}_{\mathbf{u}}\|_{\mathbf{L}^2(\Omega)} + C\mathcal{E}_{p,\T}],
\end{multline*} 
where $\mathfrak{p} = \frac{2p}{2-p}$. This bound, $W_0^{1,p}(\Omega)\hookrightarrow L^{\frac{2p}{2-p}}(\Omega)$, and assumption \eqref{eq:discrete_assump}, reveal that
\begin{equation}\label{eq:temp_estimate}
(1-\rho)\|\nabla e_{T}\|_{\mathbf{L}^{p}(\Omega)} 
\leq 
2C_{\kappa}^{2}C_{e} \|\delta_z\|_{W^{-1,p}(\Omega)}\|\mathbf{e}_{\mathbf{u}}\|_{\mathbf{L}^2(\Omega)} + CC_{\kappa}\mathcal{E}_{p,\T},
\end{equation}
where $C_e$ denotes the best constant in the embedding $W_0^{1,p}(\Omega)\hookrightarrow L^{\frac{2p}{2-p}}(\Omega)$ and \GC{$C_{\kappa}$ is the constant involved in the inf-sup condition \eqref{eq:inf_sup_Poisson} with $\xi$ being replaced by $\kappa$.}

\emph{Step 2}. We now follow \cite{https://doi.org/10.48550/arxiv.2202.11643} and control $\mathbf{e}_{\mathbf{u}}$. To do this, we first observe that
\begin{align}\label{eq:error_equation_darcy_1}
\int_\Omega \big(\mathcal{A}(\mathbf{u})-\mathcal{A}(\mathbf{u}_h)+\nabla e_{\mathsf{p}}\big)\cdot\mathbf{v}\mathrm{d}\mathbf{x}\!-\!\int_\Omega (\mathbf{f}(T)-\mathbf{f}(T_h))\cdot\mathbf{v}\mathrm{d}\mathbf{x}\!=\! \sum_{K\in\mathscr{T}_h}\int_K \mathfrak{R}_{K}\cdot\mathbf{v}\mathrm{d}\mathbf{x},
\end{align}
for $\mathbf{v} \in \mathbf{L}^3(\Omega)$; $\mathfrak{R}_{K}$ is defined in \eqref{eq:residuals_Darcy_1}. Let us now define $\mathbf{z}:=\mathbf{u}-\mathbf{u}_h-\mathbf{v}_r = \mathbf{e}_{\mathbf{u}} -\mathbf{v}_r $, where $\mathbf{v}_r$ is as in the statement of Lemma \ref{lemma:01_aposteriori}, and set $\mathbf{v}=\mathbf{z}$ in \eqref{eq:error_equation_darcy_1} to obtain
\begin{align}\label{eq:error_equation_darcy_id}
\int_\Omega \left(\nu\mathbf{e}_{\mathbf{u}}+|\mathbf{u}|\mathbf{u}-|\mathbf{u}_h|\mathbf{u}_h\right)\cdot\mathbf{z}\mathrm{d}\mathbf{x} - \int_\Omega \left(\mathbf{f}(T)-\mathbf{f}(T_h)\right) \cdot\mathbf{z}\mathrm{d}\mathbf{x}  = \sum_{K\in\mathscr{T}_h}\int_K \mathfrak{R}_{K}\cdot\mathbf{z}\mathrm{d}\mathbf{x},
\end{align}
where we have utilized that $\int_\Omega \nabla \mathsf{q}\cdot \mathbf{z} \mathrm{d}\mathbf{x}=0$, for every $\mathsf{q}\in M$, which follows from \eqref{eq:lemma_00} and \eqref{eq:aux_problem_apost}. Consequently, we can arrive at the identity
\begin{multline*}
\int_\Omega \left[\nu\mathbf{e}_{\mathbf{u}} \cdot \mathbf{z}+(|\mathbf{u}|\mathbf{u}-|\mathbf{u}_h|\mathbf{u}_h) \cdot \mathbf{e}_{\mathbf{u}} \right]\mathrm{d}\mathbf{x} = \sum_{K\in\mathscr{T}_h}\int_K \mathfrak{R}_{K}\cdot\mathbf{z}\mathrm{d}\mathbf{x}
\\
+
\int_\Omega \left(\mathbf{f}(T)-\mathbf{f}(T_h)\right) \cdot\mathbf{z}\mathrm{d}\mathbf{x}
+
\int_\Omega|\mathbf{u}|(\mathbf{u}-\mathbf{u}_h)\cdot \mathbf{v}_r\mathrm{d}\mathbf{x}+\int_\Omega(|\mathbf{u}|-|\mathbf{u}_h|)\mathbf{u}_h\cdot \mathbf{v}_r\mathrm{d}\mathbf{x}.
\end{multline*}
Let now us invoke \cite[Lemma 4.4, Chapter I]{MR1230384} with $p=3$ and utilize the fact that $\nu$ is such that $0<\nu_{-}\leq \nu(\mathbf{x})\leq \nu_{+}$ for every $\mathbf{x} \in \Omega$ to conclude that
\begin{multline*}
\nu_{-}\|\mathbf{z}\|_{\mathbf{L}^2(\Omega)}^2+ \gamma_d \|\mathbf{e}_{\mathbf{u}}\|_{\mathbf{L}^3(\Omega)}^3\leq C_{\mathbf{f}}\mathscr{C}_e\|\nabla e_T\|_{\mathbf{L}^p(\Omega)} \|\mathbf{z}\|_{\mathbf{L}^2(\Omega)}+\sum_{K\in\mathscr{T}_h}\|\mathfrak{R}_{K}\|_{\mathbf{L}^2(\Omega)}\|\mathbf{z}\|_{\mathbf{L}^2(\Omega)}\\
+(2\|\mathbf{u}\|_{\mathbf{L}^6(\Omega)}+\|\mathbf{e}_{\mathbf{u}}\|_{\mathbf{L}^6(\Omega)})(\|\mathbf{z}\|_{\mathbf{L}^2(\Omega)}+\|\mathbf{v}_r\|_{\mathbf{L}^2(\Omega)})\|\mathbf{v}_r\|_{\mathbf{L}^3(\Omega)}+\nu_{+}\|\mathbf{v}_r\|_{\mathbf{L}^2(\Omega)}\|\mathbf{z}\|_{\mathbf{L}^2(\Omega)},
\end{multline*}
where $\mathscr{C}_e$ is the best constant in $W_{0}^{1,p}(\Omega) \hookrightarrow L^2(\Omega)$. We now utilize the basic bound $\|\mathbf{v}\|_{\mathbf{L}^2(\Omega)}\lesssim \|\mathbf{v}\|_{\mathbf{L}^3(\Omega)}$, the results of Lemma \ref{lemma:01_aposteriori}, assumptions \eqref{eq:global_rel_assump} and \eqref{eq:discrete_assump02}, the a posteriori bound \eqref{eq:temp_estimate}, and suitable Young's inequalities to deal with the terms involving $\|\mathbf{z}\|_{\mathbf{L}^2(\Omega)}$ appearing in the right-hand side of the previous inequality, to obtain
\begin{align*}
\|\mathbf{e}_{\mathbf{u}}\|_{\mathbf{L}^2(\Omega)}^2+\|\mathbf{e}_{\mathbf{u}}\|_{\mathbf{L}^3(\Omega)}^3 
\lesssim 
\mathfrak{E}_{\T}^2 
+\mathcal{E}_{p,\T}^2.
\end{align*} 
Replacing this bound into estimate \eqref{eq:temp_estimate} immediately yields
$\|\nabla e_{T}\|_{\mathbf{L}^{p}(\Omega)} \lesssim \mathfrak{E}_{\T} + \mathcal{E}_{p,\T}$, upon utilizing assumption \eqref{eq:global_rel_assump}. 

\emph{Step 3}. Finally, we control the pressure error $\| \nabla e_{\mathsf{p}} \|_{\mathbf{L}^{3/2}(\Omega)}$. Since $\mathcal{A}(\mathbf{u}) - \mathcal{A}(\mathbf{u}_h) = \nu \mathbf{e}_{\mathbf{u}} + |\mathbf{u}|\mathbf{u}-|\mathbf{u}_h|\mathbf{u}_h$, we utilize \eqref{eq:error_equation_darcy_1} combined with the Lipschitz property of $\mathbf{f}_1$ and the embedding $W_0^{1,p}(\Omega)\hookrightarrow L^{2}(\Omega)$ to obtain
\begin{multline}
\left|
\int_\Omega \nabla e_{\mathsf{p}}\cdot\mathbf{v}\mathrm{d}\mathbf{x}
\right|
\leq \sum_{K\in\mathscr{T}_h}\|\mathfrak{R}_{K}\|_{\mathbf{L}^2(K)}\|\mathbf{v}\|_{\mathbf{L}^2(K)}+\nu_{+}\|\mathbf{e}_{\mathbf{u}}\|_{\mathbf{L}^2(\Omega)}\|\mathbf{v}\|_{\mathbf{L}^2(\Omega)}
\\
+
C
\|\mathbf{e}_{\mathbf{u}}\|_{\mathbf{L}^{2}(\Omega)}\|\mathbf{v}\|_{\mathbf{L}^3(\Omega)}
+
C_{\mathbf{f}}\mathscr{C}_e\|\nabla e_T\|_{\mathbf{L}^{p}(\Omega)}\|\mathbf{v}\|_{\mathbf{L}^2(\Omega)},
\end{multline}
upon utilizing that $2\|\mathbf{u}\|_{\mathbf{L}^{6}(\Omega)} + \|\mathbf{e}_\mathbf{u}\|_{\mathbf{L}^{6}(\Omega)} \leq C$. It thus suffices to invoke the inf-sup condition \eqref{eq:infsup} to arrive at the a posteriori error bound
\begin{equation*}
\|\nabla e_{\mathsf{p}}\|_{\mathbf{L}^{\frac{3}{2}}(\Omega)}
\lesssim 
\|\mathbf{e}_{\mathbf{u}}\|_{\mathbf{L}^2(\Omega)}+ \|\nabla e_{T}\|_{\mathbf{L}^p(\Omega)}  + \mathfrak{E}_{\T}.
\end{equation*}

\emph{Step 4.} The desired a posteriori error bound \eqref{eq:upper_bound} follows from collecting the a posteriori error estimates derived in Steps 1, 2, and 3. This concludes the proof.
\end{proof}


\subsection{Efficiency estimates}\label{sec:efficiency_estimates}

In this section, we analyze efficiency properties for the local error indicators $\mathcal{E}_{p,K}$ and $\mathfrak{E}_K$. We begin our analysis by introducing the following notation: for an edge, triangle or tetrahedron $G$, let $\mathcal{V}(G)$ be the set of vertices of $G$. With this notation at hand, for $K\in\mathscr{T}_h$ and $\gamma\in\mathscr{S}$, we introduce the following standard element and edge bubble functions:
\begin{equation}\label{def:standard_bubbles}
\varphi^{}_{K}=
(d+1)^{d+1}\prod_{\textsc{v} \in \mathcal{V}(K)} \lambda^{}_{\textsc{v}},
\quad
\varphi^{}_{\gamma}=
d^d \prod_{\textsc{v} \in \mathcal{V}(\gamma)}\lambda^{}_{\textsc{v}}|^{}_{K'},
\quad 
K' \subset \mathcal{N}_{\gamma}\GC{.}
\end{equation}
Here, $\lambda_{\textsc{v}}$ denote the barycentric coordinate function associated to $\textsc{v}$. $\mathcal{N}_{\gamma}$ corresponds to the patch composed of the two elements of $\mathscr{T}_h$ sharing $\gamma$.

We will also make use of the following bubble functions, whose construction we owe to \cite{MR3237857,MR2262756}.
Given $K\in\mathscr{T}_h$, we define the element bubble function $\phi_K$ as
\begin{equation}\label{def:bubble_element}
\phi_K(\mathbf{x}):=
h_K^{-2}|\mathbf{x}-z|^{2}\varphi_K(\mathbf{x})  \text{  if  } z\in K,
\qquad
\phi_K(\mathbf{x}):=
\varphi_K(\mathbf{x}) \text{  if  } z\not\in K.
\end{equation}
Given $\gamma\in \mathscr{S}$, we define the edge bubble function $\phi_\gamma$ as
\begin{equation}\label{def:bubble_side}
\phi_\gamma(\mathbf{x}):=
h_\gamma^{-2}|\mathbf{x}-z|^{2} \varphi_\gamma(\mathbf{x})  \text{ if } z\in \mathring{\mathcal{N}}_\gamma,
\qquad
\phi_\gamma(\mathbf{x}):=
\varphi_\gamma(\mathbf{x}) \text{ if } z\not\in \mathring{\mathcal{N}}_\gamma,
\end{equation}
where $\mathring{\mathcal{N}}_\gamma$ denotes the interior of $\mathcal{N}_\gamma$. We recall that the Dirac measure $\delta_{z}$ is supported at $z \in \Omega$: it can thus be supported on the interior, an edge, or a vertex of an element $K$ of the triangulation $\mathscr{T}_h$.

Given $\gamma\in\mathscr{S}$, we introduce the continuation operator $\Pi_{\gamma}:L^\infty(\gamma)\rightarrow L^\infty(\mathcal{N}_\gamma)$ \cite[Section 3]{MR1650051} and notice that $\Pi_{\gamma}$ maps polynomials onto piecewise polynomials of the same degree. $\Pi_{\gamma}$ will be useful for controlling \emph{jump} terms.

We now provide the following result \cite[Lemmas 3.1 and 3.2]{MR3237857}. 

\begin{lemma}[bubble function properties]\label{lemma:bubble_properties}
Let $K\in\mathscr{T}_h$, $\gamma\in\mathscr{S}$, and $r\in(1,\infty)$. If $S_{h}|^{}_K\in\mathbb{P}_1(K)$ and $R_{h}|^{}_\gamma\in \mathbb{P}_1(\gamma)$, then
\begin{align*}
\|S_{h}\|_{L^r(K)} &\lesssim \|S_{h} \phi_K^{\frac{1}{r}}\|_{L^r(K)} \lesssim \|S_{h}\|_{L^r(K)}, 
\\
\|R_{h}\|_{L^r(\gamma)} &\lesssim \|R_{h} \phi_\gamma^{\frac{1}{r}}\|_{L^r(\gamma)} \lesssim \|R_{h}\|_{L^r(\gamma)},
\\
\|\phi_\gamma \Pi_{\gamma} (R_{h})\|_{L^r(\mathcal{N}_{\gamma})} & \lesssim h_{\gamma}^{\frac{1}{r}}\|R_{h}\|_{L^r(\gamma)}.
\end{align*}
\end{lemma}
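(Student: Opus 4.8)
We sketch how to prove this; the argument follows \cite[Lemmas 3.1 and 3.2]{MR3237857} and rests on scaling to a reference configuration together with the equivalence of norms on finite-dimensional spaces.

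The plan is to first record elementary pointwise bounds and then treat the nontrivial lower estimates by scaling. Since the $d+1$ barycentric coordinates of $K$ sum to one, their product attains its maximum $(d+1)^{-(d+1)}$ at the barycenter, so $0\le \varphi_K \le 1$ on $K$; the same reasoning gives $\varphi_\gamma \le 1$ on $\gamma$. When $z\in K$ one has $|\mathbf{x}-z|\le h_K$ for all $\mathbf{x}\in K$, hence $h_K^{-2}|\mathbf{x}-z|^2\le 1$ and therefore $\phi_K\le\varphi_K\le 1$; when $z\notin K$, $\phi_K=\varphi_K$ by definition. Analogously, shape regularity gives $\mathrm{diam}(\mathcal{N}_\gamma)\lesssim h_\gamma$, whence $0\le\phi_\gamma\lesssim\varphi_\gamma$ on $\mathcal{N}_\gamma$. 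These bounds yield at once the upper estimates $\|S_h\phi_K^{1/r}\|_{L^r(K)}\lesssim\|S_h\|_{L^r(K)}$ and $\|R_h\phi_\gamma^{1/r}\|_{L^r(\gamma)}\lesssim\|R_h\|_{L^r(\gamma)}$, and they also reduce the third estimate to the classical bound $\|\varphi_\gamma\Pi_\gamma(R_h)\|_{L^r(\mathcal{N}_\gamma)}\lesssim h_\gamma^{1/r}\|R_h\|_{L^r(\gamma)}$ with the standard edge bubble $\varphi_\gamma$.

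For the lower estimates I would pass to the reference simplex $\hat K$ via the affine map $F_K\colon\hat K\to K$ and, when $z\in K$, set $\hat z:=F_K^{-1}(z)\in\hat K$. On the finite-dimensional space $\mathbb{P}_1(\hat K)$ the functional $\hat S\mapsto\|\hat S\,\hat\phi_{\hat z}^{1/r}\|_{L^r(\hat K)}$, with $\hat\phi_{\hat z}(\hat{\mathbf{x}}):=|\hat{\mathbf{x}}-\hat z|^2\hat\varphi_{\hat K}(\hat{\mathbf{x}})$, is a norm, since $\hat\phi_{\hat z}>0$ a.e.\ and hence $\hat S\,\hat\phi_{\hat z}^{1/r}=0$ a.e.\ forces $\hat S\equiv 0$; by equivalence of norms it is comparable to $\|\hat S\|_{L^r(\hat K)}$, and mapping back with $F_K$ while tracking the Jacobian gives $\|S_h\|_{L^r(K)}\lesssim\|S_h\phi_K^{1/r}\|_{L^r(K)}$. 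The same computation on the reference edge $\hat\gamma$ gives the second estimate, and on the reference patch $\hat{\mathcal{N}}_\gamma$, combined with the continuation operator $\Pi_\gamma$ and the fact that it maps polynomials onto piecewise polynomials of the same degree, gives the third; the factor $h_\gamma^{1/r}$ there is the $r$-th root of the ratio $|\mathcal{N}_\gamma|/|\gamma|\approx h_\gamma$ between the volume and surface scaling factors.

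The delicate point—and the one I would be most careful about—is that the constant in the norm equivalence on $\mathbb{P}_1(\hat K)$ produced above a priori depends on the position of $\hat z$, whereas the lemma demands constants uniform over all admissible positions of $z$ (interior of, on an edge of, or a vertex of the element, resp.\ of the patch). I would close this gap by a compactness argument: the equivalence constant $C(\hat z)$ depends continuously on $\hat z$ ranging over the compact set $\overline{\hat K}$ (resp.\ $\overline{\hat{\mathcal{N}}_\gamma}$), so $\sup_{\hat z}C(\hat z)<\infty$; shape regularity of $\mathbb{T}$ then makes the transfer between $K$ and $\hat K$ uniform, yielding the stated estimates. Alternatively, the explicit test-function constructions of \cite{MR3237857,MR2262756} bypass compactness altogether.
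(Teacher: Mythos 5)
The paper offers no proof of this lemma---its ``proof'' is the single citation \cite[Lemmas 3.1 and 3.2]{MR3237857}---and your sketch correctly reconstructs the standard argument used there: pointwise bounds $0\le\phi_K\le\varphi_K\le 1$ and $\phi_\gamma\lesssim\varphi_\gamma$ for the upper estimates and the third bound, and scaling to a reference configuration plus equivalence of norms on $\mathbb{P}_1$ for the lower estimates, with the uniformity of the constant in the position of $z$ (which you rightly single out as the delicate point) handled by continuity and compactness. The only minor caveat is that for the side estimates there is no single reference patch $\hat{\mathcal{N}}_\gamma$, so the compactness argument should be run with $\hat z$ ranging over a fixed compact set (e.g., a ball around the reference side containing all rescaled shape-regular patches), which shape regularity provides; this does not affect the correctness of your argument.
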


\subsubsection{Local estimates for $\mathcal{E}_{p,K}$}

In the following result\GC{,} we derive local error bounds for $\mathcal{E}_{p,K}$, which is defined in \eqref{eq:local_indicator_temp_I} and \eqref{eq:local_indicator_temp_II}.

\begin{theorem}[local estimate for $\mathcal{E}_{p,K}$]\label{thm:efficiency} 
Let $d\in\{2,3\}$. 
\EO{In the framework of Theorem \ref{thm:convergence},
we have}
\begin{equation}\label{eq:local_efficiency_temp}
\mathcal{E}_{p,K}
\lesssim
\|\nabla e_{T}\|_{\mathbf{L}^p(\mathcal{N}_K^{*})} + \|e_{T}\|_{L^{\frac{dp}{d-p}}(\mathcal{N}_K^{*})} + \|\mathbf{e}_{\mathbf{u}}\|_{\mathbf{L}^d(\mathcal{N}_K^{*})},
\end{equation}
where $\mathcal{N}_{K}^{*}$ is defined in \eqref{eq:patch}. The hidden constant is independent of continuous and discrete solutions $(\mathbf{u},\mathsf{p},T)$ and $(\mathbf{u}_h,\mathsf{p}_h,T_h)$, respectively, the size of the elements in the mesh $\mathscr{T}_h$, and $\#\mathscr{T}_h$.
\end{theorem}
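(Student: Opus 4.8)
The plan is to use the standard bubble-function technique, adapted to the singular setting through the modified bubbles $\phi_K$ and $\phi_\gamma$ of \eqref{def:bubble_element}--\eqref{def:bubble_side}. We must bound each of the three contributions appearing in $\mathcal{E}_{p,K}$: the term $h_K^{d+p(1-d)}$ that arises only when $z$ is an interior point of $K$ (and is not a vertex), the element-residual term $h_K^p\|\mathcal{R}_K\|_{L^p(K)}^p$, and the jump term $h_K\|\mathcal{J}_\gamma\|_{L^p(\partial K\setminus\partial\Omega)}^p$. Throughout, the key identity is the error equation \eqref{eq:error_equation_temp}, which for a test function $S \in W_0^{1,p'}(\Omega)$ and after undoing the integration by parts reads, locally on each element,
\begin{equation*}
\langle \delta_z, S \rangle + \sum_{K}\int_K \mathcal{R}_K S\,\mathrm{d}\mathbf{x} + \sum_{\gamma}\int_\gamma \mathcal{J}_\gamma S\,\mathrm{d}\mathbf{s}
=
\int_\Omega \left(\kappa\nabla e_T + e_T\mathbf{e}_{\mathbf{u}} - e_T\mathbf{u} - T\mathbf{e}_{\mathbf{u}}\right)\cdot\nabla S\,\mathrm{d}\mathbf{x},
\end{equation*}
where I have used Galerkin orthogonality to replace $S - I_hS$ by $S$ since the terms are now genuine residuals tested against $S$ itself (the interpolation only matters because $I_hS$ is an admissible discrete test function). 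The strategy is then to choose $S$ supported on $K$ or on $\mathcal{N}_\gamma$, built from the appropriate residual times the appropriate bubble, and to exploit the norm equivalences in Lemma \ref{lemma:bubble_properties} on one side and $\delta_z$-evaluation estimates on the other.

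First I would treat the element residual: fix $K$ with $z\notin K$, take $S = \mathcal{R}_K\,\varphi_K$ extended by zero (note $\mathcal{R}_K = -\nabla T_h\cdot\mathbf{u}_h$ is a polynomial since $\nabla T_h$ is piecewise constant and $\mathbf{u}_h$ is piecewise constant, so $\mathcal{R}_K\in\mathbb{P}_0(K)$ — in fact constant). Then $\langle\delta_z,S\rangle = 0$ and $\int_\gamma\mathcal{J}_\gamma S = 0$ on $\partial K$, so $\int_K\mathcal{R}_K S = \int_K(\kappa\nabla e_T + e_T\mathbf{e}_{\mathbf{u}}-e_T\mathbf{u}-T\mathbf{e}_{\mathbf{u}})\cdot\nabla S$. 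Using $\|S\|_{L^p(K)}\lesssim\|\mathcal{R}_K\|_{L^p(K)}$, the bound $\|\nabla S\|_{L^{p'}(K)}\lesssim h_K^{-1}\|\mathcal{R}_K\|_{L^{p'}(K)}\lesssim h_K^{-1}h_K^{d(1/p'-1/p)}\|\mathcal{R}_K\|_{L^p(K)}$ by inverse estimates, Hölder's inequality applied to the convective terms (treating $e_T\mathbf{u}$, $T\mathbf{e}_{\mathbf{u}}$ with the embedding $W^{1,p}\hookrightarrow L^{dp/(d-p)}$ and its dual exponent), and rearranging scale powers, one obtains $h_K\|\mathcal{R}_K\|_{L^p(K)}\lesssim \|\nabla e_T\|_{L^p(K)} + \|e_T\|_{L^{dp/(d-p)}(K)} + \|\mathbf{e}_{\mathbf{u}}\|_{L^d(K)}$ after dividing. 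The jump term is handled analogously with $S = \Pi_\gamma(\mathcal{J}_\gamma)\varphi_\gamma$ on $\mathcal{N}_\gamma$, using the third inequality of Lemma \ref{lemma:bubble_properties} together with the already-bounded element residual to absorb the volume contributions coming from integrating $\mathcal{R}_{K'}S$ over the two elements $K'\subset\mathcal{N}_\gamma$; standard scaling then gives $h_K^{1/p}\|\mathcal{J}_\gamma\|_{L^p(\gamma)}\lesssim$ the right-hand side of \eqref{eq:local_efficiency_temp} over $\mathcal{N}_K^*$.

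The genuinely new ingredient, and what I expect to be the main obstacle, is the term $h_K^{d+p(1-d)}$ present when $z$ lies in the interior of $K$. Here one must pick up the Dirac mass: I would test with $S = \varphi_K$ (the ordinary bubble, which satisfies $\varphi_K(z)\approx 1$ since $z$ is interior), giving $\langle\delta_z,S\rangle = \varphi_K(z) \approx 1$ on the left, while the right-hand side is $\int_K(\kappa\nabla e_T + \ldots)\cdot\nabla\varphi_K + \int_K\mathcal{R}_K\varphi_K$, estimated by $\|\nabla\varphi_K\|_{L^{p'}(K)}$ times the local error norms. A direct computation gives $\|\nabla\varphi_K\|_{L^{p'}(K)}\approx h_K^{-1}|K|^{1/p'} = h_K^{-1+d/p'} = h_K^{d-1-d/p}$, whence $1 \lesssim h_K^{d-1-d/p}(\|\nabla e_T\|_{L^p(K)}+\ldots) + h_K\|\mathcal{R}_K\|_{L^p(K)}\cdot h_K^{-1+\text{(scaling)}}$, i.e. $h_K^{(d/p)-(d-1)} = h_K^{(d+p(1-d))/p}\lesssim \|\nabla e_T\|_{L^p(K)}+\|e_T\|_{L^{dp/(d-p)}(K)}+\|\mathbf{e}_{\mathbf{u}}\|_{L^d(K)}$, which upon raising to the $p$-th power is exactly the bound for $h_K^{d+p(1-d)}$. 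One must verify carefully that the exponent bookkeeping works in both $d=2$ and $d=3$ and that $p > 2d/(d+1) - \epsilon$ (so that $p' < $ the critical exponent and $\varphi_K$ can legitimately serve as a $W^{1,p'}_0$ test function while keeping all scaling powers of the form that closes the argument); this is where the specific range of $p$ in the hypothesis is used. Collecting the three bounds, raising to the $p$-th power, and summing over the finitely many elements and sides in $\mathcal{N}_K^*$ yields \eqref{eq:local_efficiency_temp}, with a hidden constant depending only on shape regularity, $\kappa$, $p$, and $d$.
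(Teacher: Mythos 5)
Your overall strategy --- the elementwise residual representation of the error equation tested against suitably weighted bubble functions --- is the right one; note that the paper itself gives no proof here but defers entirely to \cite[Theorem 5.2]{ACFO:22}, so your attempt must be judged on its own. Two remarks on the residual and jump terms. First, the identity you start from is correct, but it is not obtained by ``using Galerkin orthogonality to replace $S-I_hS$ by $S$'': it is simply the difference of the continuous equation and the elementwise integration by parts of the discrete form, valid for every $S\in W_0^{1,p'}(\Omega)$. Second, you only carry out the element-residual and jump estimates for $z\notin K$ (resp.\ $z\notin\mathring{\mathcal{N}}_\gamma$) with the plain bubbles $\varphi_K,\varphi_\gamma$. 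When $z\in K$ or $z\in\mathring{\mathcal{N}}_\gamma$ the term $\langle\delta_z,S\rangle$ does not vanish for these choices, and you must use the weighted bubbles $\phi_K,\phi_\gamma$ of \eqref{def:bubble_element}--\eqref{def:bubble_side}, which vanish quadratically at $z$, together with the norm equivalences of Lemma \ref{lemma:bubble_properties}. You announce this in your preamble but never execute it; as written, those two cases are missing.

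The genuine gap is the term $h_K^{d+p(1-d)}$. You test with $S=\varphi_K$ and assert $\varphi_K(z)\approx 1$ ``since $z$ is interior''. This is false: $\varphi_K$ vanishes on all of $\partial K$, so $\varphi_K(z)$ can be arbitrarily small for $z$ interior to $K$ but close to a face, and your argument then only controls $\varphi_K(z)^p\,h_K^{d+p(1-d)}$, with a constant that degenerates as $z\to\partial K$ --- exactly the configuration a robust estimator must handle. (Your scaling $\|\nabla\varphi_K\|_{\mathbf{L}^{p'}(K)}\approx h_K^{d-1-d/p}=h_K^{-\sigma}$ with $\sigma=1-d/p'$ is correct; the argument it feeds is not.) The standard repair uses no bubble at all: take the nodal hat functions $\lambda_{\textsc{v}}$, $\textsc{v}\in\mathcal{V}(K)$. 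Each belongs to $Y_h$, so the residual vanishes on it (Galerkin orthogonality), which after the same elementwise integration by parts gives $\lambda_{\textsc{v}}(z)=-\sum_{K'}\int_{K'}\mathcal{R}_{K'}\lambda_{\textsc{v}}\,\mathrm{d}\mathbf{x}-\sum_{\gamma}\int_{\gamma}\mathcal{J}_{\gamma}\lambda_{\textsc{v}}\,\mathrm{d}\mathbf{s}$, the sums running over the vertex patch. Since $\sum_{\textsc{v}\in\mathcal{V}(K)}\lambda_{\textsc{v}}(z)=1$, some vertex satisfies $\lambda_{\textsc{v}}(z)\geq (d+1)^{-1}$, and H\"older's inequality with $\|\lambda_{\textsc{v}}\|_{L^{p'}(K')}\approx h_{K'}^{d/p'}$ and $\|\lambda_{\textsc{v}}\|_{L^{p'}(\gamma)}\approx h_{\gamma}^{(d-1)/p'}$ yields $h_K^{1-d/p'}\lesssim h_K\|\mathcal{R}_K\|_{L^p}+h_K^{1/p}\|\mathcal{J}_\gamma\|_{L^p(\gamma)}$ over that patch; since $1-d/p'=(d+p(1-d))/p$, the singular term is thus bounded by the indicator pieces already estimated in your Steps 1--2, hence by the right-hand side of \eqref{eq:local_efficiency_temp} --- and this is why the enlarged patch $\mathcal{N}_K^{*}$ appears in the statement. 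Without some such device, your treatment of the dominant term of the indicator does not close.
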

\begin{proof}
The proof of \eqref{eq:local_efficiency_temp} for $d=2$ is available in \cite[Theorem 5.2]{ACFO:22}. An extension of these arguments to the three dimensional scenario yield \eqref{eq:local_efficiency_temp} for $d=3$. For brevity, we skip details.
\end{proof}

\subsubsection{Local estimates for $\mathfrak{E}_{K}$}
We now investigate local efficiency bounds for the error indicator $\mathfrak{E}_{K}$ defined in \eqref{eq:local_indicator_darcy_I}.
\begin{theorem}[local estimate for $\mathfrak{E}_{K}$]\label{thm:efficiencydarcy}
Let $d\in\{2,3\}$. Let us assume that $\mathbf{f}_0 \in \mathbf{L}^2(\Omega)$ and that $\mathbf{u} \in \mathbf{L}^{4}(\Omega)$. Let $K\in\mathscr{T}_h$. In the framework of Theorem \ref{thm:convergence}, we have 
\begin{multline}\label{eq:local_efficiency_darcya}
\mathfrak{E}_{K}
\lesssim 
\||\mathbf{u}|\mathbf{u}-|\mathbf{u}_h|\mathbf{u}_h+\nabla e_{\mathsf{p}}\|_{\mathbf{L}^2(\mathcal{N}_K)}
+ 
h_K^{1+\frac{d}{2}-\frac{d}{p}}\| e_{T}\|_{L^{\frac{dp}{d-p}}(\mathcal{N}_K)}
+ \|\mathbf{e}_{\mathbf{u}}\|_{\mathbf{L}^2(\mathcal{N}_K)}
\\
+\sum_{K'\in\mathcal{N}_K}
\|\mathbf{v}_r\|_{\mathbf{L}^3(K')}+\|\mathbf{f}_1(T)\!-\!\mathbf{\Pi}_{K'}\mathbf{f}_1(T)\|_{\mathbf{L}^2(K')}\!+\!\|\mathbf{f}_0\!-\!\mathbf{\Pi}_{K'}\mathbf{f}_0\|_{\mathbf{L}^2(K')},
\end{multline}
where $\mathcal{N}_K$ is defined in \eqref{eq:patch} and $\mathbf{\Pi}_{K}$ is the orthogonal projection operator onto $[\mathbb{P}_{0}(K)]^{d}$. The hidden constant is independent of $(\mathbf{u},\mathsf{p},T)$ and $(\mathbf{u}_h,\mathsf{p}_h,T_h)$, the size of the elements in the mesh $\mathscr{T}_h$, and $\#\mathscr{T}_h$.
\end{theorem}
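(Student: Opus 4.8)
The plan is to adapt the classical residual--efficiency technique for mixed discretizations of the Darcy--Forchheimer system, in the spirit of \cite{https://doi.org/10.48550/arxiv.2202.11643}, to the present coupled setting, estimating the two constituents $\|\mathfrak{R}_K\|_{\mathbf{L}^2(K)}$ and $h_K^{1/3}\|\mathfrak{J}_\gamma\|_{L^3(\partial K\setminus\partial\Omega)}$ of $\mathfrak{E}_K$ separately. The tools are the residual identities already derived in the reliability analysis---the momentum identity \eqref{eq:error_equation_darcy_1} and the divergence identity \eqref{eq:lemma_00}---the standard element and side bubble functions $\varphi_K,\varphi_\gamma$ of \eqref{def:standard_bubbles} (since the Dirac source enters only the heat equation, no weighted bubbles are needed for the Darcy--Forchheimer residuals), the continuation operator $\Pi_\gamma$ of \cite{MR1650051}, and the properties collected in Lemma \ref{lemma:bubble_properties}.

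\emph{Element residual.} First I would observe that, since $\mathbf{f}(T)\in\mathbf{L}^{3/2}(\Omega)$, the momentum equation in \eqref{eq:modelweak} holds a.e.\ in $\Omega$; subtracting it from the definition \eqref{eq:residuals_Darcy_1} of $\mathfrak{R}_K$ gives the elementwise identity $\mathfrak{R}_K=\nu\mathbf{e}_{\mathbf{u}}+(|\mathbf{u}|\mathbf{u}-|\mathbf{u}_h|\mathbf{u}_h)+\nabla e_{\mathsf{p}}-(\mathbf{f}_1(T)-\mathbf{f}_1(T_h))$ on $K$ (equivalently one may test \eqref{eq:error_equation_darcy_1} with $\mathbf{v}=\varphi_K\mathbf{\Pi}_K\mathfrak{R}_K$ and invoke Lemma \ref{lemma:bubble_properties}, which additionally surfaces the data--oscillation terms). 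Then, using that $\mathbf{u}\in\mathbf{L}^4(\Omega)$---so that $|\mathbf{u}|\mathbf{u}-|\mathbf{u}_h|\mathbf{u}_h+\nabla e_{\mathsf{p}}\in\mathbf{L}^2(K)$---the bounds $0<\nu_{-}\le\nu\le\nu_{+}$, the Lipschitz property of $\mathbf{f}_1$, and H\"older's inequality on $K$ combined with $W_0^{1,p}(\Omega)\hookrightarrow L^{\frac{dp}{d-p}}(\Omega)$---which, localized, costs the weight $h_K^{1+\frac{d}{2}-\frac{d}{p}}$, because $\tfrac{dp}{d-p}>2$ throughout the admissible range of $p$---I would arrive at
\[
\|\mathfrak{R}_K\|_{\mathbf{L}^2(K)}\lesssim\|\mathbf{e}_{\mathbf{u}}\|_{\mathbf{L}^2(K)}+\||\mathbf{u}|\mathbf{u}-|\mathbf{u}_h|\mathbf{u}_h+\nabla e_{\mathsf{p}}\|_{\mathbf{L}^2(K)}+h_K^{1+\frac{d}{2}-\frac{d}{p}}\|e_T\|_{L^{\frac{dp}{d-p}}(K)},
\]
together with the oscillation terms $\|\mathbf{f}_0-\mathbf{\Pi}_K\mathbf{f}_0\|_{\mathbf{L}^2(K)}$ and $\|\mathbf{f}_1(T)-\mathbf{\Pi}_K\mathbf{f}_1(T)\|_{\mathbf{L}^2(K)}$ (the second obtained from $\|\mathbf{f}_1(T_h)-\mathbf{\Pi}_K\mathbf{f}_1(T_h)\|_{\mathbf{L}^2(K)}$ via a triangle inequality and the Lipschitz bound, the resulting $\|e_T\|$--contribution being absorbed), the oscillation of $\nu$ entering only at higher order in $h_K$.

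\emph{Interelement residual.} For an interior side $\gamma$ I would set $\mathsf{q}_\gamma:=\varphi_\gamma\,\Pi_\gamma(|\mathfrak{J}_\gamma|\mathfrak{J}_\gamma)$, which, after normalizing its mean, belongs to $M$, is supported in $\mathcal{N}_\gamma$, vanishes on every side of $\mathscr{S}$ other than $\gamma$, and equals $\varphi_\gamma\,|\mathfrak{J}_\gamma|\mathfrak{J}_\gamma$ on $\gamma$. Testing \eqref{eq:lemma_00} with $\mathsf{q}=\mathsf{q}_\gamma$ and $\mathsf{q}_h=R_h\mathsf{q}_\gamma$ and invoking \eqref{eq:aux_problem_apost} identifies its right--hand side with $\int_\Omega\nabla\mathsf{q}_\gamma\cdot\mathbf{v}_r\,\mathrm{d}\mathbf{x}$, while Lemma \ref{lemma:bubble_properties} gives $\|\mathfrak{J}_\gamma\|_{L^3(\gamma)}^3\lesssim\int_\gamma\varphi_\gamma|\mathfrak{J}_\gamma|^3\mathrm{d}\mathbf{s}=\int_\Omega\nabla\mathsf{q}_\gamma\cdot\mathbf{e}_{\mathbf{u}}\,\mathrm{d}\mathbf{x}$, and an inverse estimate together with the third bound of Lemma \ref{lemma:bubble_properties} controls $\|\nabla\mathsf{q}_\gamma\|_{\mathbf{L}^{3/2}(\mathcal{N}_\gamma)}\lesssim h_\gamma^{-1/3}\|\mathfrak{J}_\gamma\|_{L^3(\gamma)}^2$. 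Combining these bounds, splitting $\mathbf{e}_{\mathbf{u}}=\mathbf{v}_r+(\mathbf{e}_{\mathbf{u}}-\mathbf{v}_r)$, and using H\"older's inequality and the Sobolev embeddings of $W_0^{1,p}(\Omega)$, I would bound $h_\gamma^{1/3}\|\mathfrak{J}_\gamma\|_{L^3(\gamma)}$ by $\|\mathbf{e}_{\mathbf{u}}\|_{\mathbf{L}^2(\mathcal{N}_\gamma)}+\sum_{K'\in\mathcal{N}_\gamma}\|\mathbf{v}_r\|_{\mathbf{L}^3(K')}$. Since $\mathcal{N}_\gamma\subset\mathcal{N}_K$ whenever $\gamma\in\mathscr{S}_K$, summing over the finitely many sides of $K$ and collecting with the element--residual bound will yield \eqref{eq:local_efficiency_darcya}.

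The hard part will be this interelement term: since $\mathbf{u}$ only lies in $\mathbf{L}^3(\Omega)$, the normal--jump residual $\llbracket\mathbf{u}_h\cdot\mathbf{n}\rrbracket$ cannot be tested in an energy pairing and must be handled through the $\mathbf{X}$--$M$ duality, which is precisely what forces the auxiliary function $\mathbf{v}_r$ and the Cl\'ement operator $R_h$ into the lower bound and hence the appearance of $\|\mathbf{v}_r\|_{\mathbf{L}^3}$ in \eqref{eq:local_efficiency_darcya}; the bookkeeping that reconciles the resulting $\mathbf{L}^3$--pairing with the $\mathbf{L}^2$--norm of $\mathbf{e}_{\mathbf{u}}$ and keeps the powers of $h_K$ and $h_\gamma$ consistent follows \cite{https://doi.org/10.48550/arxiv.2202.11643}, and I would only sketch it.
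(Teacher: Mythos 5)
Your proposal is correct and follows essentially the same route as the paper: the element residual is handled via the momentum error identity \eqref{eq:error_equation_darcy_1}/\eqref{eq:error_equation_darcy_3} together with the element bubble $\varphi_K$, the Lipschitz property of $\mathbf{f}_1$, and a local H\"older step producing the weight $h_K^{1+\frac{d}{2}-\frac{d}{p}}$, while the normal-jump term is bounded by testing \eqref{eq:lemma_00} with a mean-normalized bubble extension of the (piecewise constant) jump and using \eqref{eq:aux_problem_apost} to convert the pairing with $\mathbf{e}_{\mathbf{u}}$ into one with $\mathbf{v}_r$, exactly as in the paper's Step 2. The only deviations are cosmetic (using $|\mathfrak{J}_\gamma|\mathfrak{J}_\gamma$ in place of $\mathfrak{J}_\gamma$, and retaining a redundant $\|\mathbf{e}_{\mathbf{u}}\|_{\mathbf{L}^2(\mathcal{N}_\gamma)}$ term in the jump bound that the paper's orthogonality $\int_\Omega\nabla q\cdot\mathbf{z}\,\mathrm{d}\mathbf{x}=0$ removes entirely), and the mean-correction bookkeeping you defer is precisely the point the paper resolves by its special choice of $q_h$ with $(q_h,1)_{L^2(\gamma)}=-(\mathfrak{q}_\Omega,1)_{L^2(\gamma)}$.
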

\begin{proof}
We begin the proof by rewriting \eqref{eq:error_equation_darcy_1} as follows:
\begin{multline}
\label{eq:error_equation_darcy_3}
\int_\Omega 
\left(
\nu \mathbf{e}_{\mathbf{u}} +  |\mathbf{u}|\mathbf{u}-|\mathbf{u}_h|\mathbf{u}_h 
+
\nabla e_{\mathsf{p}}
\right)
\cdot\mathbf{v}\mathrm{d}\mathbf{x} 
- \int_{\Omega} (\mathbf{f}(T)-\mathbf{f}(T_h)) \cdot \mathbf{v} \mathrm{d}\mathbf{x}
\\
= \sum_{K\in\mathscr{T}_h}
\left(
\int_K 
\left[ 
(\mathbf{\Pi}_{K}\mathbf{f}(T_h)-\nu\mathbf{u}_h-|\mathbf{u}_h|\mathbf{u}_h-\nabla \mathsf{p}_h)  + (\mathbf{f}(T_h)-\mathbf{\Pi}_{K}\mathbf{f}(T_h)
\right]
\cdot\mathbf{v}\mathrm{d}\mathbf{x}
\right),
\end{multline}
which holds for every $\mathbf{v} \in \mathbf{L}^3(\Omega)$.

Let us now proceed on the basis of three steps.

\emph{Step 1.} Let $K\in\mathscr{T}_h$. We bound the term  $\|\mathfrak{R}_{K}\|_{\mathbf{L}^2(K)}$ in \eqref{eq:local_indicator_darcy_I}. To accomplish this task, we begin with a basic application of the triangle inequality and write
\begin{equation}\label{eq:residual_triangle_darcy}
\|\mathfrak{R}_{K}\|_{\mathbf{L}^2(K)} 
\leq
\|\mathbf{\Pi}_{K}\mathbf{f}(T_h)-\nu\mathbf{u}_h-|\mathbf{u}_h|\mathbf{u}_h-\nabla \mathsf{p}_h\|_{\mathbf{L}^2(K)}
+ 
\|\mathbf{f}(T_h)-\mathbf{\Pi}_{K}\mathbf{f}(T_h)\|_{\mathbf{L}^2(K)}.
\end{equation}
It thus suffices to control the first term on the right-hand side of \eqref{eq:residual_triangle_darcy}. To do this, we set $\mathbf{v} = \varphi_K \tilde{\mathfrak{R}}_{K} \in \mathbf{L}^3(\Omega)$ in \eqref{eq:error_equation_darcy_3}, where $\tilde{\mathfrak{R}}_{K}=(\mathbf{\Pi}_{K}\mathbf{f}(T_h)-\nu\mathbf{u}_h-|\mathbf{u}_h|\mathbf{u}_h-\nabla \mathsf{p}_h)|^{}_K$. Basic properties of the bubble function $\varphi_K$ combined with H\"older's inequality yield
\begin{multline}
\|\tilde{\mathfrak{R}}_K\|_{\mathbf{L}^2(K)}
\! \lesssim 
\|\mathbf{e}_{\mathbf{u}}\|_{\mathbf{L}^2(K)} +\||\mathbf{u}|\mathbf{u}-|\mathbf{u}_h|\mathbf{u}_h+\nabla e_{\mathsf{p}}\|_{\mathbf{L}^2(K)}
\\
+ 
h_K^{1+\frac{d}{2}-\frac{d}{p}}\| e_{T}\|_{L^{\frac{dp}{d-p}}(K)}
+
\|\mathbf{f}(T_h)-\mathbf{\Pi}_{K}\mathbf{f}(T_h)\|_{\mathbf{L}^2(K)},
 \label{eq:aux_tildeRk}
\end{multline}
upon utilizing the Lipschitz property that $\mathbf{f}_1$ satisfies. To control the term $\|\mathbf{f}(T_h)-\mathbf{\Pi}_{K}\mathbf{f}(T_h)\|_{\mathbf{L}^2(K)}$, we invoke \eqref{eq:external_density_force}, the Lipschitz property that $\mathbf{f}_1$ satisfies, the triangle inequality, and H\"older's inequality to arrive at
\begin{multline}
\|\mathbf{f}(T_h)-\mathbf{\Pi}_{K}\mathbf{f}(T_h)\|_{\mathbf{L}^2(K)}
\leq  
\|\mathbf{f}_0-\mathbf{\Pi}_{K}\mathbf{f}_0\|_{\mathbf{L}^2(K)}
+
\|\mathbf{f}_1(T_h)-\mathbf{\Pi}_{K} \mathbf{f}_1(T_h)\|_{\mathbf{L}^2(K)}
\\
\lesssim
\|\mathbf{f}_0-\mathbf{\Pi}_{K}\mathbf{f}_0\|_{\mathbf{L}^2(K)}
+
 \|\mathbf{f}_1(T)-\mathbf{\Pi}_{K}\mathbf{f}_1(T)\|_{\mathbf{L}^2(K)}
 +
h_K^{1+\frac{d}{2}-\frac{d}{p}}\| e_{T}\|_{L^{\frac{dp}{d-p}}(K)}.
  \label{eq:aux_tildeRk_2}
\end{multline}
To obtain the last bound, we have also utilized the estimate $\|\mathbf{\Pi}_K \mathbf{S}-\mathbf{\Pi}_K \mathbf{R} \|_{\mathbf{L}^{r}(K)} \leq \|\mathbf{S}-\mathbf{R}\|_{\mathbf{L}^{r}(K)}$, which holds for every $r \in (1,\infty)$ and $\mathbf{S}, \mathbf{R} \in \mathbf{L}^r(\Omega)$. Replacing \eqref{eq:aux_tildeRk_2} into \eqref{eq:aux_tildeRk}, we obtain
\begin{multline}\label{eq:estimate_residual_darcy}
\|\tilde{\mathfrak{R}}_K\|_{\mathbf{L}^2(K)}
\lesssim
\|\mathbf{e}_{\mathbf{u}}\|_{\mathbf{L}^2(K)} 
+\||\mathbf{u}|\mathbf{u}-|\mathbf{u}_h|\mathbf{u}_h+\nabla e_{\mathsf{p}}\|_{\mathbf{L}^2(K)}
\\
h_K^{1+\frac{d}{2}-\frac{d}{p}}\| e_{T}\|_{L^{\frac{dp}{d-p}}(K)}
+
\|\mathbf{f}_0-\mathbf{\Pi}_K\mathbf{f}_0\|_{\mathbf{L}^2(K)} +\|\mathbf{f}_1(T)-\mathbf{\Pi}_K\mathbf{f}_1(T)\|_{\mathbf{L}^2(K)}.
\end{multline}
The desired estimate for the term $\| \mathfrak{R}_K\|_{\mathbf{L}^2(K)}$ thus follows from \eqref{eq:residual_triangle_darcy} and \eqref{eq:estimate_residual_darcy}.

\emph{Step 2.} Let $K\in\mathscr{T}_h$ and $\gamma\in\mathscr{S}_K$. We now bound the term $h_{K}^{1/3}\|\mathfrak{J}_{\gamma}\|_{L^3(\gamma)}$ in \eqref{eq:local_indicator_darcy_I}. To accomplish this task, we define $\mathfrak{q}:= \varphi_\gamma\Pi_{\gamma}(\mathfrak{J}_{\gamma})$. Here, $\varphi_\gamma$ denotes the bubble function defined in \eqref{def:standard_bubbles} and $\Pi_{\gamma}$ denotes the continuation operator introduced in \S \ref{sec:efficiency_estimates}. We would like to set $q = \mathfrak{q}$ in \eqref{eq:lemma_00}. Unfortunately, $\mathfrak{q}$  does not necessarily belongs to $L_0^2(\Omega)$. Therefore, we set $q = \mathfrak{q} - \mathfrak{q}_{\Omega} \in M$, where $\mathfrak{q}_{\Omega}$ corresponds to the mean of $\mathfrak{q}$ in $\Omega$. On the other hand, we consider a discrete function $q_h \in M_h$ such that, over the particular internal side $\gamma$, $(q_h,1)_{L^2(\gamma)} = - (\mathfrak{q}_{\Omega},1)_{L^2(\gamma)}$. Setting $q_h$ in \eqref{eq:lemma_00} yields
\begin{equation*}
\|\mathfrak{J}_{\gamma}\|_{\mathbf{L}^2(\gamma)}^2
\lesssim
\sum_{K'\in\mathcal{N}_\gamma}\int_{K'} \nabla q\cdot \mathbf{e}_{\mathbf{u}}\mathrm{d}\mathbf{x}
=
\sum_{K'\in\mathcal{N}_\gamma}\int_{K'}\nabla q\cdot\mathbf{v}_r  \mathrm{d}\mathbf{x},
\end{equation*}
where we have utilized that $\mathbf{e}_{\mathbf{u}} = \mathbf{z} + \mathbf{v}_{r}$ and that $\int_{\Omega} \nabla q\cdot \mathbf{z}\mathrm{d}\mathbf{x} = 0$; the latter being a consequence of the fact that $q \in M$. Let us now utilize standard properties of the bubble function $\varphi_\gamma$ and inverse estimates to obtain
\begin{equation*}
h_{\gamma}^{1/3}\|\mathfrak{J}_{\gamma}\|_{\mathbf{L}^3(\gamma)}\lesssim
\sum_{K'\in\mathcal{N}_\gamma}\|\mathbf{v}_r\|_{\mathbf{L}^3(K')},
\end{equation*}
upon utilizing the bound $\|\mathbf{v}_r\|_{\mathbf{L}^2(K')} \leq|K'|^{1/6} \|\mathbf{v}_r\|_{\mathbf{L}^3(K')}$. 

\emph{Step 3}. A collection of the bounds derived in Steps 1 and 2 yield the desired estimate. This concludes the proof.
\end{proof}

\begin{remark}[\GC{higher} integrability on $\mathbf{u}$]\label{eq:remark}
In Theorem \ref{thm:efficiencydarcy}, we have assumed that $\mathbf{u} \in \mathbf{L}^4(\Omega)$ and that $\mathbf{f}_0 \in \mathbf{L}^2(\Omega)$. Under this assumption, it can thus be deduced that $\mathbf{f}(T) - \nu \mathbf{u} - |\mathbf{u}|\mathbf{u} = \nabla \mathsf{p} \in \mathbf{L}^2(\Omega)$. Observe that $T \in L^2(\Omega)$. Consequently, the construction of the finite element spaces allows us to \GC{guarantee that}, for every $K \in \T_h$, the term $\| |\mathbf{u}|\mathbf{u}-|\mathbf{u}_h|\mathbf{u}_h+\nabla e_{\mathsf{p}} \|_{\mathbf{L}^2(K)}$ is well-defined.
\end{remark}

\begin{remark}[auxiliary estimates]
\label{eq:lemma_press_nonlinear}
We notice that the right-hand side of the \GC{efficiency} estimate \eqref{eq:local_efficiency_darcya} contains the local terms $\||\mathbf{u}|\mathbf{u}-|\mathbf{u}_h|\mathbf{u}_h+\nabla e_{\mathsf{p}}\|_{\mathbf{L}^2(\mathcal{N}_K)}$ and $\| \mathbf{v}_r \|_{\mathbf{L}^3(\mathcal{N}_K)}$. The global version of these terms \emph{are not contained} on the left-hand side of the global reliability bound \eqref{eq:upper_bound}. The control of $\| \mathbf{v}_r \|_{\mathbf{L}^3(\Omega)}$ follows immediately from \eqref{eq:bound_vr}: 
$\| \mathbf{v}_r \|_{\mathbf{L}^3(\Omega)} \lesssim \sum_{K \in \T_h} \mathfrak{E}_{K}$. In what follows, we briefly elaborate an argument that allows us to control $\||\mathbf{u}|\mathbf{u}-|\mathbf{u}_h|\mathbf{u}_h+\nabla e_{\mathsf{p}}\|_{\mathbf{L}^2(\Omega)}$. Set $\mathbf{v}=|\mathbf{u}|\mathbf{u}-|\mathbf{u}_h|\mathbf{u}_h+\nabla e_{\mathsf{p}} \in \mathbf{L}^2(\Omega)$ in \eqref{eq:error_equation_darcy_1}. We notice that this is possible because of the arguments in Remark \ref{eq:remark}. We thus invoke H\"older's inequality and the Lipschitz property of $\mathbf{f}_1$ to arrive at
\begin{equation*}
\label{eq:local_efficiency_press_nolinear}
\||\mathbf{u}|\mathbf{u} -|\mathbf{u}_h|\mathbf{u}_h\!+\!\nabla e_{\mathsf{p}}\|_{\mathbf{L}^2(\Omega)}\lesssim
\|\mathbf{e}_{\mathbf{u}}\|_{\mathbf{L}^2(\Omega)}
+ \mathfrak{E}_{\T} + \| \nabla e_T \|_{\mathbf{L}^p(\Omega)} \lesssim \mathsf{E}_{\T},
\end{equation*}
upon utilizing the global reliability bound \eqref{eq:upper_bound}.
\end{remark}


\section{Numerical experiments}
\label{sec:numericalexperiments}

We conduct a series of two-dimensional numerical examples that illustrate the performance of the error estimator $\mathsf{E}_{\T}$ defined in \eqref{def:total_error_estimator}. These examples have been carried out with the help of a code that we implemented using \texttt{C++}. All matrices have been assembled exactly and global linear systems were solved using the multifrontal massively parallel sparse direct solver (MUMPS) \cite{MUMPS1,MUMPS2}. The right-hand sides, local indicators, and the error estimator were computed by a quadrature formula which is exact for polynomials of degree 19. To visualize finite element approximations, we have used the open source application ParaView \cite{Ahrens2005ParaViewAE,Ayachit2015ThePG}.

For a given partition $\mathscr{T}_h$, we solve the discrete system \eqref{eq:model_discrete} in $\mathbf{X}_{h}\times M_h\times Y_h$ using the iterative strategy described in \textbf{Algorithm 1}. Once a discrete solution is obtained, we compute, for each $K \in \mathscr{T}_h$, the local error indicator $\mathsf{E}_{K}$, defined by
\begin{equation}\label{def:total_indicator}
\mathsf{E}_K:=\mathcal{E}_{p,K}+\mathfrak{E}_{K},
\end{equation}
to drive the adaptive procedure described in \textbf{Algorithm 2}. A sequence of adaptively refined meshes is thus generated from the initial meshes shown in Figure \ref{fig:mesh}.

\begin{figure}[!ht]
\centering
\hspace{-1.0cm}
\begin{minipage}[b]{0.35\textwidth}\centering
\includegraphics[width=1.8cm,height=1.8cm,scale=0.66]{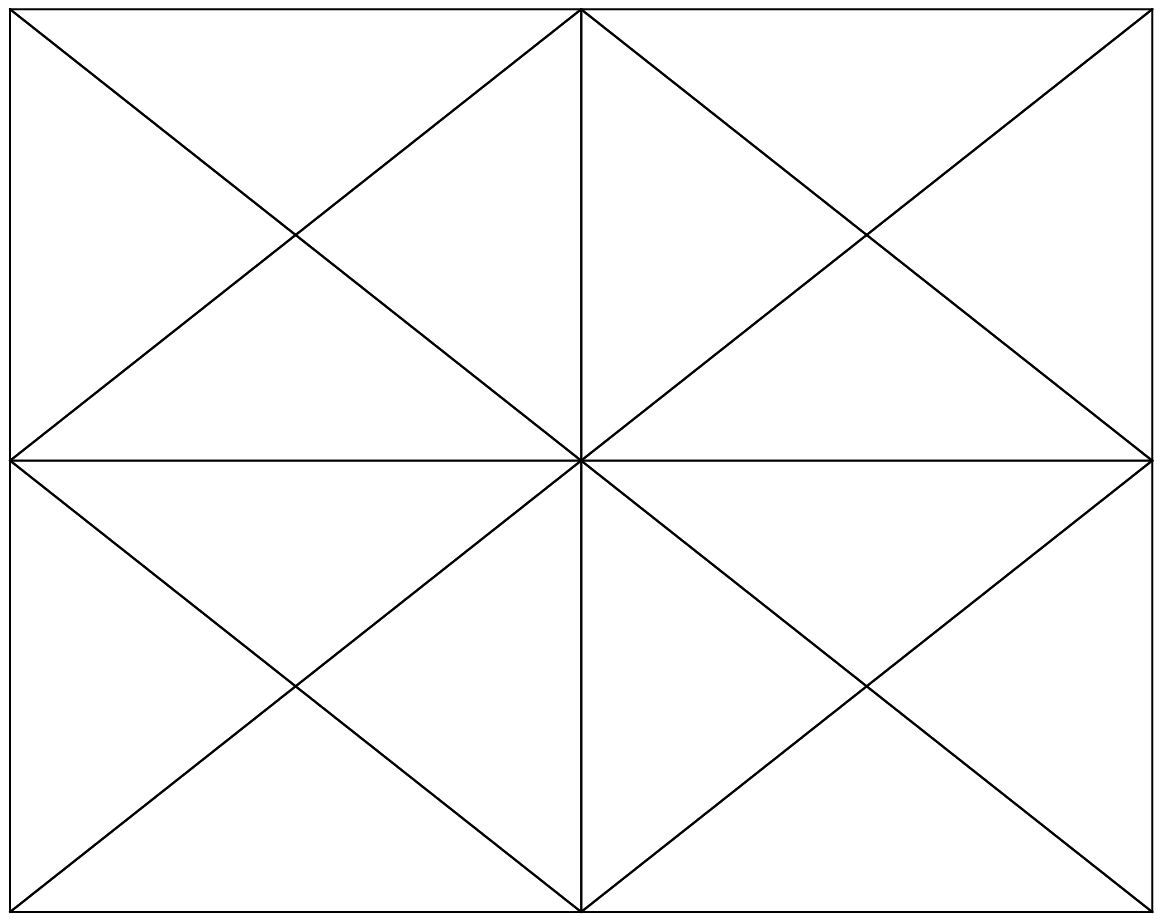} \\
\tiny{(A.1)}
\end{minipage}
\qquad
\begin{minipage}[b]{0.35\textwidth}\centering
\includegraphics[width=1.8cm,height=1.8cm,scale=0.66]{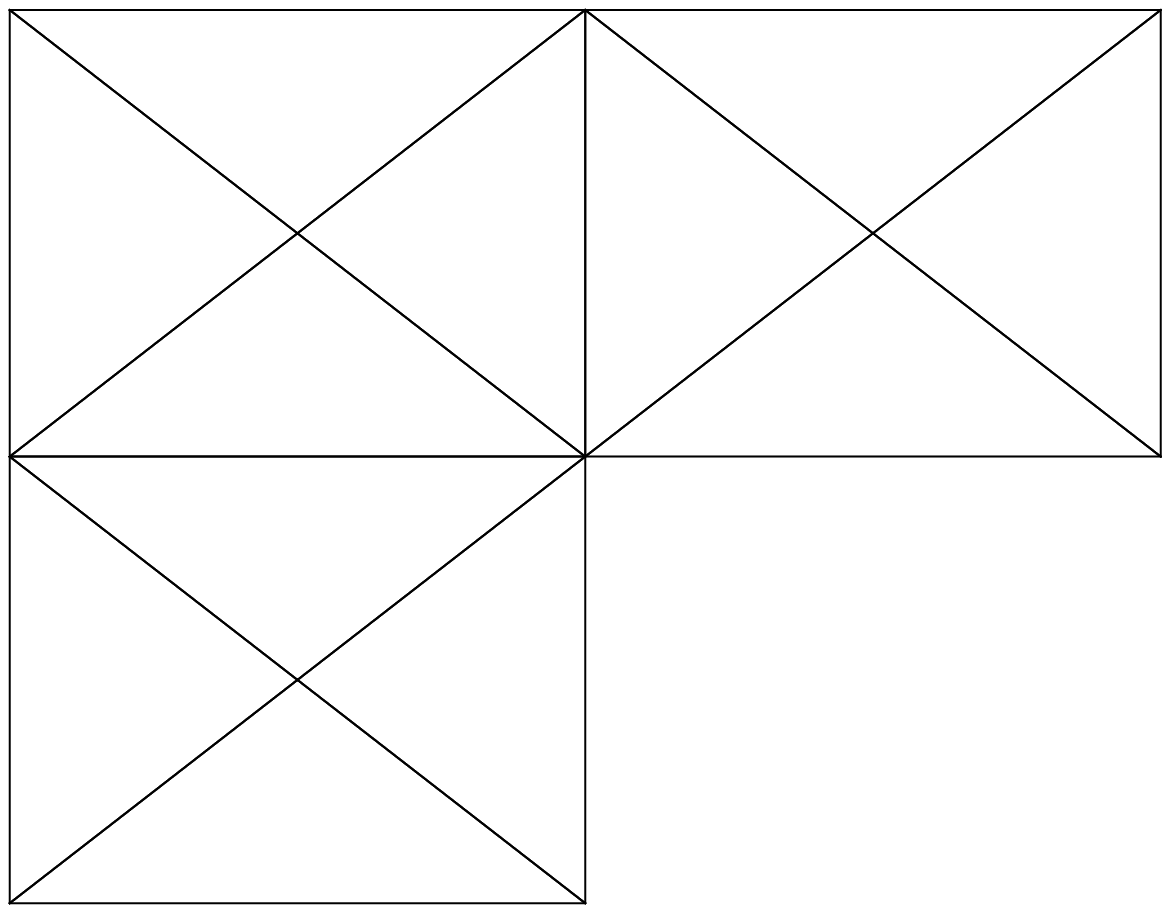} \\
\qquad \tiny{(A.2)}
\end{minipage}
\caption{The initial meshes used in the adaptive algorithm, \emph{Algorithm} 1, when \textrm{(A.1)} $\Omega = (0,1)^2$ and \textrm{(A.2)} $\Omega=(-1,1)^2 \setminus[0,1)\times (-1,0]$.}
\label{fig:mesh}
\end{figure}

In some of the numerical examples that we perform we go beyond the presented theory and include a series of Dirac delta sources on the right-hand side of the temperature equation. To be precise, we consider $\textnormal{g} = \sum_{z\in\mathcal{D}}\delta_z$, where $\mathcal{D}$ denotes a finite subset of $\Omega$ with cardinality $\#\mathcal{D}$. Within this setting, we \GC{suitably} modify the error estimator $\mathcal{E}_{p,\T}$, associated to the discretization of the heat equation, as follows:
\begin{equation}\label{eq:new_total_estimator_temp}
\mathcal{E}_{p,\T}:=\left( \sum_{K\in\GC{\mathscr{T}_h}} \mathcal{E}_{p,K}^p\right)^{\frac{1}{p}},\quad \tfrac{4}{3}-\epsilon<p<2,
\end{equation}
for some $\epsilon >0$. For each $K\in\mathscr{T}_h$, the local error indicators $\mathcal{E}_{p,K}$ are given now as follows: if $z\in\mathcal{D}\cap K$ and $z$ is not a vertex of $K$, then
\begin{equation}
\label{eq:local_indicator_tempI_seriesdirac}
\mathcal{E}_{p,K}:=
\left[
\sum_{z\in\mathcal{D}\cap K}h_K^{2-p} 
+
h_K^p \| \mathcal{R}_K
\|_{L^p(K)}^p 
+
h_K \| \mathcal{J}_{\gamma}
\|_{L^p(\partial K \setminus \partial \Omega)}^p
\right]^{\frac{1}{p}}.
\end{equation}
If $z\in\mathcal{D}\cap K$ and $z$ is a vertex of $K$, then
\begin{equation}\label{eq:local_indicatorII_seriesdirac}
\mathcal{E}_{p,K}:=
\left[ 
h_K^p \| \mathcal{R}_K  \|_{L^p(K)}^p 
+
h_K \|
\mathcal{J}_{\gamma}
\|_{L^p(\partial K \setminus \partial \Omega)}^p
\right]^{\frac{1}{p}}.
\end{equation}
If $\mathcal{D}\cap K=\emptyset$, then the indicator $\mathcal{E}_{p,K}$ is defined as in \eqref{eq:local_indicatorII_seriesdirac}. We notice that the previous modification is not needed if $\#\mathcal{D}=1$; \eqref{eq:new_total_estimator_temp} and \eqref{eq:error_estimator_heat} coincide.

\begin{algorithm}[ht]
\caption{\textbf{Iterative Scheme}}
\label{Algorithm1}
\textbf{Input:} Initial guess $(\mathbf{u}_{h}^{0},\mathsf{p}_{h}^{0},T_{h}^{0}) \in \mathbf{X}_h \times M_h \times Y_h$ and $\textrm{tol} = 10^{-8}$;
\\
$\boldsymbol{1}$: For $i\geq 0$, find $(\mathbf{u}_{h}^{i+1}, \mathsf{p}_{h}^{i+1}) \in \mathbf{X}_{h}\times M_h$ such that
\begin{equation*}
\begin{array}{rcll}
\displaystyle\int_\Omega (\nu\mathbf{u}_{h}^{i+1}\cdot \mathbf{v}_h+|\mathbf{u}_{h}^{i}|\mathbf{u}_{h}^{i+1}\cdot \mathbf{v}_h +  \nabla\mathsf{p}_{h}^{i+1}\cdot \mathbf{v}_h)\mathrm{d}\mathbf{x}& = & 	\displaystyle\int_\Omega\mathbf{f}(T_{h}^{i})\cdot \mathbf{v}_h\mathrm{d}\mathbf{x} \quad &\forall \mathbf{v}_h\in \mathbf{X}_h,\\
\displaystyle\int_\Omega \nabla\mathsf{q}_h\cdot\mathbf{u}_{h}^{i+1}\mathrm{d}\mathbf{x} & = & 0 \quad &\forall \mathsf{q}_h\in M_h.
\end{array}
\end{equation*}
\\
Then, $T_{h}^{i+1} \in Y_h$ is found as the solution to
\[
\displaystyle\int_\Omega(\kappa\nabla T_{h}^{i+1}\cdot \nabla S_h - T_{h}^{i+1} \mathbf{u}_{h}^{i+1}\cdot\nabla S_h)\mathrm{d}\mathbf{x}  =  
\langle \delta_{z}, S_h\rangle \qquad \forall S_h\in Y_h.
\]
$\boldsymbol{2}$: If $|(\mathbf{u}_{h}^{i+1}, \mathsf{p}_{h}^{i+1}, T_{h}^{i+1})-(\mathbf{u}_{h}^{i}, \mathsf{p}_{h}^{i}, T_{h}^{i} )| > \textrm{tol}$,  set $i \leftarrow i + 1$ and go to step $\boldsymbol{1}$. Otherwise, return $(\mathbf{u}_{h}, \mathsf{p}_{h}, T_{h}) = (\mathbf{u}_{h}^{i+1}, \mathsf{p}_{h}^{i+1}, T_{h}^{i+1} )$. Here, $|\cdot|$ denotes the Euclidean norm.
\end{algorithm}

\begin{algorithm}[ht]
\caption{\textbf{Adaptive Algorithm.}}
\label{Algorithm2}
\textbf{Input:} Initial mesh $\mathscr{T}_{0}$ and data $\nu$, $\kappa$, and $\mathbf{f}$;
\\
$\boldsymbol{1}$: Solve the discrete problem \eqref{eq:model_discrete} by using \textbf{Algorithm} \ref{Algorithm1};
\\
$\boldsymbol{2}$: For each $K\in\mathscr{T}_{h}$ compute the local error indicator $\mathsf{E}_K$ defined in \eqref{def:total_indicator};  
\\
$\boldsymbol{3}$: Mark an element $K\in\mathscr{T}_{h}$ for refinement if
\begin{equation*}
\mathsf{E}_K>\tfrac{1}{2}\max_{K'\in \mathscr{T}_{h}} \mathsf{E}_{K'};
\end{equation*}
$\boldsymbol{4}$: From step $\boldsymbol{3}$, construct a new mesh using a longest edge bisection algorithm. Set $i \leftarrow i + 1$ and go to step $\boldsymbol{1}$.
\end{algorithm}

\subsection{Convex domain}
\label{subsec:convex_domain}
Let $\Omega=(0,1)^2$, $\kappa=1$, $\nu(x_1,x_2)=\sin(x_1 x_2)+1.1$, $\mathcal{D} = \{ (0.25,0.25), (0.25,0.75), (0.75,0.25), \GC{(0.75,0.75)}\}$, $\mathbf{f}_0(x_{1},x_{2})=(1,1)^{\intercal}$, and $\mathbf{f}_1(s)=(s,s)^{\intercal}$. In Figure \ref{fig:test_01}, we report the results obtained for Example 1. We observe that optimal experimental rates of convergence are attained for all the values of the parameter $p$ that we considered: $p \in\{1.0,1.2, 1.4, 1.6, 1.8\}$. We also observe that most of the refinement is concentrated around the singular source points.

\begin{figure}[!ht]
\centering
\psfrag{Ndof(-1/2)}{\Large $\text{Ndof}^{-1/2}$}
\psfrag{Est p=1.0}{\Large $p=1.0$}
\psfrag{Est p=1.2}{\Large $p=1.2$}
\psfrag{Est p=1.4}{\Large $p=1.4$}
\psfrag{Est p=1.6}{\Large $p=1.6$}
\psfrag{Est p=1.8}{\Large $p=1.8$}

\begin{minipage}[b]{0.327\textwidth}\centering
\scriptsize{\qquad Estimator $\mathsf{E}_{\T}$}
\includegraphics[trim={0 0 0 0},clip,width=2.9cm,height=3.2cm,scale=0.66]{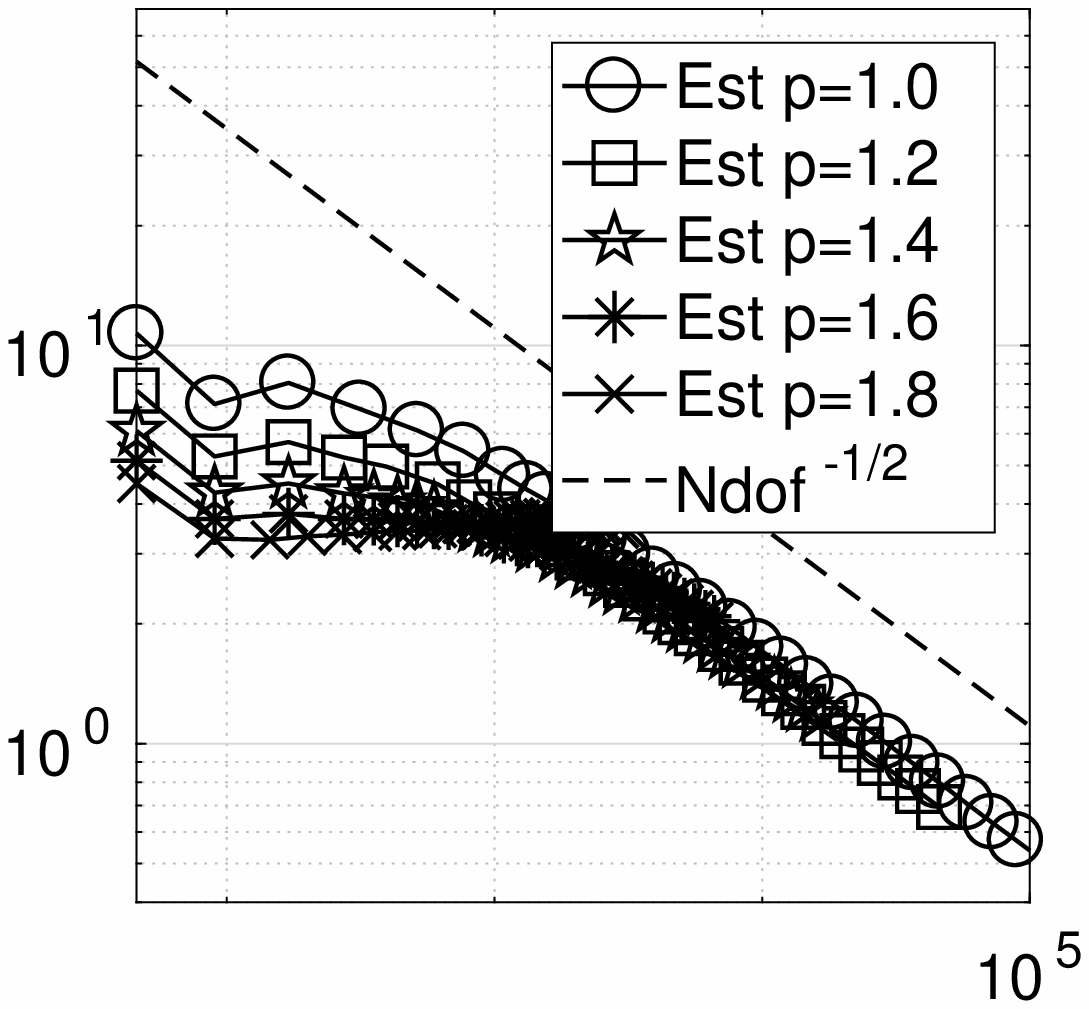} \\
\qquad \tiny{(B.1)}
\end{minipage}
\begin{minipage}[b]{0.327\textwidth}\centering
\includegraphics[width=3.7cm,height=3.5cm,scale=0.66]{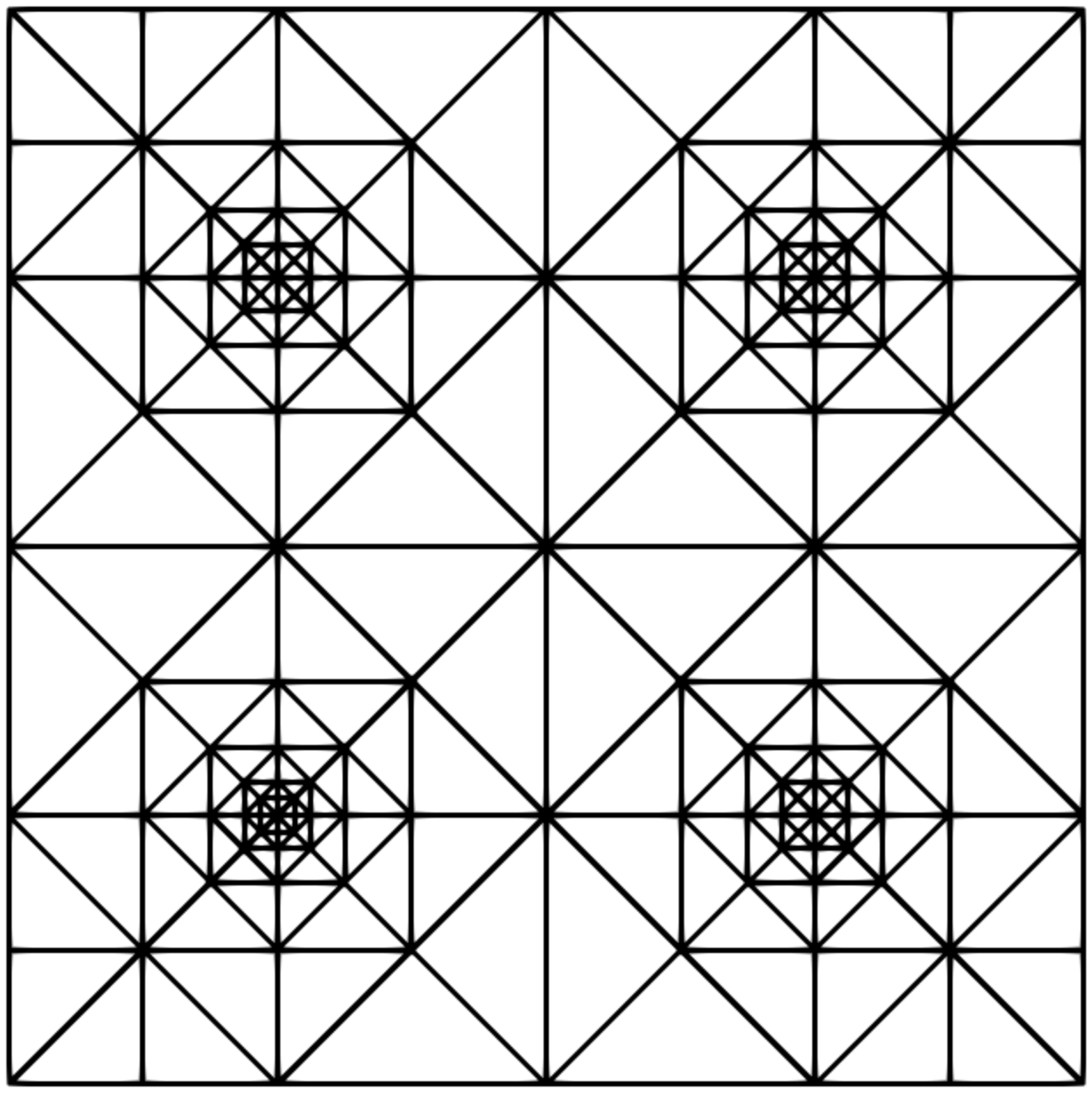} \\
\qquad \tiny{(B.2)}
\end{minipage}
\begin{minipage}[b]{0.327\textwidth}\centering
\includegraphics[width=3.7cm,height=3.5cm,scale=0.66]{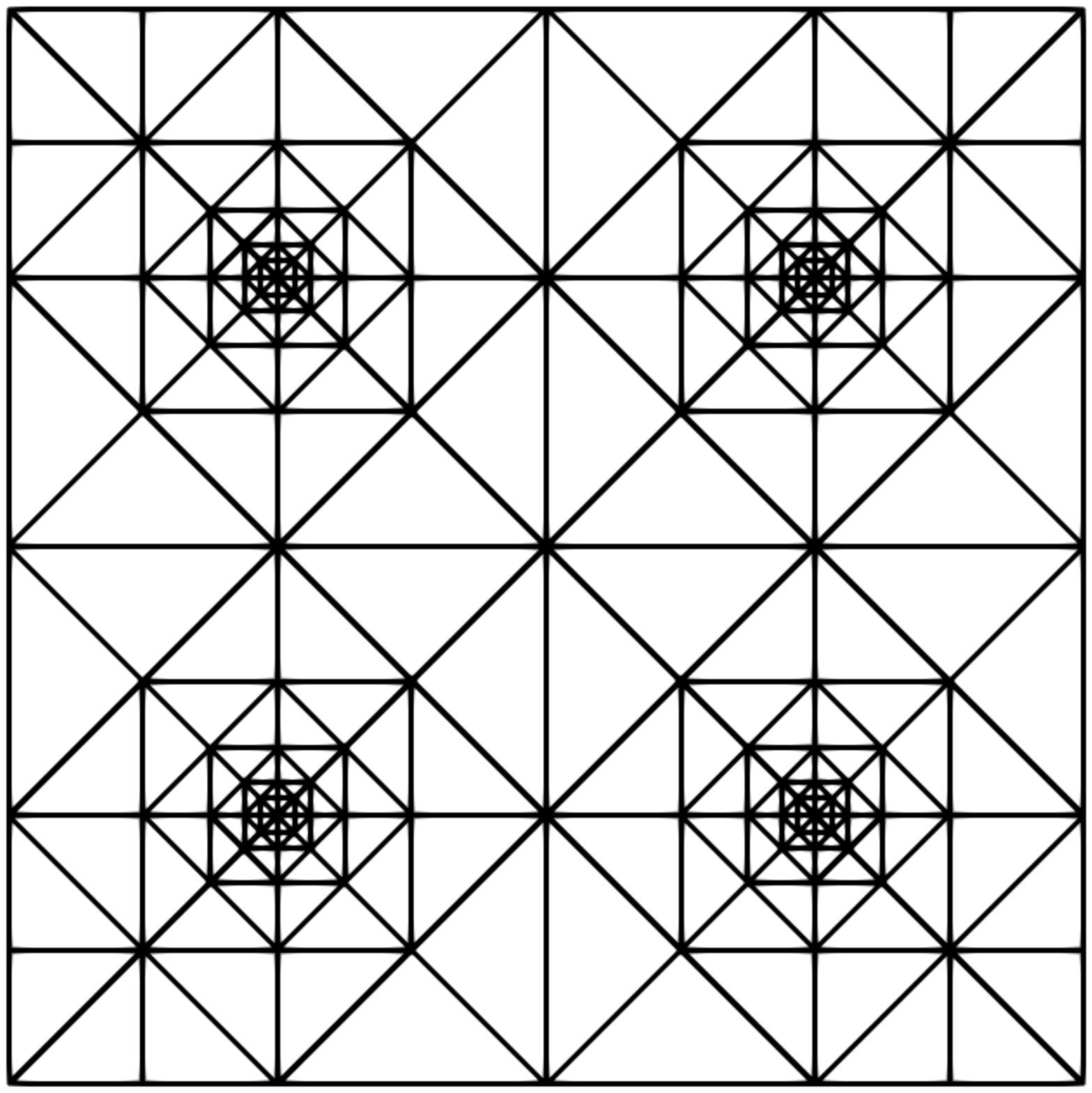} \\
\qquad \tiny{(B.3)}
\end{minipage}
\\~\\
\begin{minipage}[b]{0.327\textwidth}\centering
\includegraphics[width=3.7cm,height=3.5cm,scale=0.66]{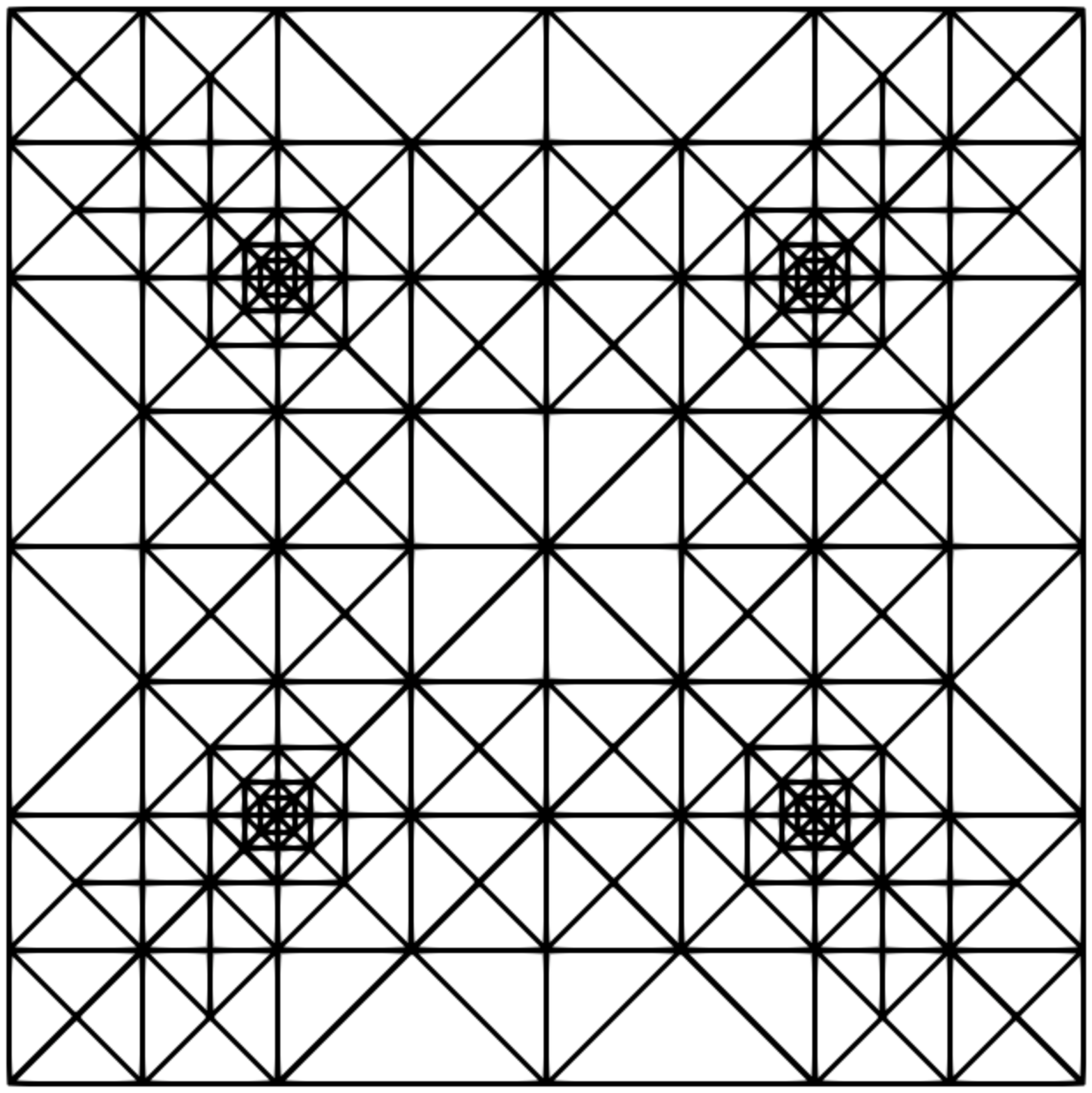} \\
\qquad \tiny{(B.4)}
\end{minipage}
\begin{minipage}[b]{0.327\textwidth}\centering 
\includegraphics[width=3.7cm,height=3.5cm,scale=0.66]{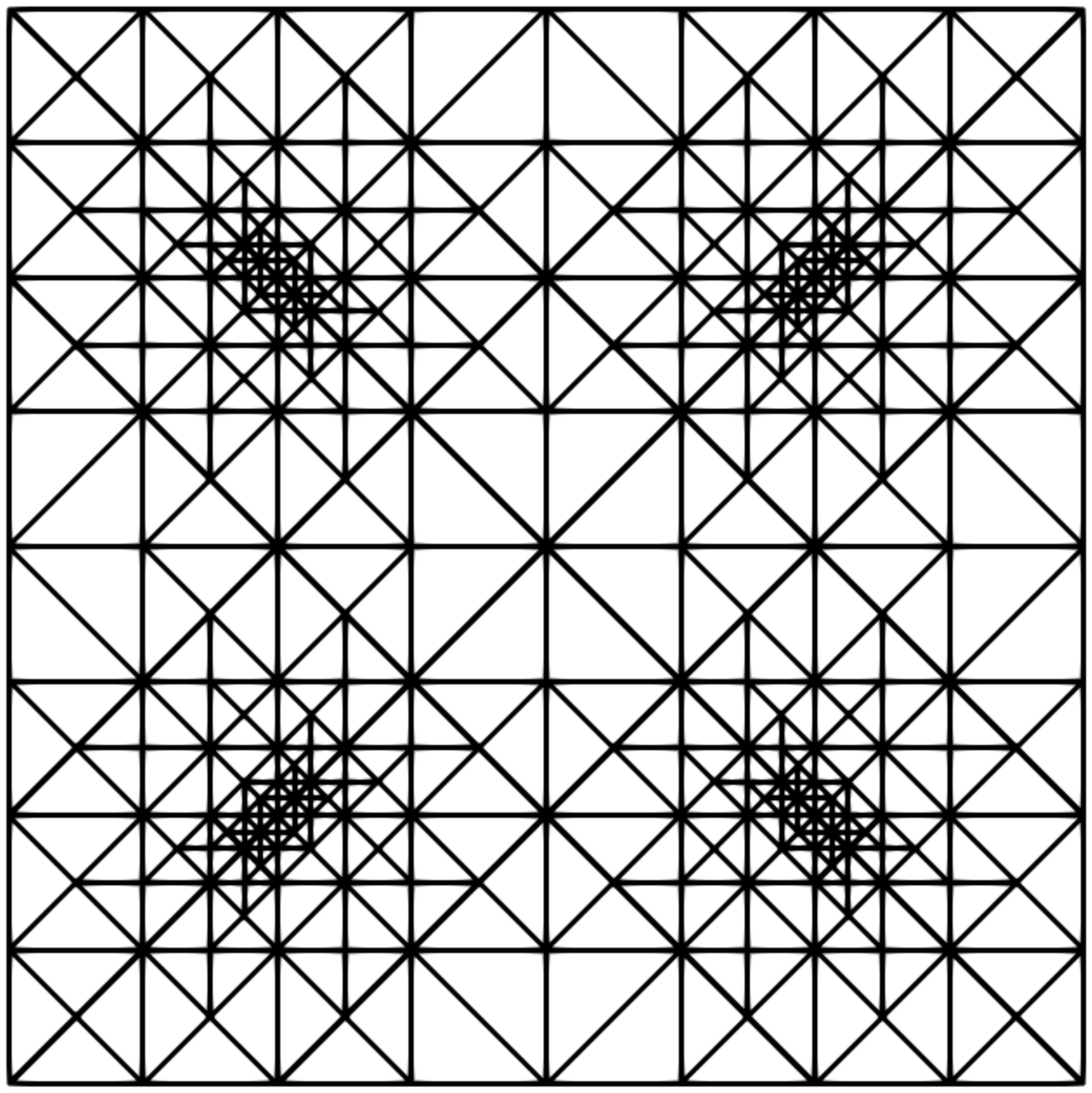} \\
\qquad \tiny{(B.5)}
\end{minipage}
\begin{minipage}[b]{0.327\textwidth}\centering  
\includegraphics[width=3.7cm,height=3.5cm,scale=0.66]{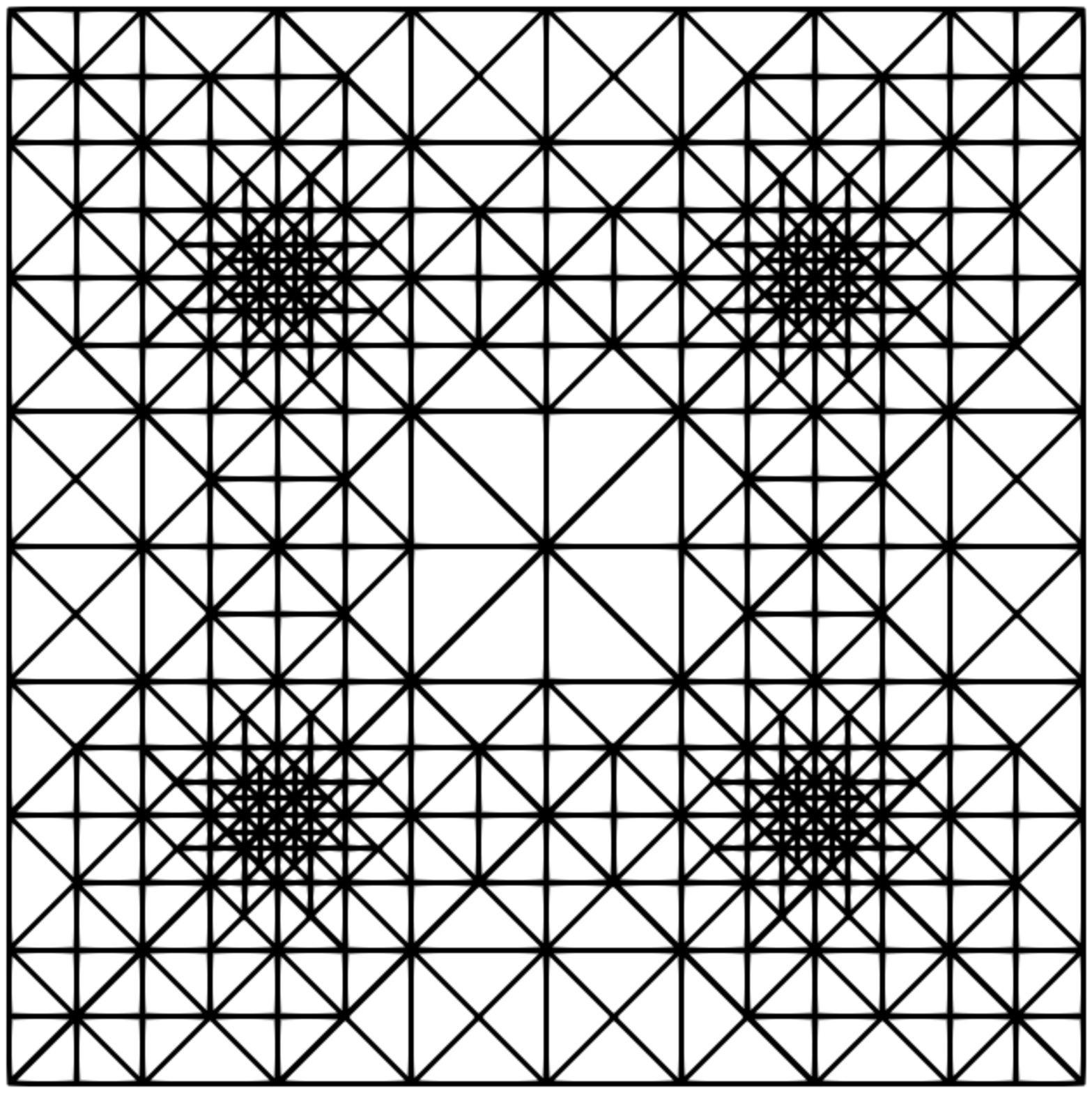} \\
\qquad \tiny{(B.6)}
\end{minipage}
\caption{Example 1: Experimental rates of convergence for the error estimator $\mathsf{E}_{\T}$ considering $p \in \{1.0,1.2, 1.4, 1.6, 1.8\}$ (B.1) and meshes obtained after 10 iterations of our adaptive loop for $p=1.8$ (B.2), $p=1.6$ (B.3), $p=1.4$ (B.4), $p=1.2$ (B.5), and $p=1.0$ (B.6).}
\label{fig:test_01}
\end{figure}

\subsection{Non-convex domain} 
Let $\Omega=(-1,1)^2 \setminus[0,1)\times (-1,0]$, $\kappa=1$, $\nu=1$, $\mathbf{f}_0(x_{1},x_{2})=(0,0)^{\intercal}$, $\mathbf{f}_1(s)=(10s,10s)^{\intercal}$, and $\mathcal{D} = \{(-0.25,0.5)\}$. In Figure \ref{fig:test_03}, we report the results obtained for Example 2. In spite of the involved geometric singularity, our estimator exhibits optimal experimental rates of convergence for $p\in\{1.0,1.6\}$. We also observe that refinement is concentrated around the singular source point and the \GC{reentrant corner.}

\begin{figure}[!ht]
\centering
\psfrag{Ndof(-1/2)}{\Large $\text{Ndof}^{-1/2}$}
\psfrag{Est p=1.0}{\Large $p=1.0$}
\psfrag{Est p=1.6}{\Large $p=1.6$}
\begin{minipage}[b]{0.327\textwidth}\centering
\scriptsize{\qquad Estimator $\mathsf{E}_{\T}$}
\includegraphics[trim={0 0 0 0},clip,width=3.0cm,height=3.3cm,scale=0.66]{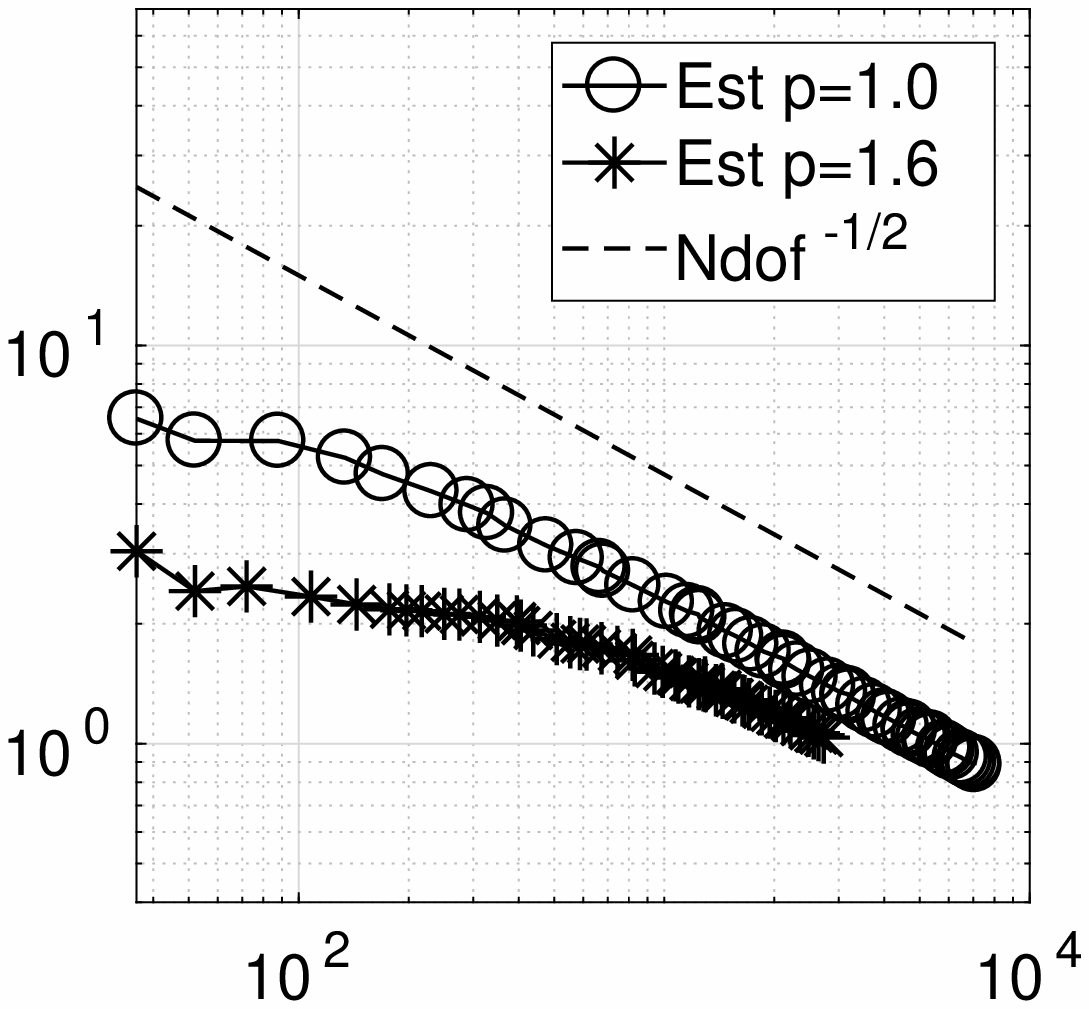} \\
\qquad \tiny{(C.1)}
\end{minipage}
\begin{minipage}[b]{0.327\textwidth}\centering
\includegraphics[width=3.6cm,height=3.6cm,scale=0.66]{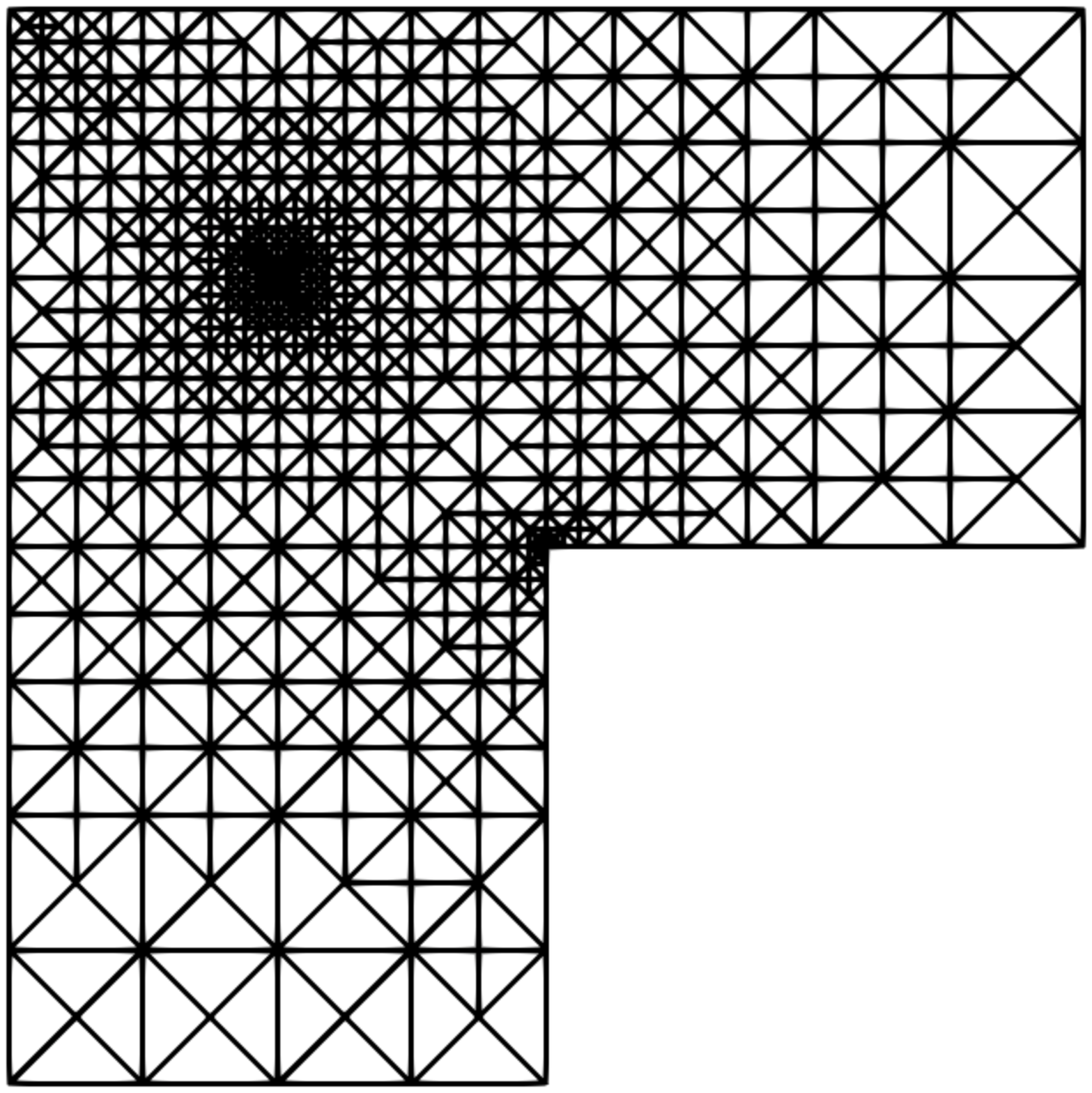} \\
\qquad \tiny{(C.2)}
\end{minipage}
\begin{minipage}[b]{0.327\textwidth}\centering
\includegraphics[width=3.6cm,height=3.6cm,scale=0.66]{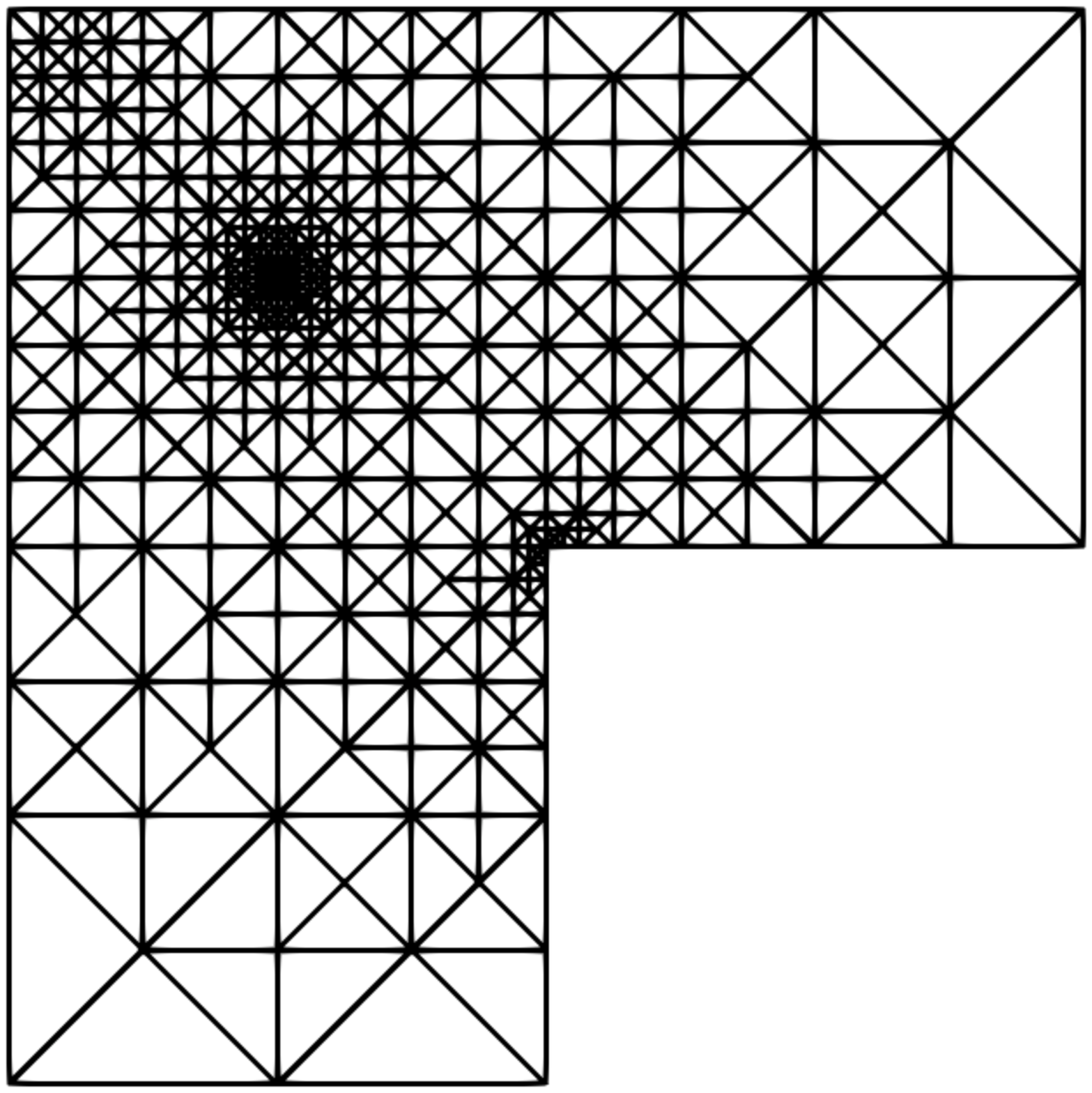} \\
\qquad \tiny{(C.3)}
\end{minipage}
\\~\\
\begin{minipage}[b]{0.327\textwidth}\centering
\includegraphics[width=3.6cm,height=3.6cm,scale=0.66]{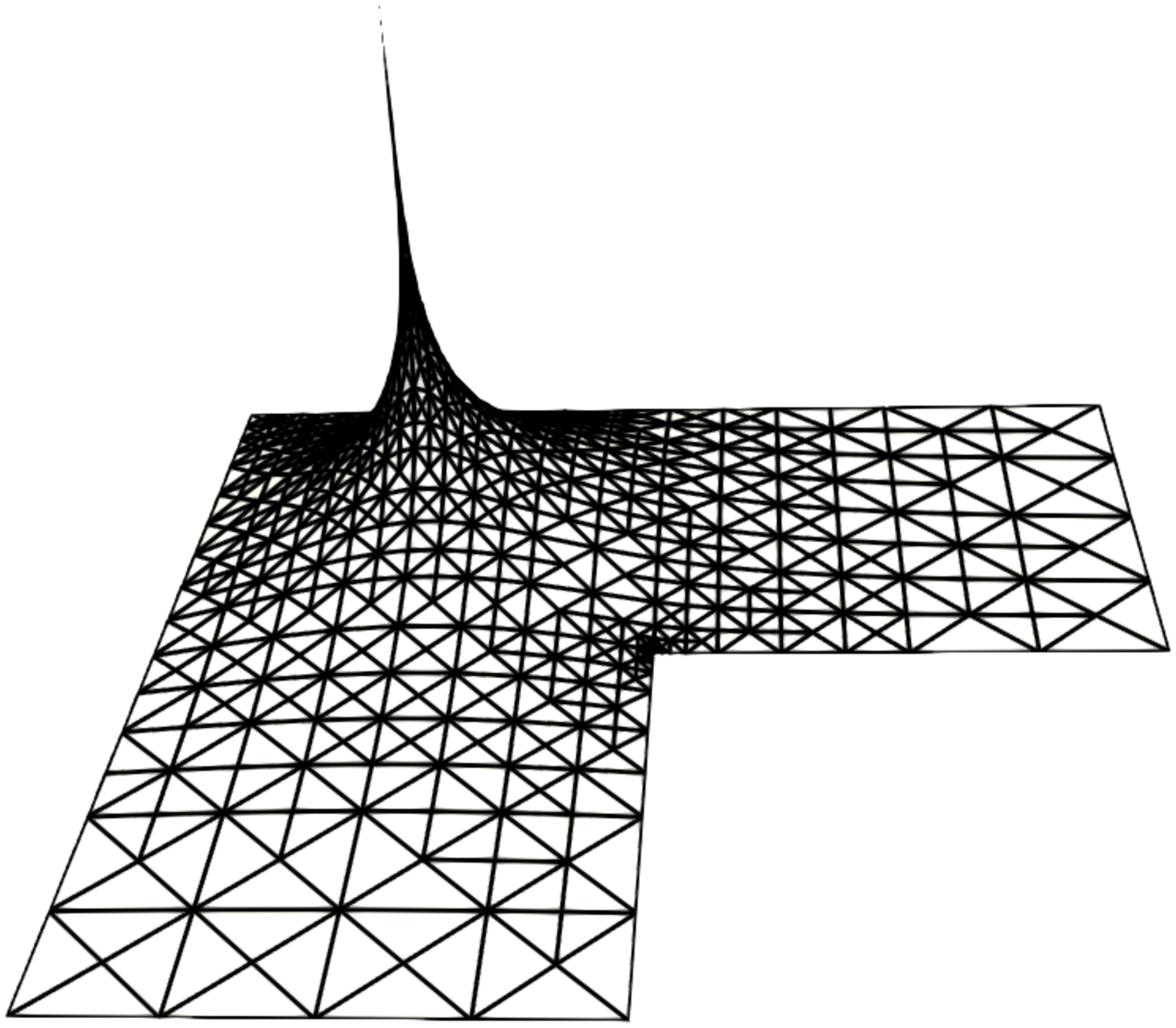} \\
\qquad \tiny{(C.4)}
\end{minipage}
\begin{minipage}[b]{0.327\textwidth}\centering 
\includegraphics[width=3.6cm,height=3.6cm,scale=0.66]{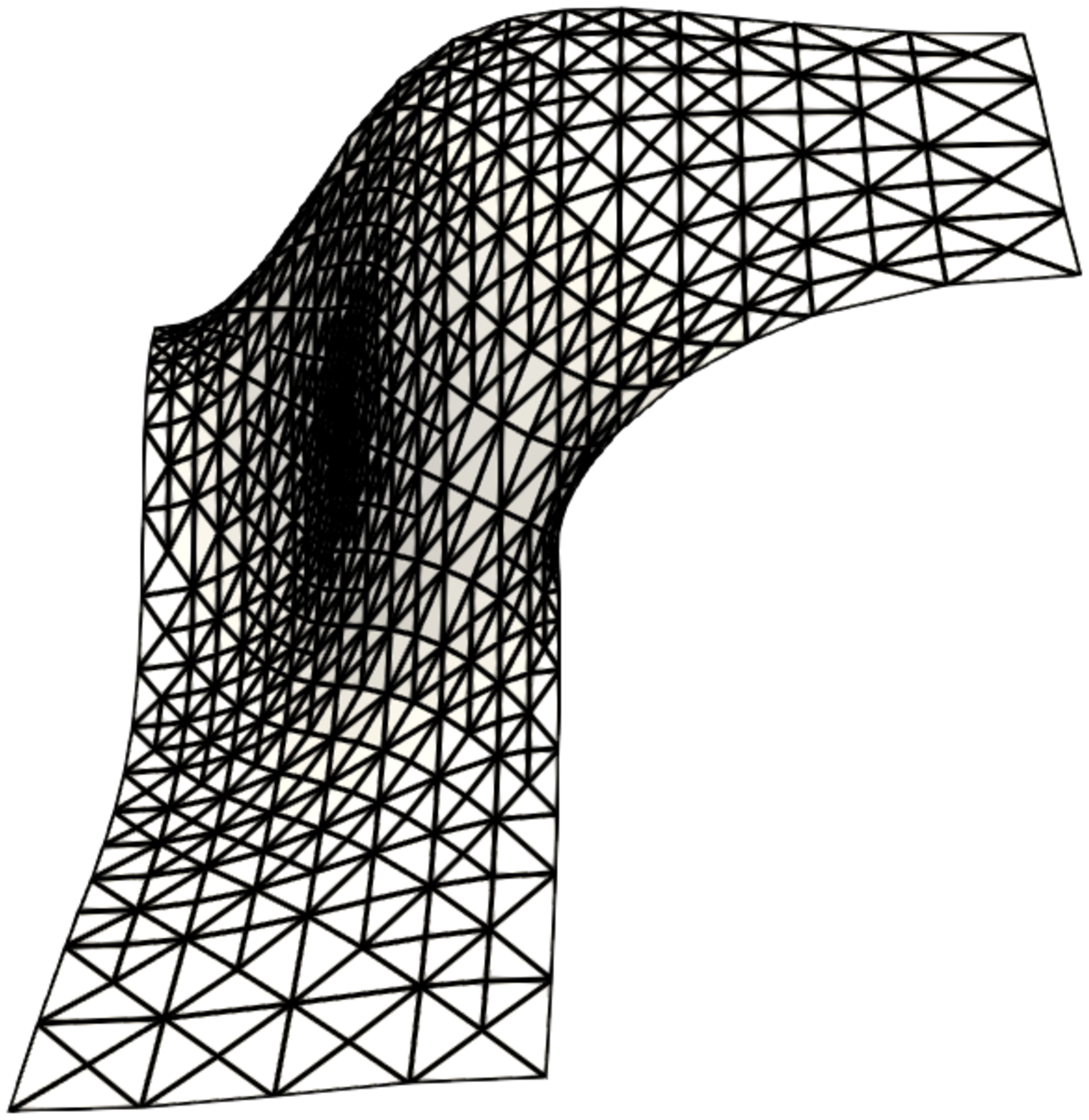} \\
\qquad \tiny{(C.5)}
\end{minipage}
\begin{minipage}[b]{0.327\textwidth}\centering  
\includegraphics[width=3.6cm,height=3.6cm,scale=0.66]{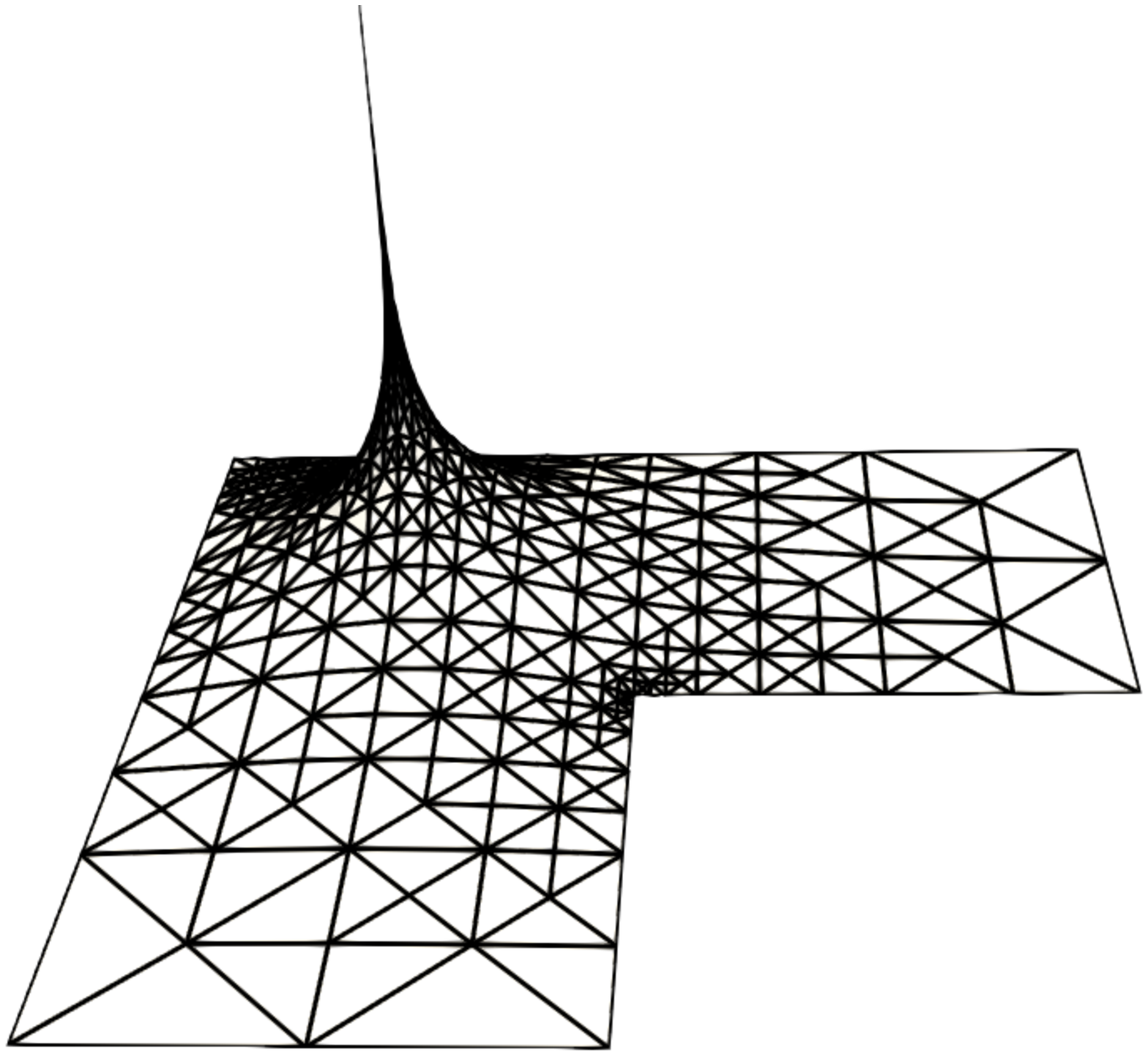} \\
\qquad \tiny{(C.6)}
\end{minipage}
\caption{Example 2: Experimental rates of convergence for the error estimator $\mathsf{E}_{\T}$ considering $p\in\{1.0,1.6\}$ (C.1), the mesh obtained after 30 iterations of our adaptive loop for $p=1.0$ (665 vertices and 1269 elements) (C.2), the mesh obtained after 50 iterations of our adaptive loop for $p=1.6$ (565 vertices and 1082 elements) (C.3), the discrete temperature field obtained within the setting of (C.2) for $p=1.0$ (C.4), the discrete pressure field obtained within the setting of (C.2) for $p=1.0$ (C.5), and the temperature field obtained within the setting of (C.3) for $p=1.6$ (C.6).}
\label{fig:test_03}
\end{figure}

\subsection{The five-spot problem}
Due to its practical importance, the quarter five-spot problem has served as a paradigm to validate \GC{the} stability and accuracy of numerical methods for fluids in porous media \cite{MR1925889}. In what follows, we address this problem within the following setting: Let $\Omega=(0,1)^2$, $\mathbf{f}_{0}:=\left(0,0\right)^{\intercal}$, $\mathbf{f}_{1}(s)=(s,s)^{\intercal}$, $\nu=1$, $\kappa=1$, and $\mathcal{D} = \{ (0.5,0.5)\}$. Instead of modeling injection and production of well by a non-zero source term in the mass conservation equation, we consider a non-homogeneous boundary condition $\mathbf{u}\cdot \mathbf{n}=1$ for all boundary edges that contain the source $(0,0)$ and $\mathbf{u}\cdot \mathbf{n}=-1$ for all boundary edges that contain the sink $(1,1)$.

In Figure \ref{fig:test_02} we report the results obtained for Example 3. We present elevations for the velocity field, pressure, and temperature, streamlines for the velocity field, and pressure and temperature contours on a suitable mesh. We observe an optimal experimental rate of convergence for $p=1.0$.  We also observe that the adaptive refinement is mostly concentrated around the singular source point $(0.5,0.5)$, the source $(0,0)$, and the sink $(1,1)$; see \cite[Section 4.2]{MR1925889} and \cite[Section 5.3]{MR2728970}.

\begin{figure}[!ht]
\centering
\psfrag{Ndof(-1/2)}{\Large $\text{Ndof}^{-1/2}$}
\psfrag{Est p=1.0}{\Large $p=1.0$}
\psfrag{pre}{\LARGE Pressure}
\psfrag{temperature}{\LARGE Temperature}


\begin{minipage}[b]{0.327\textwidth}\centering
\includegraphics[trim={0 0 0 0},clip,width=3.8cm,height=3.6cm,scale=0.66]{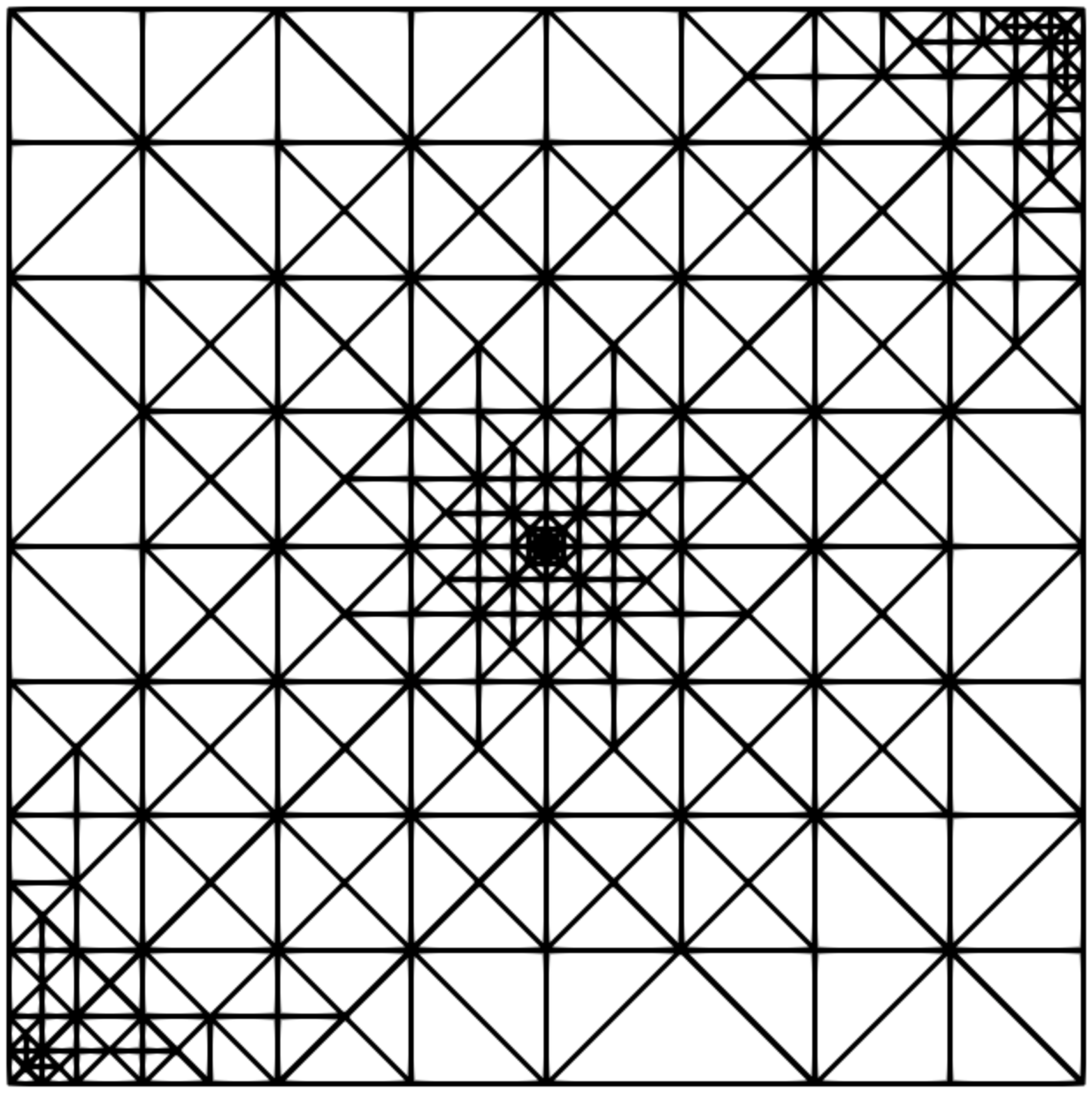} \\
\qquad \tiny{(D.1)}
\end{minipage}
\begin{minipage}[b]{0.327\textwidth}\centering
\includegraphics[width=3.8cm,height=3.6cm,scale=0.66]{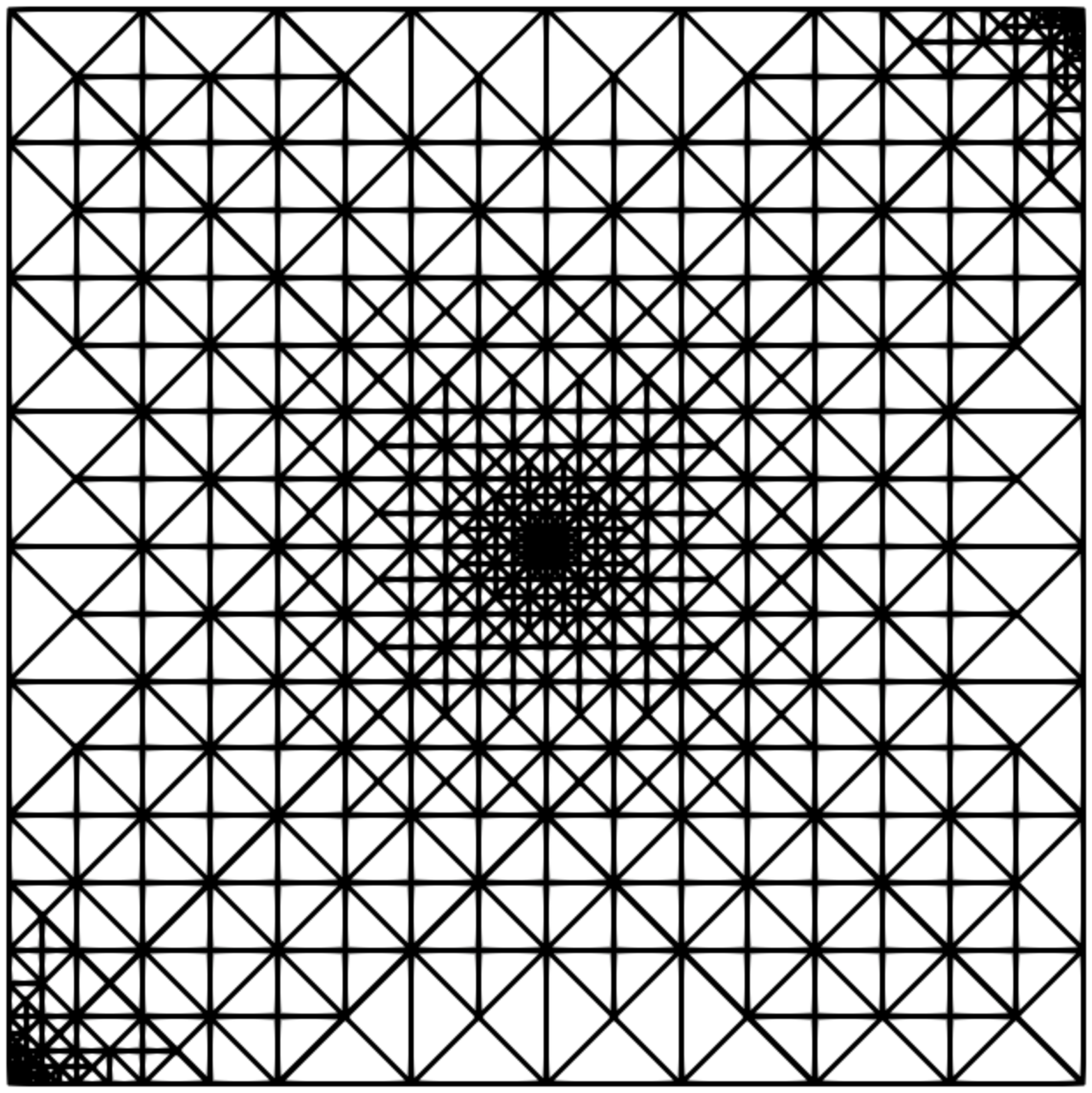} \\
\qquad \tiny{(D.2)}
\end{minipage}
\begin{minipage}[b]{0.327\textwidth}\centering
\scriptsize{\qquad Estimator $\mathsf{E}_K$}
\includegraphics[trim={0 0 0 0},clip,width=3.0cm,height=3.3cm,scale=0.66]{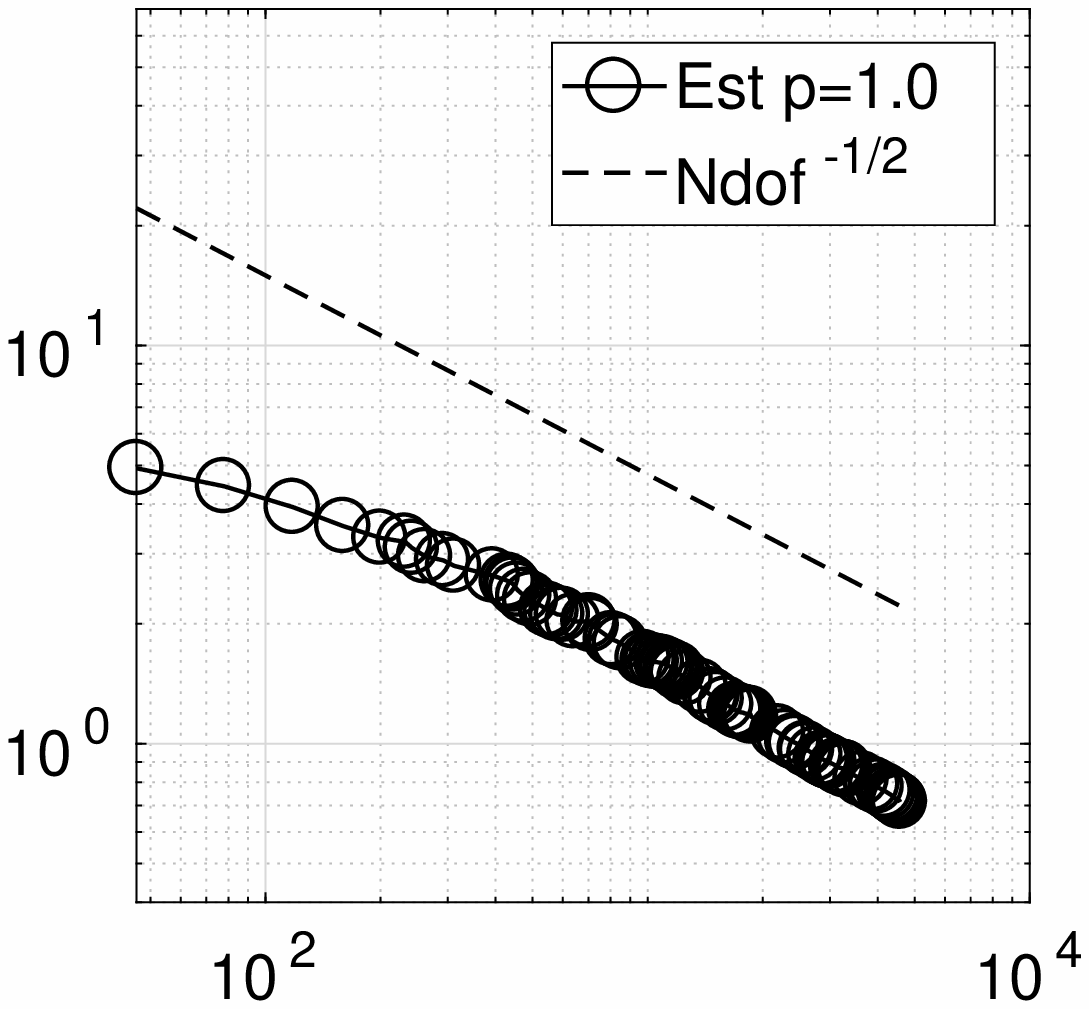} \\
\qquad \tiny{(D.3)}
\end{minipage}
\\~\\
\begin{minipage}[b]{0.327\textwidth}\centering
\includegraphics[width=3.6cm,height=3.6cm,scale=0.66]{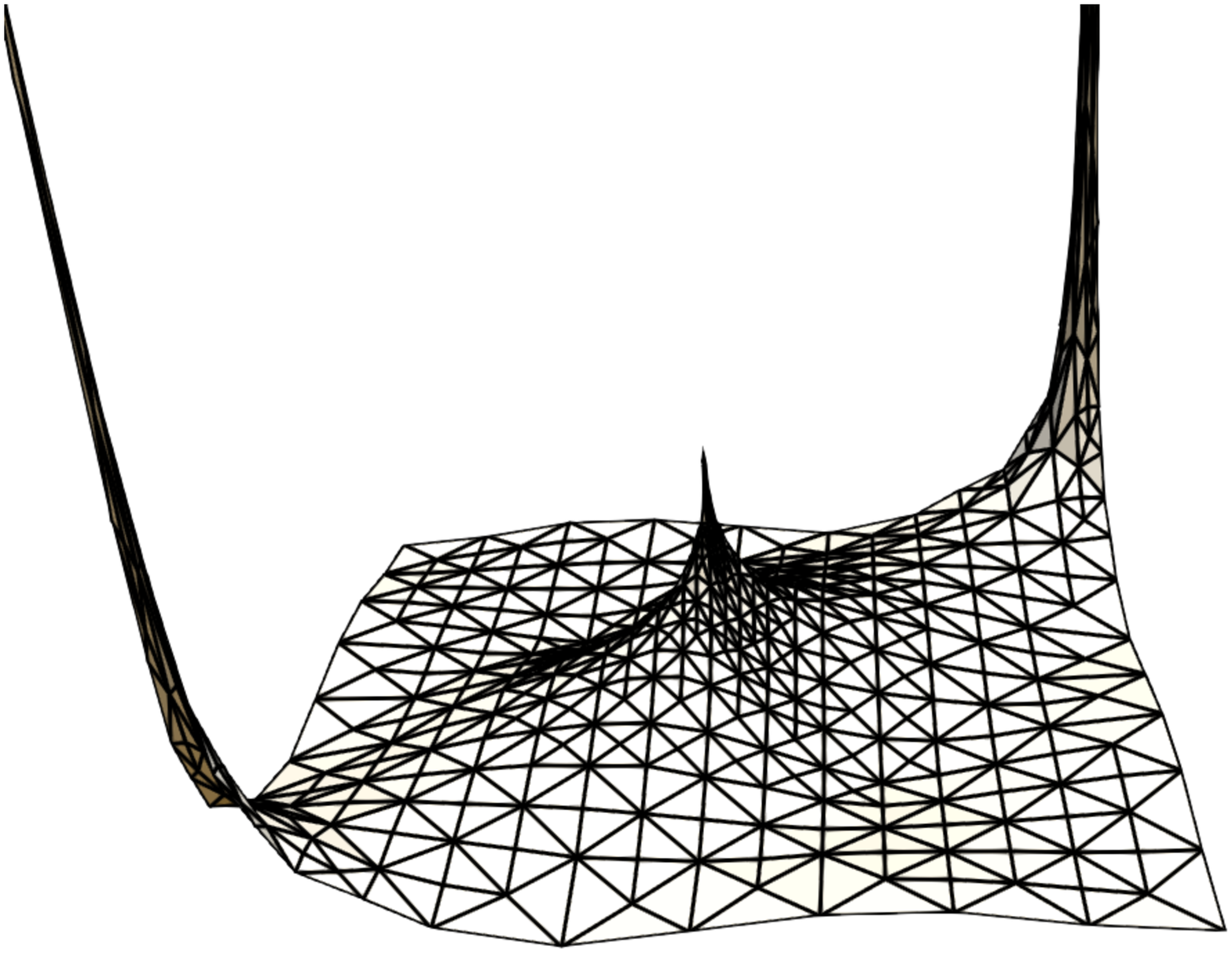} \\
\qquad \tiny{(D.4)}
\end{minipage}
\begin{minipage}[b]{0.327\textwidth}\centering 
\includegraphics[width=3.6cm,height=3.6cm,scale=0.66]{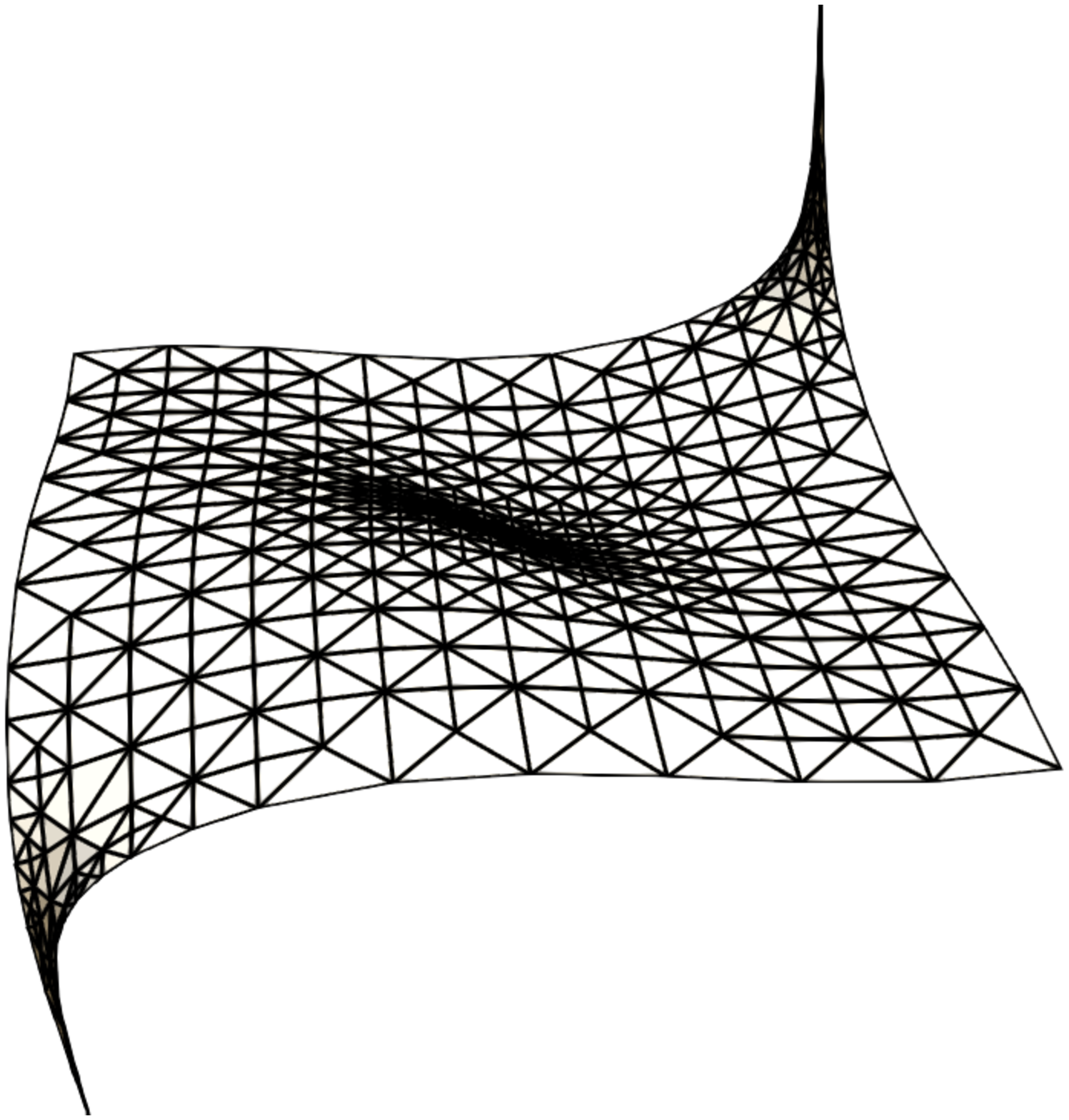} \\
\qquad \tiny{(D.5)}
\end{minipage}
\begin{minipage}[b]{0.327\textwidth}\centering  
\includegraphics[width=3.6cm,height=3.6cm,scale=0.66]{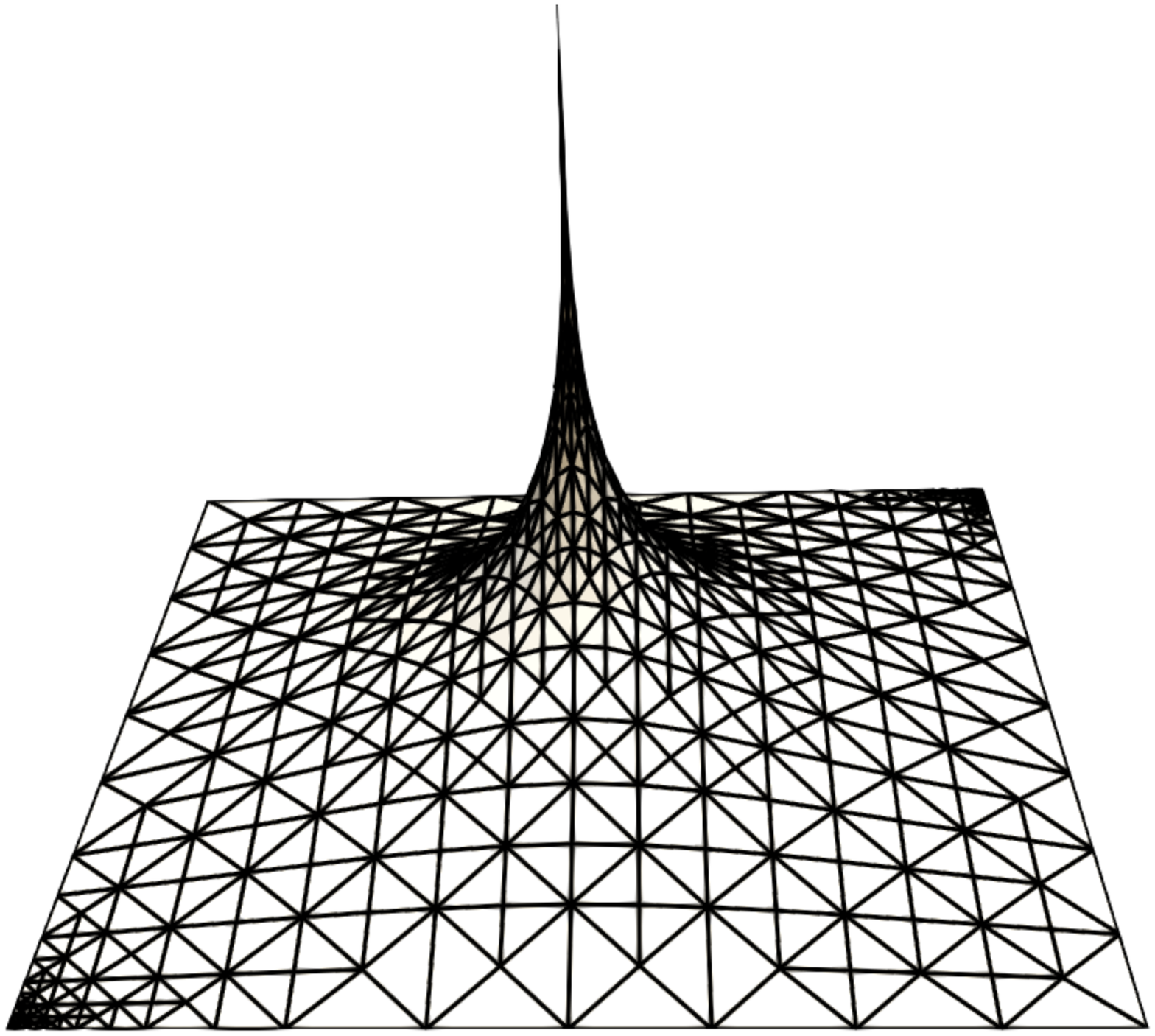} \\
\qquad \tiny{(D.6)}
\end{minipage}
\\~\\
\begin{minipage}[b]{0.327\textwidth}\centering
\includegraphics[width=3.6cm,height=3.6cm,scale=0.65]{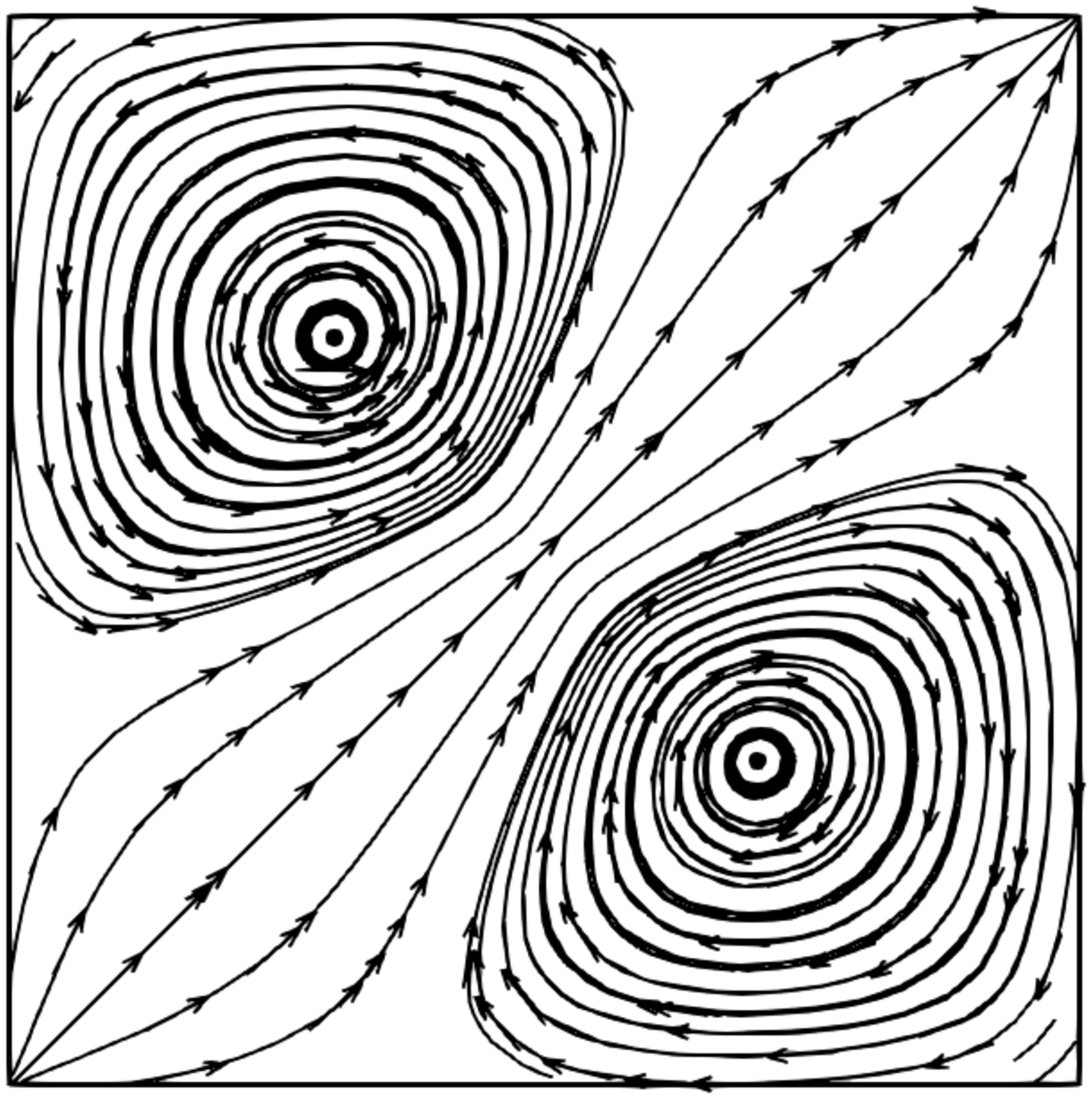} \\
\qquad \tiny{(D.7)}
\end{minipage}
\begin{minipage}[b]{0.327\textwidth}\centering 
\psfrag{Pressure}{\LARGE Pressure}
\includegraphics[width=3.6cm,height=3.85cm,scale=0.76,trim={1cm 2cm 2cm 0},clip]{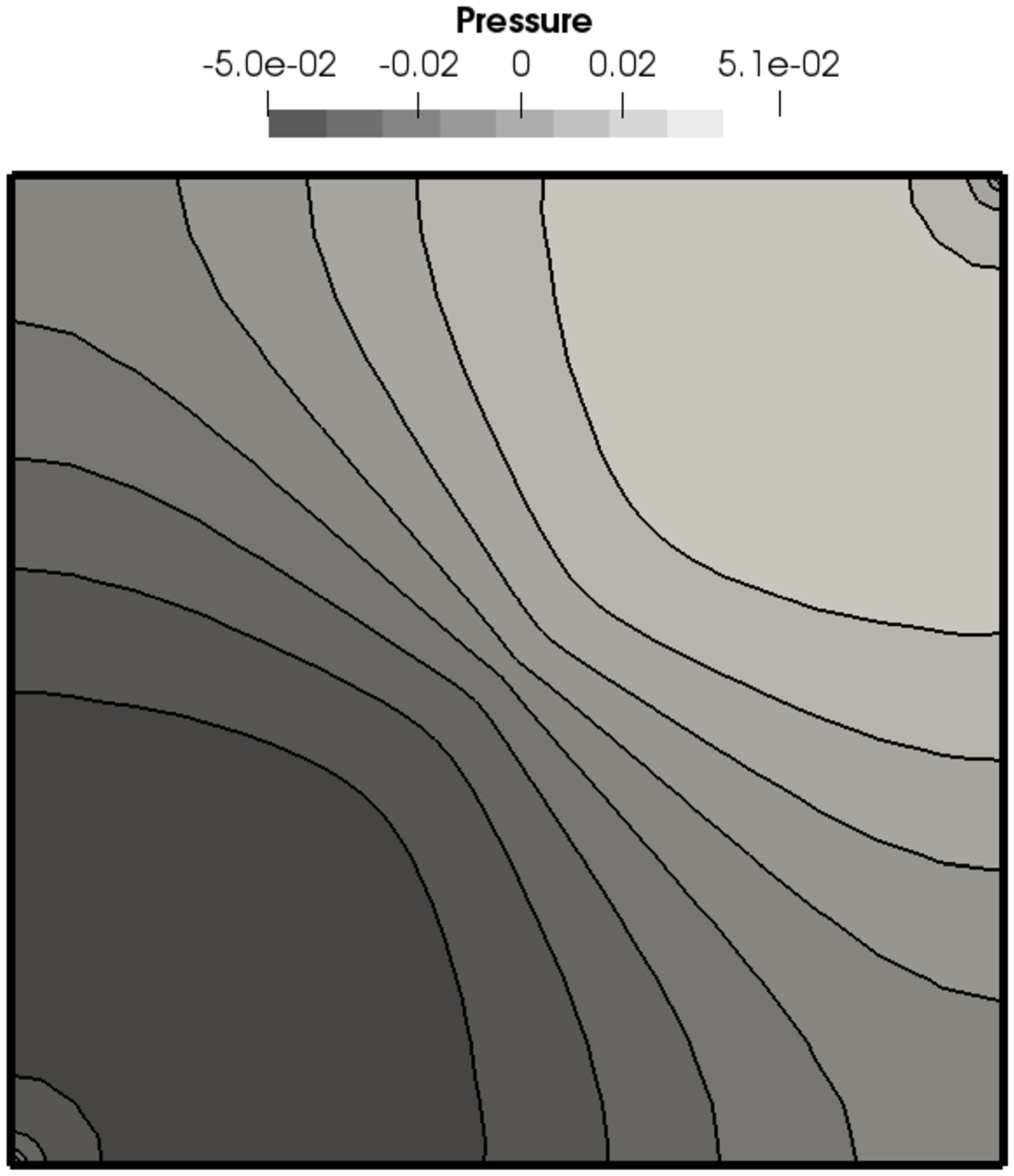} \\
\qquad \tiny{(D.8)}
\end{minipage}
\begin{minipage}[b]{0.327\textwidth}\centering  
\includegraphics[width=3.6cm,height=3.85cm,scale=0.76,trim={1cm 2cm 2cm 0}]{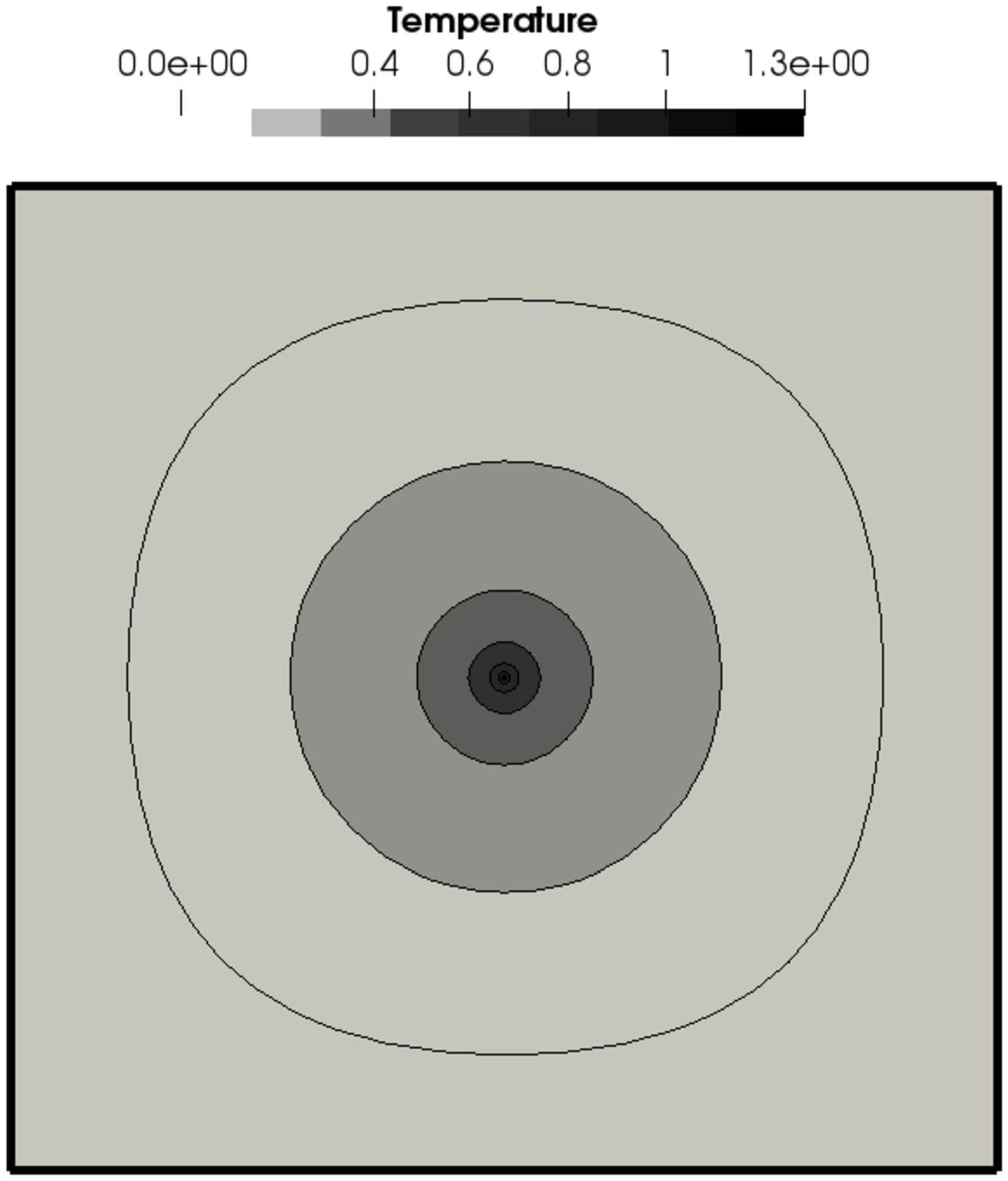} \\
\qquad \tiny{(D.9)}
\end{minipage}

\caption{Example 3: The mesh obtained after 30 iterations of our adaptive loop for $p=1.0$ (230 vertices and 414 elements) (D.1), the mesh obtained after 60 iterations for $p=1.0$ (1979 vertices and 3832 elements) (D.2), experimental rate of convergence for $\mathsf{E}_{\T}$ with $p=1.0$ (D.3), elevation plots for the velocity norm (D.4), pressure field (D.5), and temperature field (D.6), streamlines of the velocity field (D.7), and contour lines of the pressure field (D.8), and temperature field (D.9).} 
\label{fig:test_02}
\end{figure}

\bibliographystyle{siamplain}
\bibliography{biblio}
\end{document}